\numberwithin{equation}{section}
\DeclareMathOperator*{\argmin}{\ensuremath{arg\,min}}
\DeclareMathOperator*{\median}{\ensuremath{median}}
\DeclareMathOperator*{\sgn}{\ensuremath{Sgn}}
\DeclareMathOperator*{\vol}{\ensuremath{vol}}
\def\wbar{\accentset{{\cc@style\underline{\mskip8mu}}}}
\renewcommand{\vec}[1]{\mbox{\boldmath \small $#1$}}
\newcommand{\power}{\ensuremath{\mathcal{P}}}
\newcommand{\anti}{\ensuremath{\mathrm{anti}}}
\DeclareMathOperator*{\opt}{\ensuremath{opt}}
\DeclareMathOperator*{\tc}{\ensuremath{TC}}
\DeclareMathOperator*{\bc}{\ensuremath{BC}}
\newcommand{\R}{\ensuremath{\mathbb{R}}}
\newcommand{\gen}{\mathrm{genus}}
\newcommand{\norm}[1]{\Vert #1\Vert_\infty}
\newcommand{\abs}[1]{\vert #1\vert}
\newcommand{\set}[1]{\{ #1\}}
\renewcommand{\vec}[1]{\mbox{\boldmath$#1$}}
\newcommand{\normone}[1]{\Vert #1\Vert_1}
\def\sign{\operatorname{sign}}
\newcommand{\vK}{\vec{K}}
\newcommand{\vI}{\vec{I}}
\newcommand{\Span}{\ensuremath{\mathrm{span}}}
\theoremstyle{plain}
\newtheorem{theorem}{Theorem}
\newtheorem{definition}{Definition}
\newtheorem{lemma}{Lemma}
\newtheorem{remark}{Remark}
\newtheorem{cor}{Corollary}
\newtheorem{proposition}{Proposition}
\title[fundamental graph cut] {Equivalent spectral theory for fundamental graph cut problems}
\author{Sihong Shao}
\email{sihong@math.pku.edu.cn}
\address{CAPT, LMAM and School of Mathematical Sciences,  Peking University, Beijing 100871, China}
\author{Chuan Yang}
\email{chuanyang@fzu.edu.cn}
\address{School of Mathematics and Statistics, Fuzhou University, Fuzhou 350108, China,
and School of Mathematical Sciences,  Peking University, Beijing 100871, China}
\author{Dong Zhang}
\email{dongzhang@math.pku.edu.cn}
\address{School of Mathematical Sciences,  Peking University, Beijing 100871, China}
\author{Weixi Zhang}
\email{zjwzrazwx@gmail.com}
\address{School of Mathematical Sciences,  Peking University, Beijing 100871, China}
\begin{document}





\begin{abstract}
We introduce and develop  equivalent spectral graph theory for several fundamental graph cut problems including maxcut,   mincut, Cheeger cut, anti-Cheeger cut, dual Cheeger problem and their useful variants. A specified strategy for achieving an equivalent eigenproblem is proposed for a general graph cut problem  via  the set-pair Lov\'asz extension and the Dinkelbach scheme. For a class of 2-cut and 3-cut problems, we reveal the intrinsic difference-of-submodularity for the fractional formulations and show that their set-pair Lov\'asz extensions yield equivalent difference-of-convex structures. Building on the Dinkelbach scheme, we finally establish a unified research roadmap for nonlinear spectral theory that provides a one-to-one correspondence between certain eigenpairs and the optimal graph cut problems. The finer structure of the eigenvectors, the Courant nodal domain theorem and the graphic feature of eigenvalues are studied systematically in the setting of these new nonlinear eigenproblems.  

\vspace{0.2cm}

 \noindent\textbf{Keywords:}
Cheeger cut, maxcut, dual Cheeger constant, anti-Cheeger cut, {\color{black}Courant-type nodal domain theory}, $1$-Laplacian
 
\end{abstract}

\maketitle

\section{Introduction}

Many problems on graph cuts, such as Cheeger cut, maxcut,  and balanced minimum cut, are quite important and popular in both pure and applied mathematics. Most of those graph cut problems have rich  spectral properties, based on adjacency matrix or Laplacian matrix on  graphs, and turns to be the foundation of spectral graph theory. However, the linear spectral properties do not capture the graph cuts accurately. 
For example, sometimes the two nodal domains of the second  eigenvector of Laplacian matrix provide a cut that is very different from the Cheeger cut.  An instructive  example where the nodal domain approximation of the Cheeger cut fails is provided  by 
Guattery and Miller \cite{Guattery/Miller}.  

A recent line of research indicates that nonlinear spectral graph theory can solve this problem. 
As a natural discretization of the eigenvalue problem regarding the  1-Laplace operator, the graph 1-Laplacian has been introduced first by Hein and B\"uhler \cite{HeinBuhler2010} for the purpose of application to spectral clustering and then by Chang \cite{refC-JCT} for the aim of studying the relatively isoperimetric problem from the variational point of view. 
The Cheeger cut and dual Cheeger problem based on graph 1-Laplacian 
had been studied systematically in both theory and application \cite{refCSZ-JCM,refCSZ-Adv,refC-JCT}. 
The spectral theory of 1-Laplacian type operator provides tools to analyze Cheeger type constants on graphs, offering insights into the combinatorial optimization through analytic methods. 
It would be interesting and useful to implement and accomplish  similar ideas in other important combinatorial optimizations. 

Once a combinatorial optimization problem is given, several  ``{soficity}'' questions arise: 
\vspace{0.2cm}

{\it Is there an equivalent eigenproblem for such combinatorial optimization problem?} 

{\it What is the spectral property of such equivalent eigenproblem?} 

{\it What are the advantages over linear spectral theory?}
\vspace{0.2cm}

In fact,  all the above questions are still open 
on most of graph optimization problems. 
 Even some of fundamental graph cut problems, including maxcut, dual Cheeger cut, anti Cheeger cut, 
and their modified versions, are not well understood from the spectral perspective. To remedy this, we shall provide equivalent spectral theory for these graph cut problems, and we would focus on the min-max eigenvalues, which closely relate to the higher order versions of these fundamental graph cut problems. We will compare  these spectral theories in detail, and highlight some different but useful results. 
Among a plenty of graph cut problems with some prefixed properties, a handful of 2-cut problems should be  basic, for instance, 
\begin{enumerate}
\item {a cut with minimum number of edges (i.e., the mincut problem);}
\item a ``SMALL" cut that separates the graph into two ``BIG" subgraphs (i.e., the Cheeger cut problem)
;
\item a cut with maximum number of edges (i.e., the maxcut problem);
\item a ``BIG" cut that separates the graph into two ``SMALL" subgraphs (i.e., the anti-Cheeger  problem)
.
\end{enumerate}
\begin{figure}
    \centering
    \includegraphics[width=0.5\linewidth]{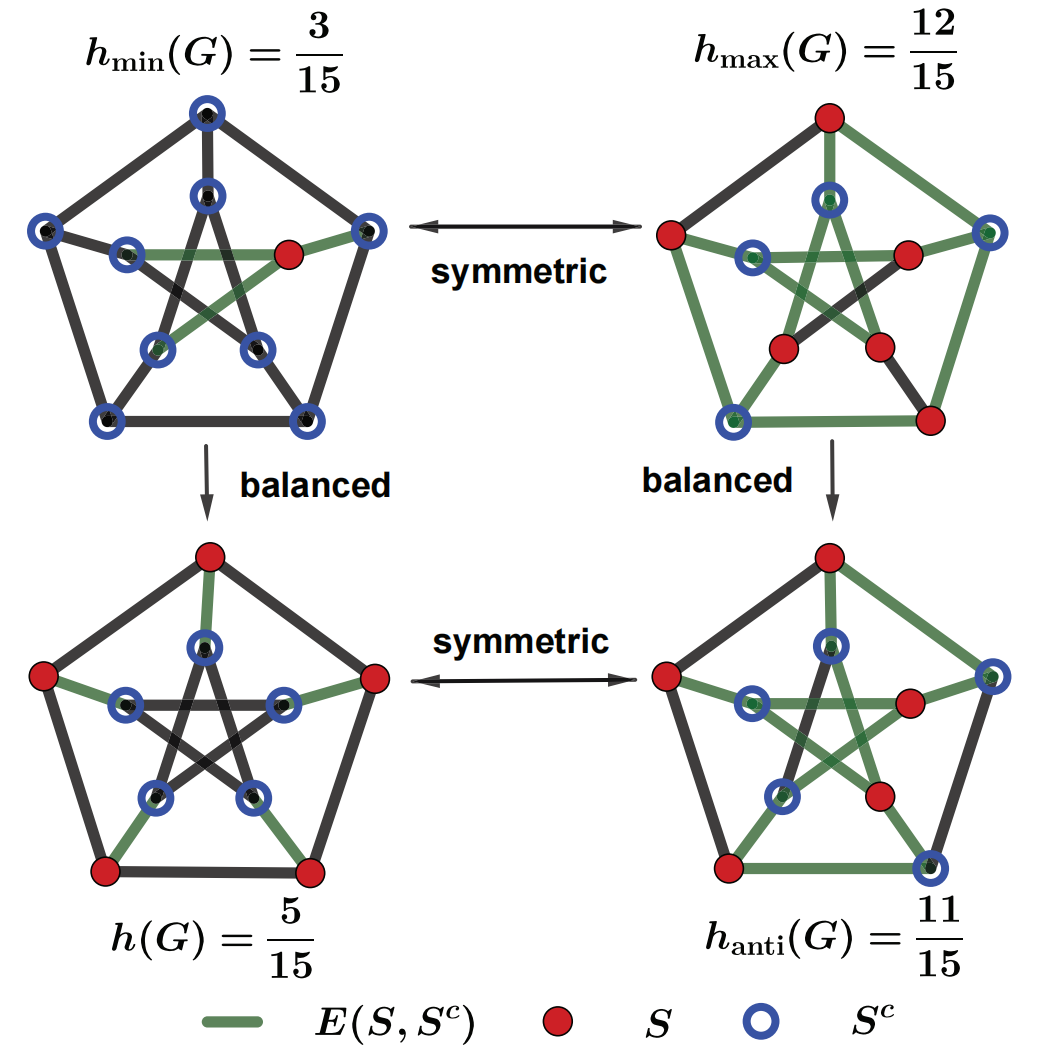}
    \caption{A bipartition
 $(S,S^c)$ of the vertex set $V$ is displayed in red bullets ($S$) and blue circles ($S^c$).    For the Cheeger cut or the anti-Cheeger cut, the remaining 
 uncut edges (in black)  are divided equally in  $S$ and $S^c$ (balanced and judicious!). For  the mincut or the maxcut,  the remaining uncut edges are all contained in one part (biased!). }
    \label{fig:f4-cut}
\end{figure}
By the description  in Fig.~\ref{fig:f4-cut}, the Cheeger cut and the anti-Cheeger cut can be regarded as  ``balanced'' versions of the mincut and the maxcut, respectively. 
Since the standard mincut problem can be solved in polynomial time, we won't pay much attention to it, and instead we will systematically study another balanced variant of the maxcut problem (called the  dual Cheeger problem) in this paper. 
Before stating the related spectral graph theory, let us recall the fundamental graph cut problems that 
 will be systematically studied in this paper:
\begin{itemize}
\item[] \textbf{Cheeger cut}. The standard \emph{Cheeger constant} of an undirected graph $G=(V,E)$ is defined as 
	\begin{equation}
\label{prob:Cheeger}	h(G)=\min\limits_{S\subseteq V,S\not\in\{\emptyset,V\}} \frac{|E( S,S^c)|}{\min\{\vol(S),\vol(S^c)\}}
	\end{equation}
where $V=\{1,\cdots,n\}$ is the vertex set, $\vol(S)$ is the sum of vertex weights of vertices in $S$, and $|E(S,S^c)|$ counts the total edge weights of edges acrossing $S$ and $S^c$. While any solution $S$ of the combinatorial minimization problem \eqref{prob:Cheeger} gives a \emph{Cheeger cut} $(S,S^c)$.
\item[]\textbf{Maxcut}. The very famous \emph{maxcut problem} is 
 a combinatorial optimization defined by 
\begin{equation}\label{prob:maxcut}
h_{\max}(G)= \max\limits_{S\subseteq V,S\not\in\{\emptyset,V\}}\frac{2|E(S,S^c)| }{\vol(V)}
\end{equation}
and a cut $(S,S^c)$ realizing \eqref{prob:maxcut} is called a maxcut. We note here that the {\sl mincut problem} is simply defined by using $\min$ instead of $\max$ in \eqref{prob:maxcut}.

\item[]\textbf{Dual Cheeger problem}.  
The \emph{dual Cheeger constant} \cite{Trevisan,Bauer-Jost} is simply defined by 
 \begin{equation}
h^+(G)=\max\limits_{V_1\cap V_2=\varnothing,V_1\cup V_2\ne \varnothing} \frac{2|E(V_1,V_2)|}{\vol(V_1\cup V_2)},\label{eq:dual-Cheeger-constant}	    
\end{equation}
A useful variant of the dual Cheeger problem \eqref{eq:dual-Cheeger-constant} is first presented in \cite{Soto} and then in \cite{SYZ},
\begin{equation}
\widehat{h}^+(G)=\max\limits_{V_1\cap V_2=\varnothing,V_1\cup V_2\ne \varnothing} \frac{2|E(V_1,V_2)|+|E\left(V_1\cup V_2,(V_1\cup V_2)^c\right)|}{\vol(V_1\cup V_2)+|E\left(V_1\cup V_2,(V_1\cup V_2)^c\right)|}.\label{eq:modified-dual-Cheeger-constant}     
 \end{equation}  

\item[]
\textbf{Anti-Cheeger cut}. As a corresponding judicious version of the maxcut problem,  the
 \emph{anti-Cheeger cut} 
\begin{equation} \label{eq:antiCheeger}
h_{\anti}(G)=\max_{S\subseteq V,S\not\in\{\emptyset,V\}}\frac{|E( S,S^c)|}{\max\{\vol(S),\vol(S^c)\}}.
\end{equation}
has more balanced structure \cite{Xu16}, 
which may fix
 the biasness of the maxcut problem \cite{SY21}. 
\end{itemize}

Fig.~\ref{fig:f4-cut} shows an example for these fundamental graph cut problems on the Petersen graph. All these graph cut problems listed  above are NP-hard \cite{Karp}, and thus it would be good to investigate fast spectral methods for these problems. 

Theories of spectra on graphs are arguably one of the most important directions in graph theory in the last decade. They link graph theory to spectral theory, and through this connection introduce new tools to graph theory. 
Let's start with 
the spectrum of graph Laplacian, which provides good lower and upper bounds for the Cheeger constant. For instance, let $\lambda_k(\Delta_2)$ denote the $k$-th  smallest eigenvalue of the normalized graph Laplacian $\Delta_2$, where $k=1,2,\cdots,n$. Then there holds the famous Cheeger inequality \cite{Alon86,Alon-Milman,Dodziuk84,Fiedler} 
\begin{equation}\label{eq:Cheeger-inequality}
\frac{h^2(G)}{2}\le \lambda_2(\Delta_2)\le 2h(G).
\end{equation}
We remark that the proof of the left-hand-side inequality (Cheeger's part) above is far from obvious, while the proof of
 the right-hand-side inequality (Buser's part)  is surprisingly   simple. 
Whilst the above Cheeger inequality gives a two-sided sharp bound for the Cheeger constant by the easily computable second Laplacian eigenvalue, neither the eigenvalue nor the eigenvector solve the Cheeger cut problem on general graphs. 

With regard to  different extensions of the Cheeger inequality, the dual version
\begin{equation}\label{eq:dual-Cheeger-inequality}
\frac{\big(1-h^+(G)\big)^2}{2}\le 2-\lambda_n(\Delta_2)\le 2\big(1-h^+(G)\big)
\end{equation}
established by Trevisan \cite{Trevisan}, and independently, by Bauer and Jost \cite{Bauer-Jost}, 
seems quite fundamental. 

Regarding the Laplacian spectrum bound for maxcut, Delorme and Poljak \cite{DelormePoljak93} give an inequality 
$$ h_{\max}(G)\le \frac n4 \lambda_n(L+U) $$
where $\lambda_n(L+U)$ is the largest eigenvalue of the matrix $L+U$, $L$ is the \emph{unnormalized} Laplacian of $G$ and $U$ is any given diagonal matrix with zero trace. It is easy to slightly modify Delorme-Poljak's inequality  by using the normalized Laplacian $\Delta_2$:  
$$ h_{\max}(G)\le \frac{\vol(V)}{4} \lambda_n(\Delta_2+D^{-1}U) $$
where $D$ is the diagonal matrix of degrees. In particular, $h_{\max}(G)\le  \frac14\vol(V) \lambda_n(\Delta_2)$.

These results from spectral graph theory have proven to be extremely useful to derive eigenvalue bounds for certain graph cut problems and  graph parameters. 
However, the classical spectral graph theory based on graph matrices (e.g., adjacency matrix and Laplace matrix) never provide the ``correct"\footnote{We shall give a footnote here to explain what   ``correct" means in our context. Linear spectral theory is not ``correct" in that the eigenvalues do not exactly represent the cut value,  the eigenvectors need rounding to get back to the original problem,  and the method of rounding the eigenvectors is not easy, and moreover, the resulting cut does not necessarily still have the original inequality relation. In contrast,  our equivalent nonlinear  spectral theory is ``correct" in that the eigenvectors are completely equivalent to the solution of the graph cut problem, and thus do not require  any rounding. }  information of cuts in general sense. 
Fortunately, the $\ell^1$-type  nonlinear spectral graph theory partially solves the problem, in which a fundamental fact is: 

\vspace{0.2cm}

{\it All the above graph cut problems can be equivalently transformed to certain eigenproblems of 1-Laplacian type.} 

\vspace{0.2cm}

We shall explain the above slogan in detail. 
It has been proved that  the Cheeger constant $h(G)$ 
equals the second eigenvalue of the set-valued eigenproblem
\textcolor{black}{\begin{equation}\label{eq:1-lap}
\vec 0\in\Delta_1\vec x-\mu \vec d\circ\sgn(\vec x)    \end{equation}} 
which is called `1-Laplacian' eigenequation in \cite{HeinBuhler2010,refC-JCT}. And the Cheeger cut is equivalent to the second eigenvector of \eqref{eq:1-lap}, which will be discussed in Sections  \ref{sec:Cheeger}. Thus, if we work on 1-Laplacian instead of the normalized Laplacian, the Cheeger inequality \eqref{eq:Cheeger-inequality} becomes an equality, and the disadvantage that the Laplacian eigenvectors do not provide Cheeger cut automatically disappears.

Similar situation  occurs  for 
the dual Cheeger problem. 
We prove that the solution of the dual Cheeger problem is equivalent to 
the first eigenpair of the so-called `signless 1-Laplacian' eigenequation  \cite{SYZ}:
\textcolor{black}{
\begin{equation}\label{eq:signless-1}
\vec 0\in\Delta_1^+\vec x-\mu\vec d\circ\sgn(\vec x).   \end{equation}} 
We continue the work in \cite{SYZ} to provide  estimates for higher order eigenvalues. For example, we establish a higher order  Cheeger inequality, and provide a  relation between the Laplacian spectrum and the signless 1-Laplacian eigenvalue \eqref{eq:signless-1} (see Theorems \ref{thm:k-way-dual-Cheeger} and \ref{thm:2Lap-s1Lap}). 
A remarkable result we would like to highlight is that the multiplicity of the largest eigenvalue of $\Delta_1^+$ is at least the independence number $\alpha$, and at most the edge covering number (see Theorem 
\ref{thm:multi-of-1}), which has no analogous on the traditional linear spectrum. In particular, if the graph is bipartite, then the eigenvalue multiplicity equals $\alpha$. 
Another noteworthy result concerns the case of equalities on higher order Cheeger inequality:  If the graph is a tree, then the $k$-way dual Cheeger constant plus  the $k$-th min-max eigenvalue of $\Delta_1^+$ equals 1,  which has no analogous result in linear spectral theory.  
This equality is a consequence of the sharp nodal count theory established in \cite{DPT23}. 
We also point out that since the property of $\Delta_1^+$ is quite different from $\Delta_1$, making the Courant-type nodal domain theorem no longer valid for $\Delta_1^+$, we employ an alternative natural modification of nodal domains to ensure the validity of Courant's theorem. 

One major contribution of this work is the comprehensive study of the equivalent eigenproblem for maxcut
\textcolor{black}{
\begin{equation}\label{eq:1-lap-maxcut}
\vec 0\in\Delta_1\vec x-\mu \vol(V) \partial\|\vec x\|_\infty    \end{equation}} 
in which any eigenvector corresponding to the largest eigenvalue of the eigenproblem \eqref{eq:1-lap-maxcut} directly produces a  cut of maximum size (i.e., a solution of the maxcut problem). 
We would like to highlight that by introducing new versions of nodal domain, we actually  strengthen the  Courant-type nodal domain theorem on \eqref{eq:1-lap-maxcut}, see Section \ref{sec:maxcut}.  

Another main contribution of this work is a new equivalent spectral theory for the Cheeger constant, 
which is closely related to, but not the same as, the graph 1-Laplacian. 
We prove that the eigenvalues of the 1-Laplacian eigenproblem \eqref{eq:1-lap} are always the eigenvalues of the  eigenproblem
\textcolor{black}{ 
\begin{equation}\label{eq:new-Cheeger-equi}
\vec 0\in \vol(V)\partial\|\vec x\|_\infty-\Delta_1^+\vec x-\mu\partial N(\vec x)
\end{equation}
}
and the second eigenvalue of \eqref{eq:new-Cheeger-equi} equals the Cheeger constant, see Section \ref{sec:new-Cheeger}. Furthermore, the nonzeros of the second eigenvector of \eqref{eq:new-Cheeger-equi} provide a vertex cover, which does not hold for the graph  1-Laplacian \eqref{eq:1-lap}. 

Similar to the investigation of  equivalent eigenproblems for maxcut, we also establish a spectral theory for anti-Cheeger problem in  Section \ref{sec:anti-Cheeger}. 
Formally, we study the property of  eigenpairs for the eigenproblem 
\textcolor{black}{
\begin{equation}\label{eq:anti-spec}
\vec 0\in 
\Delta_1\vec x -2\mu\vol(V)\partial\|\vec x\|_\infty+\mu\partial N(\vec x).
\end{equation}}



The main framework behind these equivalent eigenproblems of fundamental graph cuts is a unified research roadmap that first express 2-cut and 3-cut problems in discrete fractional difference-of-submodular formulations, and subsequently transform them into an equivalent continuous fractional difference-of-convex optimization problem via the set-pair Lov\'asz extension, and then construct the nonlinear eigenproblem by extracting the subproblem of the Dinkelbach 2-step iterative scheme, which preserves the same global optimum. 
Finally, we establish that the corresponding eigenpairs coincide with the optimal graph cuts in certain cases.


The paper is organized as follows. In the rest of this section, we collects notations used in the following sections.  
Section~\ref{sec:new-perspective} establishes the theoretical foundations of our main framework. Section~\ref{sec:Cheeger} states our main results on two equivalent eigenproblems for the Cheeger cut. Section~\ref{sec:dual-Cheeger} studies the equivalent signless 1-Laplacian spectral theory for dual Cheeger problems and their variants. Section~\ref{sec:maxcut} presents the first equivalent spectral theory for maxcut. Section~\ref{sec:anti-Cheeger}  proposes a detailed equivalent eigenproblem for the anti-Cheeger cut.


\subsection{Notations}
\label{sec:notation}
We list some notations and settings adopted in the following sections.
\begin{itemize}
	\item For the vertex set $V=\{1,2,\ldots,n\}$ of the given graph $G=(V,E)$, let
	\begin{align}
		\mathcal{P}(V)&=\{A:A\subset V\},\\
		\bc(V)&=\mathcal{P}(V)\backslash\{\emptyset,V\},\\
		\mathcal{P}_2(V)&=\{(A,B):A,B\subset V\text{ with }A\cap B=\emptyset\},\\
		\tc(V)&=\mathcal{P}_2(V)\backslash\{(\emptyset,\emptyset),(V,\emptyset),(\emptyset, V)\}.
	\end{align}
	\item Given the graph $G=(V,E)$, for any $\vec x\in\mathbb{R}^n$ and $t\in\mathbb{R}$, the vertex set $V$ can be divided into three classes:
	\begin{align}
		V_t^+(\vec x)&=\{i\in V: x_i>t\},\label{subset:positive1}\\
		V_t^-(\vec x)&=\{i\in V: x_i<t\},\label{subset:negative1}\\
		V_t^0(\vec x)&=\{i\in V: x_i=t\}.\label{subset:small1}
	\end{align}
	The standard sign function $\sign(t)$ and the related set valued mapping $\sgn(t)$ are defined respectively as
	\begin{equation}
		\sign(t)= \begin{cases}
			1 & \text{if } t>0,\\
			0 & \text{if }t=0,\\
			-1 & \text{if }t<0,
		\end{cases}
		\,\text{ and }\,
		 \mathrm{Sgn}(t)= \begin{cases}
			\{1\} & \text{if } t>0,\\
			[-1,1] & \text{if }t=0,\\
			\{-1\} & \text{if }t<0,
		\end{cases}
	\end{equation}
	and they can act on vectors in a coordinate-wise manner, i.e.,
	\begin{equation}
		\sign(\vec x)=\left(\sign(x_1),\ldots,\sign(x_n)\right),\,\, \sgn(\vec x)=\left(\sgn(x_1),\ldots,\sgn(x_n)\right).
	\end{equation}
	\item The subgradient of any convex function $f:\R^n\to\R$ is defined as 
	\begin{equation}
		\partial f(\vec x)=\{\vec s\in\R^n:f(\vec y)-f(\vec x)\ge \langle \vec s,\vec y-\vec x\rangle,\,\forall\vec y\in\R^n\}.
	\end{equation}
	\item For any $\vec x\in\mathbb{R}^n$, we denote
	\begin{align}
		I(\vec x)&=\sum_{\{i,j\}\in E}w_{ij}|x_i-x_j|,\,N(\vec x)=\min_{t\in\R}\sum_{i\in V}d_i|x_i-t|,\,I^+(\vec x)=\sum_{\{i,j\}\in E}w_{ij}|x_i+x_j|,\label{form:iniplus}\\
		\|\vec x\|&=\sum_{i\in V}d_i|x_i|,\,\|\vec x\|_1=\sum_{i\in V}|x_i|,\,\|\vec x\|_{\infty}=\max\{|x_1|,|x_2|,\ldots,|x_n|\}.\label{form:norm}
	\end{align}
	We also use the notations:
	\begin{equation}
		\Delta_{1}\vec x=\partial I(\vec x),\quad\Delta_{1}^+\vec x,=\partial I^+(\vec x),\quad \vec d\circ\sgn(\vec x)=\partial\|\vec x\|,
	\end{equation}
	 where $\vec d = (d_1,\ldots,d_n)$ is the degree vector, with $d_i$ denoting the degree of vertex $i$, and “$\circ$’’ denotes the Hadamard (elementwise) product. Here we also refer to $\Delta_1$ and $\Delta_1^+$ as the graph 1-Laplacian and the signless 1-Laplacian, respectively.
	\item We define some compact sets as follows:
	\begin{align}
		X&=\{\vec x\in\mathbb{R}^n:\|\vec x\|=1\},\\
		X_{\infty}&=\{\vec x\in\mathbb{R}^n:\|\vec x\|_{\infty}=1\},\label{set:infty}\\
		X_1&=\{\vec x\in\mathbb{R}^n:\|\vec x\|_1=1\},\\
	    \Omega_1&=\{\vec x\in X_1:\max_i x_i +\min_i x_i = 0\},\label{set:omega1}\\
	    \Omega_2&=\{\vec x\in X_1: \min_i |x_i|=0\}.\label{set:omega2}
	\end{align}
	\item We write $\vec 1=(1,1,\ldots,1)$ for the all-ones vector in $\mathbb{R}^n$ and 
	\begin{equation}
		\label{eq:one-complement}
		\vec 1^\bot=\{\vec x\in\mathbb{R}^n:\,\langle\vec x,\vec 1\rangle=0\}
	\end{equation}
	for its orthogonal complement.
	\item For any nonempty subsets $A,B\subset V$, we define indicative vectors $\vec 1_A,\,\vec 1_{A,B}\in\mathbb{R}^n$:
	\begin{equation}
		\left(\mathbf{1}_A\right)_i= \begin{cases}1, & i \in A, \\ 0, & i \notin A,\end{cases}\qquad \vec 1_{A,B}=\vec 1_A-\vec 1_B,
	\end{equation}
	thus we have $\sign(\vec x)=\vec 1_{V_0^+(\vec x),V_0^-(\vec x)}$. 
	\item Given a graph $G=(V,E)$, for a vector $\vec x =(x_1,x_2,\cdots,x_n)\in \mathbb{R}^n\setminus \{\vec 0\}$, one divides $V$ into three groups
	\begin{eqnarray}
		&D_+(\vec x)& =\set{i\in V\big| x_i = \norm{\vec x}},\quad  D_-(\vec x)=\set{i\in V\big| -x_i = \norm{\vec x}},\label{eq:dplusminus}\\
		&D_0(\vec x)&=\set{i\in V\big| \abs{x_i}<\norm{\vec x}}.\label{eq:dzero}
	\end{eqnarray}
	\item We introduce a partial order $\vec x \preceq \vec y$ in a way that $\vec x,\vec y\ne \vec 0$ satisfies:
	\begin{eqnarray}
		&D_\pm(\vec x)\subseteq D_\pm(\vec y),\label{cond1:plusminus}\\
		&x_i< x_j\Rightarrow y_i\leq y_j;~x_i= x_j\Rightarrow y_i= y_j,~\forall i,j\in\{1,2,\cdots,n\}.\label{cond2:order}
	\end{eqnarray}
\end{itemize}

\section{A new perspective of nonlinear eigenproblems}
\label{sec:new-perspective}
In this section, we present a ``spectral" line of research for the graph cut problems, as shown in Fig.~\ref{fig:som-eigen}, after which we add technical details one by one in clockwise order. We unify the fundamental graph cut problems into a general class of 2-cut and 3-cut problems with fractional formulations. For any such problem $q^*$, we show that both the numerator and denominator are difference-of-submodular functions. Through the set-pair Lov\'asz extension, $q^*$ can be equivalently expressed into the continuous formulations, where the strict submodularity implies that both components are difference-of-convex functions. Furthermore, by employing a two-step Dinkelbach iterative scheme, we achieve a complete defractionalization of the original problem and transform the optimal solution of $q^*$ into the global optimum of the scheme. Finally, by isolating the first subproblem of the  Dinkelbach scheme, we construct the corresponding nonlinear eigenproblem and establish equivalence between the eigenpairs and the optimal solutions of $q^*$ in certain cases.

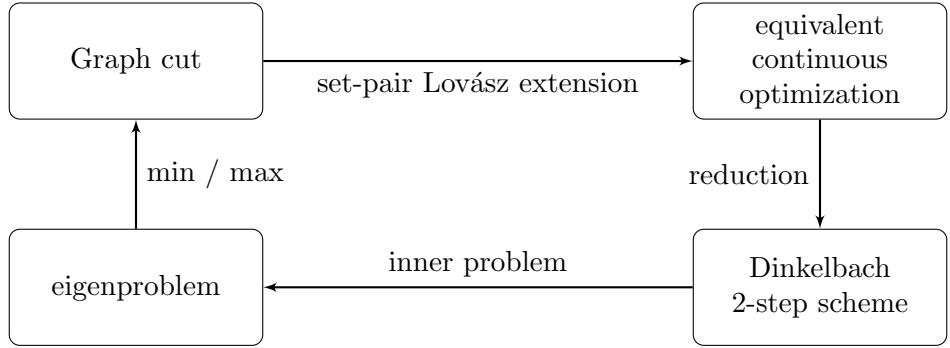
\begin{figure}[htbp]
	\centering
	\tikzstyle{pt} = [point]
	\tikzstyle{decision} = [diamond, draw, text width=4em, aspect=1.2, text badly centered, node distance=3cm, inner sep=0pt]
	\tikzstyle{decision2} = [diamond, draw, text width=6em, aspect=1.2, text badly centered, node distance=3cm, inner sep=0pt]
	\tikzstyle{block} = [rectangle, draw, 
	text width=8em, text centered, rounded corners, minimum height=4em]
	\tikzstyle{io} = [rectangle, draw, 
	text width=4em, text centered, rounded corners, minimum height=2em]
	\tikzstyle{line} = [draw, thick, -latex']
	\tikzstyle{cloud} = [draw, ellipse,fill=red!20, node distance=2.5cm,
	minimum height=2em]
	\tikzstyle{every node}=[scale=1.0]
	\begin{tikzpicture}
		
		\node [block] (graphcut) {Graph cut};
		
		\node [block, right of=graphcut, node distance=9cm] (som) {equivalent continuous optimization};
		
		\node [block, below of=som, node distance=3cm] (iter) { Dinkelbach's 2-step iterative scheme};
		
		\node [block, left of=iter, node distance=9cm] (eigen) {eigenproblem};
		
		\path [line] (graphcut)-- node[below] {set-pair Lov\'asz extension} (som);
		
			\path [line] (iter)-- node[above] {inner problem} (eigen);
			
			\path [line] (som)--node [anchor=east] {reduction } (iter);
			
			\path [line] (eigen)--node [anchor=west] {min / max} (graphcut);
		
	\end{tikzpicture}
	\caption{\small \textcolor{black}{A research roadmap for the construction of an equivalent continuous spectral theory for the graph cut problem.}} \label{fig:som-eigen}
\end{figure}

\subsection{\textcolor{black}{Equivalent continuous optimization for graph cut}}

 The 2-cut and 3-cut problems involve optimizing set functions or set-pair functions, and can be formulated as follows:
 
 \textcolor{black}{
 \begin{itemize}
 	\item 
 	\textbf{Reformulations of typical graph 2-cuts}.\;  
 	Assume that $F,G: \mathcal{P}(V)\rightarrow[0,+\infty)$ are two set functions with $G(A)>0$ whenever $A\in\bc(V)$,  $F(\emptyset)=G(\emptyset)=0$. Then numerous graph 2-cut problems can be formulated as	
 	\begin{subequations} \label{prob:2cut}
 		\begin{numcases}{}
 			\opt_{A\in\mathcal{P}(V)\backslash\{\emptyset\}}\frac{F(A)}{G(A)},\quad& if $G(V)>0$,\label{2cut-constant} \\
 			\opt_{A\in\bc(V)}\frac{F(A)}{G(A)},\quad& if $G(V)=0$, \label{2cut-nonconstant}
 		\end{numcases}
 	\end{subequations}
 	where $\opt\in\{\min,\max\}$.
 \end{itemize}
}


Among the four fundamental graph cut problems considered in this paper, maxcut \eqref{prob:maxcut} and anti-Cheeger cut \eqref{eq:antiCheeger} conform to \eqref{2cut-constant}, whereas mincut and Cheeger cut \eqref{prob:Cheeger} conform to \eqref{2cut-nonconstant}. Since set functions are not sufficient to describe graph 3-cut problems, we introduce set-pair functions to formalize the corresponding definition:
\textcolor{black}{\begin{itemize}
	\item \textbf{Reformulations of typical graph 3-cuts}.\;
	Assume that $f,g: \mathcal{P}_2(V)\rightarrow[0,+\infty)$ are two set-pair functions with $g(A,B)>0$ whenever $(A,B)\in\tc(V)$, $f(\emptyset,\emptyset)=g(\emptyset,\emptyset)=0$. Then numerous graph 3-cut problems can be formulated as	
	\begin{subequations} \label{prob:3cut}
		\begin{numcases}{}
			\opt_{(A,B)\in\mathcal{P}_2(V)\backslash\{(\emptyset,\emptyset)\}}\frac{f(A,B)}{g(A,B)},\quad& if $g(\emptyset,V),\,g(V,\emptyset)>0$,\label{3cut-constant} \\
			\opt_{(A,B)\in\tc(V)}\frac{f(A,B)}{g(A,B)},\quad& if $g(\emptyset,V)=g(V,\emptyset)=0$, \label{3cut-nonconstant}
		\end{numcases}
	\end{subequations}
	where $\opt\in\{\min,\max\}$.
\end{itemize}}

As established in \cite{Lovasz1983,BuhlerRangapuramSetzerHein2013}, the classic Lov\'asz extension is an effective tool for lifting set functions from a discrete domain $\mathcal{P}(V)$ to a continuous space $\mathbb{R}_+^n$, which facilitates continuous formulations of 2-cut problems through half-space representations. To enabling continuous formulations of 3-cut problems, the set-pair Lov\'asz extension was introduced in \cite{CSZZ-Lovasz18} to extended set-pair functions from a discrete domain $\mathcal{P}_2(V)$ to a continuous space $\mathbb{R}^n$. Notably, this extension also subsumes the 2-cut case (see Lemma 3.1 in \cite{CSZZ-Lovasz18}), and is therefore adopted throughout this paper as our primary analytical framework.

In this section, we establish the following two theorems:
\begin{theorem}
	\label{thm:dc-setpair}
	Every set-pair function $f: \mathcal{P}_2(V)\rightarrow[0,+\infty)$ is the difference of two strictly submodular functions, and its corresponding set-pair Lov\'asz extension $f^L$ is a DC (difference-of-convex) function.
\end{theorem}

\begin{theorem}
	\label{thm:dc-conti}
	Any  2-cut (see \eqref{prob:2cut}) or 3-cut (see \eqref{prob:3cut}) problem $q^* $ admits an equivalent continuous formulation 
		\begin{equation}
		\label{eq:som-lovasz}
		q^*=\mathop{\text{opt}}\limits_{\vec x\in\mathbb{R}^n\backslash\{\vec 0\}\text{ or } \text{nonconstant }\vec x\in\mathbb{R}^n} Q(\vec x),\quad Q(\vec x)=\frac{f_1^L(\vec x)-f_2^L(\vec x)}{g_1^L(\vec x)-g_2^L(\vec x)},
	\end{equation}  
	where $f_1^L$, $f_2^L$, $g_1^L$, $g_2^L$ are all convex one-homogeneous functions and arise as set-pair Lov\'asz extensions of certain strictly submodular functions.
\end{theorem}

These results show that the considered graph cut problems admit equivalent continuous formulations, where both the numerator and denominator of the objective function are set-pair Lov\'asz extensions that expressible in difference-of-convex (DC) form. Moreover, they highlight the intrinsic connections among set-pair functions, submodularity, and convexity. Before presenting the detailed proofs, we first introduce the key definitions and some properties for the set-pair Lov\'asz extension.



\textcolor{black}{
\begin{definition}[set-pair Lov\'asz extension  \cite{CSZZ-Lovasz18}]
	\label{def:setpair-lovasz}
	Given a set-pair function  $f: \mathcal{P}_2(V)\rightarrow[0,+\infty)$, its set-pair Lov{\'a}sz extension $f^L$ is defined by 
	\begin{equation}
		\label{eq:sigma-lovasz}
		f^L(\vec x)=\int_0^{\|\vec x\|_{\infty}} f\left(V_t^{+}(\vec x), V_t^{-}(\vec x)\right) d t.
	\end{equation}
\end{definition}}

\begin{proposition}	[equivalent continuous formulation for 2-cut \cite{CSZZ-Lovasz18,BuhlerRangapuramSetzerHein2013}]
	\label{thm:bc-lovasz}
	For any graph 2-cut problem specified in \eqref{prob:2cut}, let $f,g: \mathcal{P}_2(V)\rightarrow[0,+\infty)$ be two set-pair functions satisfying $f(A,B)=F(A)+F(B)$, $g(A,B)=G(A)+G(B)$, and denote their set-pair Lov\'asz extensions $f^L$, $g^L$, respectively. Then the equivalent continuous formulation of the graph 2-cut is given by
	\begin{subequations} \label{prob:2cut-conti}
		\begin{numcases}{}
			\opt_{A\in\mathcal{P}(V)\backslash\{\emptyset\}}\frac{F(A)}{G(A)}=\opt_{\vec x\in\mathbb{R}^n\backslash\{\vec 0\}}\frac{f^L(\vec x)}{g^L(\vec x)},\quad& if $G(V)>0$,\label{2cut-constant-conti} \\
			\opt_{A\in\bc(V)}\frac{F(A)}{G(A)}=\opt_{\text{nonconstant }\vec x\in\mathbb{R}^n}\frac{f^L(\vec x)}{g^L(\vec x)},\quad& if $G(V)=0$, \label{2cut-nonconstant-conti}
		\end{numcases}
	\end{subequations}
	where $\opt\in\{\min,\max\}$.
\end{proposition}

{\begin{proposition}[equivalent continuous formulation for 3-cut \cite{CSZZ-Lovasz18}]
	\label{thm:tc-lovasz}
	For any graph 3-cut problem specified in \eqref{prob:3cut}, let $f^L$ and $g^L$ be the set-pair Lov\'asz extensions of $f$ and $g$, respectively. Then the equivalent continuous formulation of the graph 3-cut is given by
		\begin{subequations} \label{prob:3cut-conti}
		\begin{numcases}{}
			\opt_{(A,B)\in\mathcal{P}_2(V)\backslash\{(\emptyset,\emptyset)\}}\frac{f(A,B)}{g(A,B)}=\opt_{\vec x\in\mathbb{R}^n\backslash\{\vec 0\}}\frac{f^L(\vec x)}{g^L(\vec x)},\,\,& if $g(\emptyset,V),\,g(V,\emptyset)>0$,\label{3cut-constant-conti} \\
			\opt_{(A,B)\in\tc(V)}\frac{f(A,B)}{g(A,B)}=\opt_{\text{nonconstant }\vec x\in\mathbb{R}^n}\frac{f^L(\vec x)}{g^L(\vec x)},\,\,& if $g(\emptyset,V)=g(V,\emptyset)=0$, \label{3cut-nonconstant-conti}
		\end{numcases}
	\end{subequations}
	where $\opt\in\{\min,\max\}$.
\end{proposition}

\textcolor{black}{\begin{definition}[set-pair function and strictly submodularity \cite{CSZZ-Lovasz18}]
		\label{def:set-submod}
	A set-pair function $f: \mathcal{P}_2(V) \rightarrow[0,+\infty)$ is said to be strictly submodular if the inequality
	\begin{equation}
		\label{leq:submodular}
		f(A_1, B_1)+f(A_2, B_2) \geq f((A_1 \cup A_2) \backslash(B_1 \cup B_2),(B_1 \cup B_2) \backslash(A_1 \cup A_2))+f(A_1 \cap A_2, B_1 \cap B_2)
	\end{equation}
	holds for any $(A_1, B_1),(A_2, B_2) \in \mathcal{P}_2(V)$.
\end{definition}}

\begin{proposition}[set-pair Lov\'asz extension and convexity \cite{CSZZ-Lovasz18}]
		\label{prop:convex}
	Let $f: \mathcal{P}_2(V) \rightarrow[0,+\infty)$ be a set-pair function satisfying $f(\varnothing, \varnothing)=0$. Then $f^L$ is convex if and only if $f$ is strictly submodular.
\end{proposition}

\begin{proposition}[\cite{SY24}]
	\label{prop:subgrad-3cut}
	For any $\vec x\in\mathbb{R}^n$, $\partial f^L(\vec x)\subseteq\partial f^L(\sign(\vec x))$ holds for every convex set-pair Lov\'asz extension $f^L$. Moreover, the one-homogeneous convexity of $f^L$ implies $f^L(\vec x)=\langle\vec x,\vec v\rangle$, $\forall\,\vec x\in\mathbb{R}^n$, $\forall\,\vec v\in\partial f^L(\vec x)$.
\end{proposition}

\begin{proposition} [\cite{CSZZ-Lovasz18}]
	\label{prop:linear}
	For any functions $f,g: \mathcal{P}_2(V) \rightarrow[0,+\infty)$ and their set-pair Lov\'asz extensions $f^L(\vec x)$ and $g^L(\vec x)$, we have
	\begin{enumerate}
		\item $f^L(\vec x)$ is one-homogeneous.
		\item $(f+\lambda g)^L=f^L+\lambda g^L$, $\forall \lambda \geq 0$.
		\item If $f$ is derived from a set function $F:\mathcal{P}(V) \rightarrow[0,+\infty)$, i.e., $f(A,B)=F(A)+F(B)$, then its set-pair Lov\'asz extension $f^L$ satisfies $f^L(\vec x+\alpha\vec 1)=f^L(\vec x)+\alpha F(V)$, $\forall\,\alpha\in\mathbb{R}$.
	\end{enumerate}
\end{proposition}

\textcolor{black}{
\begin{definition}
  The gap function $\mathcal{G}:\mathcal{P}_2(V)\times \mathcal{P}_2(V)\rightarrow\mathbb{R}$ for any given set-pair function $f$ is defined as
  $$\begin{aligned}
  	\mathcal{G}_f((A_1,B_1),(A_2,B_2))&=f(A_1,B_1)+f(A_2,B_2) -f(A_1 \cap A_2, B_1 \cap B_2)\\
  	&- f((A_1 \cup A_2) \backslash(B_1 \cup B_2),(B_1 \cup B_2) \backslash(A_1 \cup A_2)).
  \end{aligned}$$
\end{definition}
}

\textcolor{black}{
\begin{lemma}
	\label{lem:h-example}
	The set-pair function 
	\begin{equation}
		\label{eq:h-example}
		h(A_1,B_1):=|A_1||A_1^c|+|B_1||B_1^c|+\sqrt{|A_1|+|B_1|}
	\end{equation}
	is strictly submodular, and the corresponding gap function $\mathcal{G}_h((A_1,B_1),(A_2,B_2))$ equals 0 if and only if $(A_1, B_1) \subseteq(A_2,B_2)$ or $(A_1,B_1) \supseteq(A_2,B_2)$. Moreover, 
	\begin{equation}
		\mathcal{G}_h((A_1,B_1),(A_2,B_2))\geq\frac{1}{8\sqrt{2}|V|^{3/2}}
	\end{equation} 
	is satisfied whenever $(A_1,B_1) \nsubseteq (A_2,B_2)$ and $(A_1,B_1) \nsupseteq (A_2,B_2)$.
\end{lemma}}

\textcolor{black}{
\begin{proof}
	The necessity is obvious, and we only need to prove the sufficiency. Let 
	\begin{align*}
	h_1(A_1,B_1) &:= |A_1||A_1^c|+|B_1||B_1^c|, \\
	h_2(A_1,B_1) &:= \sqrt{|A_1|+|B_1|},
	\end{align*}
	and we have $h=h_1+h_2$, where $h_2$ is indeed strictly submodular \cite{CSZZ-Lovasz18}. Now we prove that $h_1$ satisfies \eqref{leq:submodular}. A direct calculation shows that for all $(A_1,B_1),(A_2,B_2)\in\mathcal{P}_2(V)$,
	$$\begin{aligned}
	  \mathcal{G}_{h_1}((A_1,B_1),(A_2,B_2))=\,&2|A_1\backslash A_2||A_2\backslash A_1|+2|B_1\backslash B_2||B_2\backslash B_1|\\
	  &+2|(A_1\cup A_2)\cap(B_1\cup B_2)||(A_1\cup B_1\cup A_2\cup B_2)^c|\geq 0.
	\end{aligned}$$
  The necessary condition of $\mathcal{G}_{h_1}((A_1,B_1),(A_2,B_2))=0$ is that at least one of the following four cases holds: $(A_1,B_1) \subseteq(A_2,B_2)$, $(A_1,B_1) \supseteq(A_2,B_2)$, $(A_1, B_2) \subseteq(A_2, B_1)$ and $(A_1, B_2) \supseteq(A_2, B_1)$. Therefore, for any $(A_1,B_1),(A_2,B_2)\in\mathcal{P}_2(V)$ not belonging to the four cases above, $$	\mathcal{G}_h((A_1,B_1),(A_2,B_2))\geq\mathcal{G}_{h_1}((A_1,B_1),(A_2,B_2))\geq 2.$$ The remaining task is to prove  $\mathcal{G}_h((A_1,B_1),(A_2,B_2))\geq\frac{1}{8\sqrt{2}|V|^{3/2}}$ under the additional condition $(A_1, B_2) \subset(A_2, B_1)$ or $(A_1, B_2) \supset(A_2, B_1)$. Without loss of generality, we only need to consider the first case with $|A_2|-|A_1|\geq 1$ and $|B_1|-|B_2|\geq 1$, that is,
  $$\begin{aligned}
  	\mathcal{G}_h((A_1,B_1),(A_2,B_2))&\geq \mathcal{G}_{h_2}((A_1,B_1),(A_2,B_2))\\
  	&\geq \sqrt{|A_1|+|B_1|}+\sqrt{|A_2|+|B_2|}-\sqrt{|B_1|+|A_2|}-\sqrt{|A_1|+|B_2|}\\
  	&\geq \frac{\sqrt{(|A_1|+|B_1|)(|A_2|+|B_2|)}-\sqrt{(|B_1|+|A_2|)(|A_1|+|B_2|)}}{2\sqrt{2|V|}}\\
  	&\geq \frac{(|A_2|-|A_1|)(|B_1|-|B_2|)}{8\sqrt{2}|V|^{3/2}}\geq \frac{1}{8\sqrt{2}|V|^{3/2}}.
  \end{aligned}$$
\end{proof}}

\textcolor{black}{
\begin{proof}[Proof of Theorem~\ref{thm:dc-setpair}]
According to Definition~\ref{def:set-submod}, $f$ is strictly submodular if and only if $\mathcal{G}_f((A_1,B_1),(A_2,B_2))\geq 0$ for all $(A_1,B_1),(A_2,B_2) \in \mathcal{P}_2(V)$. It is obvious that $$\mathcal{G}_f((A_1,B_1),(A_2,B_2))= 0$$ if $(A_1,B_1) \subseteq(A_2,B_2)$ or $(A_1,B_1) \supseteq(A_2,B_2)$. For all the remaining cases, we first rewrite $f$ as $f=(f+\lambda h)-(\lambda h)$ with $h$ being \eqref{eq:h-example}, thus $\mathcal{G}_f\geq 0$ under $\lambda\geq 8\sqrt{2}|V|^{3/2}(\max_{(A,B)}f(A,B)-\min_{(A,B)}f(A,B))$. Thus by Lemma~\ref{lem:h-example}, both $f+\lambda h$ and $\lambda h$ are strictly submodular functions. This implies that $f$ can be expressed as a difference of two strictly submodular functions.
Consequently, invoking Propositions \ref{prop:convex} and \ref{prop:linear}, we conclude that both $(f+\lambda h)^L$ and $(\lambda h)^L$ are one-homogeneous convex functions, ensuring that $f^L$ is a DC function. 
\end{proof}
}

\begin{proof}[Proof of Theorem~\ref{thm:dc-conti}]
	This proof is straightforward, relying on Theorem~\ref{thm:dc-setpair}, Propositions~\ref{thm:bc-lovasz} and \ref{thm:tc-lovasz}.
\end{proof}

\subsection{Equivalent 2-step iterative scheme}

Once the equivalent continuous formulation \eqref{eq:som-lovasz} of a graph cut problem is obtained, we are ready to establish a connection from  such continuous formulation to the Dinkelbach iterative scheme \cite{dinkelbach1967nonlinear}. 
%
Note that the feasible domain in \eqref{eq:som-lovasz} is not compact. \textcolor{black}{Hence, we restrict it to a compact set $\Omega$ with \eqref{set:omega1} and \eqref{set:omega2} defined by  
\begin{equation}
	\label{eq:dinkel-omega}
	\Omega=
 \left\{
	\begin{aligned}
		&	X_1,&\text{for } \vec x\in\mathbb{R}^n\backslash\{\vec 0\},\\
		&\Omega_1,&\text{for } \text{nonconstant }\vec x\in\mathbb{R}^n \text{ and 2-cut},\\
		&\Omega_1\cup\Omega_2,&\text{for } \text{nonconstant }\vec x\in\mathbb{R}^n \text{ and 3-cut},
	\end{aligned}
	\right.
\end{equation}}
Therefore, the corresponding graph cut problem~\eqref{eq:som-lovasz} maintains another equivalent continuous formulation:
$$q^*=\mathop{\text{opt}}\limits_{\vec x\in\Omega}Q(\vec x),\quad \text{ where }Q(\vec x)=\frac{f_1^L(\vec x)-f_2^L(\vec x)}{g_1^L(\vec x)-g_2^L(\vec x)},\quad \opt\in\{\min,\max\}.$$

\textcolor{black}{
\begin{theorem}[Dinkelbach's two-step iterative algorithm]
	\label{thm:dinkelbach} 
	For any initial point $\vec x^0\in\Omega$, the solution produced by the following two-step iteration 
	\begin{subequations}
		\label{dinkel-iter0}
		\begin{numcases}{}
			\vec x^{k+1}={\mathop{\mathrm{argopt}}\limits_{\vec x\in\Omega} \{f_1^L(\vec x)+\tau^k g_2^L(\vec x)-(f_2^L(\vec x)+\tau^k g_1^L(\vec x))\}},\label{dinkel-iter0-1}\\
			\tau^{k+1}=Q(\vec x^{k+1}), \label{dinkel-iter0-2}
		\end{numcases}
	\end{subequations}
converges to  $q^*$, where $ \mathop{\mathrm{argopt}}$ refers to $ \mathop{\mathrm{argmax}}$ or $ \mathop{\mathrm{argmin}}$.
\end{theorem}
}
\textcolor{black}{
\begin{proof}
We only need to consider the case $\text{opt}=\min$, as the case $\text{opt}=\max$ is similar.  Setting   $\tau_{\min}=\min_{\vec x\in\Omega} Q(\vec x)$, by assumptions, we have $\tau_{\min}=q^*$. 
	The vector $\vec x^{k+1}$ defined in \eqref{dinkel-iter0-1} satisfies
	\begin{equation*}
		\begin{aligned}
			0&=f_1^L(\vec x^k)+\tau^k g_2^L(\vec x^k)-(f_2^L(\vec x^k)+\tau^k g_1^L(\vec x^k))\\
			&\ge f_1^L(\vec x^{k+1})+\tau^k g_2^L(\vec x^{k+1})-(f_2^L(\vec x^{k+1})+\tau^k g_1^L(\vec x^{k+1})),\\
		\end{aligned}
	\end{equation*}
	which means 
	\begin{equation*}
		\tau^k\ge \tau^{k+1}\ge \tau_{\min}, \,\,\, \forall\, k\in \mathbb{N}^+.
	\end{equation*}
Therefore, we have
	\[
	\exists\, \tau^* \in [\tau_{\min},\tau^0]\,\, \text { s.t. }\, \lim\limits_{k\to+\infty}\tau^k=\tau^*,
	\]
According to \eqref{dinkel-iter0-2}, we have  $\tau^*\geq \tau_{\min}$, and it suffices to prove $\tau_{\min}\ge \tau^*$. Define the function
	\begin{equation*}
		l(\tau)=\min\limits_{\vec x\in\Omega}\left(f_1^L(\vec x)+\tau g_2^L(\vec x)-(f_2^L(\vec x)+\tau g_1^L(\vec x))\right).
	\end{equation*}
By the compactness of $\Omega$,  $\tau\mapsto f(\tau)$ is continuous on $\mathbb{R}$. Notice that 
	\begin{align*}
		l(\tau^k)&=f_1^L(\vec x^{k+1})+\tau^k g_2^L(\vec x^{k+1})-(f_2^L(\vec x^{k+1})+\tau^k g_1^L(\vec x^{k+1}))\\
		&=(\tau^{k+1}-\tau^k)(g_1^L(\vec x^{k+1})-g_2^L(\vec x^{k+1})) \rightarrow 0\quad \text{as}\quad k\to +\infty.
	\end{align*}
Then we have
	\begin{equation*}
		l(\tau^*)=\lim\limits_{k\to+\infty}l(\tau^k)=0,
	\end{equation*}
and thus
	\begin{equation*}
		f_1^L(\vec x)+\tau^* g_2^L(\vec x)-(f_2^L(\vec x)+\tau^* g_1^L(\vec x))\ge 0, \,\,\forall\,\vec x\in\Omega,
	\end{equation*}
which implies $\forall\, \vec x\in \Omega$, $Q(\vec x) \geq \tau^*$, i.e., $\tau_{\min}\ge \tau^*$. The proof is completed. 
\end{proof}}

\subsection{Equivalent nonlinear eigenproblem}
\textcolor{black}{In this section, we present the definition of the nonlinear eigenproblem, establish the equality between the eigenvalue and the optimal objective cut value, and explain how to obtain the optimal objective cut solution from the associated eigenvector. To this end, we first introduce some notations. 
}
Extracting the subproblem~\eqref{dinkel-iter0-1} of Dinkelbach's two-step iterative algorithm, the corresponding eigenproblem for the graph cut problem can be formulated as follows.

\begin{definition}[nonlinear eigenproblem]
	\label{def:eigen-all}
	We say that $(\mu,\vec x)\in\mathbb{R}\times \mathbb{R}^n\backslash\{\vec 0\}$  is an eigenpair associated to $Q(\vec x)$ if 
\begin{equation}
	  \label{eq:eigen-all}
	  \vec 0\in \partial f_1^L(\vec x)+\mu \partial g_2^L(\vec x)-(\partial f_2^L(\vec x)+\mu \partial g_1^L(\vec x)),
	\end{equation}	
	where $f_1^L$, $f_2^L$, $g_1^L$, $g_2^L$ are the same as those introduced in Theorem~\ref{thm:dc-conti}. We also simply call $(\mu,\vec x)$ an eigenpair of \eqref{eq:eigen-all}.
\end{definition}
\begin{theorem}
	\label{thm:eigen-value}
	For any eigenpair $(\mu,\vec x)\in\mathbb{R}\times \mathbb{R}^n\backslash\{\vec 0\}$ of the nonlinear eigenproblem~\eqref{eq:eigen-all}, we have
	$$\mu=\frac{f_1^L(\vec x)-f_2^L(\vec x)}{g_1^L(\vec x)-g_2^L(\vec x)}.$$
\end{theorem}
\begin{proof}
If $(\mu,\vec x)\in\mathbb{R}\times \mathbb{R}^n\backslash\{\vec 0\}$ is an eigenpair, there exists $\vec v_1\in\partial f_1^L(\vec x)$, $\vec v_2\in\partial f_2^L(\vec x)$, $\vec u_1\in\partial g_1^L(\vec x)$ and $\vec u_2\in\partial g_2^L(\vec x)$ satisfying 
	$$\vec 0=\vec v_1+\mu\vec u_2-(\vec v_2+\mu\vec u_1).$$
	According to Proposition~\ref{prop:subgrad-3cut}, we have
	$$
	\begin{aligned}
	 0&=\langle\vec v_1+\mu\vec u_2-(\vec v_2+\mu\vec u_1),\vec x\rangle\\
	 \Leftrightarrow 0&=f_1^L(\vec x)+\mu g_2^L(\vec x)-(f_2^L(\vec x)+\mu g_1^L(\vec x))\\
	 \Leftrightarrow \mu&= \frac{f_1^L(\vec x)-f_2^L(\vec x)}{g_1^L(\vec x)-g_2^L(\vec x)}.
	\end{aligned}
	$$
\end{proof}

\begin{theorem}[eigenpairs for 2-cut]
	\label{thm:eigen-2cut}
	All eigenvalues of the nonlinear eigenproblem \eqref{eq:eigen-all} are arranged in ascending order $\mu_1\leq\mu_2\leq\cdots\leq \mu_{\max}$. The relationship between  the 2-cut optima \eqref{eq:som-lovasz} and the eigenvalues is
	\begin{equation}
		\label{eq:eigenval}
		q^*=\left\{\begin{aligned}
			&\mu_1,&\text{if }q^*=\min\limits_{\vec x\in\mathbb{R}^n\backslash\{\vec 0\}} Q(\vec x),\\
			&\mu_2,&\text{if }q^*=\min\limits_{\text{nonconstant }\vec x\in\mathbb{R}^n} Q(\vec x),\\
			&\mu_{\max},&\text{if }q^*=\max\limits_{\vec x\in\mathbb{R}^n\backslash\{\vec 0\}} Q(\vec x).
		\end{aligned}
		\right.
	\end{equation}
	Moreover, for any eigenvector $\vec x$ associated with a given eigenvalue, \textcolor{black}{$\vec 1_{D_+(\vec x),(D_+(\vec x))^c}$ or $\vec 1_{(D_-(\vec x))^c,D_-(\vec x)}$} yields the optimal solution of the 2-cut problem.
\end{theorem}

\begin{theorem}[eigenpairs for 3-cut]\label{thm:eigen-3cut}
	All eigenvalues of the nonlinear eigenproblem \eqref{eq:eigen-all} are arranged in ascending order $\mu_1\leq\mu_2\leq\cdots\leq \mu_{\max}$. The relationship between  the 3-cut optima \eqref{eq:som-lovasz} and the eigenvalues is \eqref{eq:eigenval}.
	Moreover, for any eigenvector $\vec x$ associated with a given eigenvalue, $\sign(\vec x)$ yields the optimal solution of the 3-cut problem.
\end{theorem}

Before giving the proofs of Theorems~\ref{thm:eigen-2cut} and \ref{thm:eigen-3cut}, we need the following lemma. 

\begin{lemma}
	\label{lem:eigen-cut}
	Given any eigenpair $(\mu,\vec x)\in\mathbb{R}\times \mathbb{R}^n\backslash\{\vec 0\}$ of the eigenproblem~\eqref{eq:eigen-all}, $(\mu,\sign(\vec x))$ is also an eigenpair. Moreover, if \eqref{eq:eigen-all} is derived from a 2-cut problem, \textcolor{black}{$(\mu,\vec 1_{D_+(\vec x),(D_+(\vec x))^c})$ or $(\mu,\vec 1_{(D_-(\vec x))^c,D_-(\vec x)})$} is also an eigenpair.
\end{lemma}
\begin{proof}
	Let $\vec y=\sign(\vec x)$, thus we have $\partial f_1^L(\vec x)\subseteq \partial f_1^L(\vec y)$, $\partial f_2^L(\vec x)\subseteq \partial f_2^L(\vec y)$, $\partial g_1^L(\vec x)\subseteq \partial g_1^L(\vec y)$ and $\partial g_2^L(\vec x)\subseteq \partial g_2^L(\vec y)$  through Proposition~\ref{prop:subgrad-3cut}, which leads to 
	$$\vec 0\in \partial f_1^L(\vec y)+\mu \partial g_2^L(\vec y)-(\partial f_2^L(\vec y)+\mu \partial g_1^L(\vec y)).$$
	For the 2-cut case, select any $\vec v\in\partial f_1^L(\vec x)$ and $\alpha>0$. According to Propositions~\ref{prop:subgrad-3cut} and \ref{prop:linear}, we have
	$$\langle\vec v,\vec x+\alpha\vec 1\rangle+\langle\vec v,\vec x-\alpha\vec 1\rangle\leq f_1^L(\vec x+\alpha\vec 1)+f_1^L(\vec x-\alpha\vec 1)=2f_1^L(\vec x)=2\langle\vec v,\vec x\rangle,$$
	thus 
	$$\partial f_1^L(\vec x)\subseteq \partial f_1^L(\vec x+\alpha\vec 1),\quad \partial f_1^L(\vec x)\subseteq \partial f_1^L(\vec x-\alpha\vec 1).$$
	 Since $\vec x\in \mathbb{R}^n\backslash\{\vec 0 \}$ guarantees that at least one of \textcolor{black}{$D_+(\vec x)\neq\emptyset$ and $D_-(\vec x)\neq\emptyset$} holds, we assume, without loss of generality, that \textcolor{black}{$D_+(\vec x)\neq\emptyset$} is satisfied. Thus, a suitable $\alpha\in(0,\|\vec x\|_{\infty})$ can be chosen so that all components of $\vec y=\vec x-\alpha\vec 1$ are nonzero, and each component satisfies
	\textcolor{black}{$$\left\{
	\begin{aligned}
		y_i>0,\;\;\; &\text{if }i\in D_+(\vec x),\\
		y_i<0,\;\;\; &\text{if }i\in (D_+(\vec x))^c.
	\end{aligned}
	\right.
	$$}
	Now we have $\partial f_1^L(\vec x)\subseteq \partial f_1^L(\vec y)\subseteq\partial f_1^L(\vec z)$, $\partial f_2^L(\vec x)\subseteq \partial f_2^L(\vec y)\subseteq\partial f_2^L(\vec z)$, $\partial g_1^L(\vec x)\subseteq \partial g_1^L(\vec y)\subseteq\partial g_1^L(\vec z)$ and $\partial g_2^L(\vec x)\subseteq \partial g_2^L(\vec y)\subseteq\partial g_2^L(\vec z)$, where \textcolor{black}{$\vec z=\sign(\vec y)=\vec 1_{D_+(\vec x),(D_+(\vec x))^c}$}. Therefore, $(\mu,\vec z)$ is an eigenpair.
\end{proof}

\begin{proof}[Proof of Theorem~\ref{thm:eigen-2cut}]
 The proof is given in the following three cases.
	\begin{enumerate}
		\item 	Let  
		\begin{equation}
			\label{eq:prob1}
			q^*=\min_{A\in\mathcal{P}(V)\backslash\{\emptyset\}}\frac{F(A)}{G(A)}=\min\limits_{\vec x\in\mathbb{R}^n\backslash\{\vec 0\}} Q(\vec x),\quad Q(\vec x)=\frac{f_1^L(\vec x)-f_2^L(\vec x)}{g_1^L(\vec x)-g_2^L(\vec x)}
		\end{equation}
		 denote the 2-cut problem, and we first prove $\mu_1\geq q^*$. According to Lemma~\ref{lem:eigen-cut} and Theorem~\ref{thm:eigen-value}, there exists an eigenvector $\vec x\in\{-1,1\}^n$ corresponding to $\mu_1$, thus there exists $S\in\mathcal{P}(V)\backslash\{\emptyset\}$ satisfying 
		$$\mu_1=\frac{f_1^L(\vec x)-f_2^L(\vec x)}{g_1^L(\vec x)-g_2^L(\vec x)}=\frac{F(S)}{G(S)}\geq \min_{A\in\mathcal{P}(V)\backslash\{\emptyset\}}\frac{F(A)}{G(A)}=q^*.$$
		Now we prove $\mu_1\leq q^*$. Select $S\in\mathcal{P}(V)\backslash\{\emptyset\}$ satisfying \eqref{eq:prob1}, and then we have $q^*=Q(\vec x^*)$ with $\vec x^*=\vec 1_{S,S^c}$. For any $\vec x\in\mathbb{R}^n$, $\vec v_2\in\partial f_2^L(\vec x^*)$ and $\vec u_1\in\partial g_1^L(\vec x^*)$, let
		$$L(\vec x)=f_1^L(\vec x)+q^* g_2^L(\vec x)-\langle\vec v_2,\vec x\rangle-q^*\langle\vec u_1,\vec x\rangle.$$
		We have $L(\vec x^*)=L(\vec 0)=0$, and for any $\vec x\neq\vec 0$, we have
		$$L(\vec x)\geq f_1^L(\vec x)+q^* g_2^L(\vec x)-(f_2^L(\vec x)+q^* g_1^L(\vec x))=(g_1^L(\vec x)-g_2^L(\vec x))(Q(\vec x)-q^*)\geq 0.$$
		Based on the convexity of $L(\vec x)$, we have
		$$\vec 0\in\partial f_1^L(\vec x^*)+q^* \partial g_2^L(\vec x^*)-(\partial f_2^L(\vec x^*)+q^* \partial g_1^L(\vec x^*))\Leftrightarrow\vec x^*\in\argmin_{\vec x\in\mathbb{R}^n}L(\vec x),$$
		which means $(q^*,\vec x^*)$ is an eigenpair and $\mu_1\leq q^*$. As for obtaining optimal binary cut from $(\mu_1,\vec x)$, Lemma~\ref{lem:eigen-cut} gives the direct answer.
		\item 	Let  
		\begin{equation}
			\label{eq:prob2}
			q^*=\min_{A\in\bc(V)}\frac{F(A)}{G(A)}=\min\limits_{\text{nonconstant } \vec x} Q(\vec x),\quad Q(\vec x)=\frac{f_1^L(\vec x)-f_2^L(\vec x)}{g_1^L(\vec x)-g_2^L(\vec x)}
			\end{equation}
			denote the 2-cut problem, and we first prove $\mu_1\geq q^*$. Since $F(V)=G(V)=0$, we have $\mu_1=0$ and $$f_1^L(\vec 1)-f_2^L(\vec 1)=f_1^L(-\vec 1)-f_2^L(-\vec 1)=g_1^L(\vec 1)-g_2^L(\vec 1)=g_1^L(-\vec 1)-g_2^L(-\vec 1)=0.$$ According to Lemma~\ref{lem:eigen-cut} and Theorem~\ref{thm:eigen-value}, there exists a nonconstant eigenvector $\vec x\in\{-1,1\}^n\backslash\{\vec 1,-\vec 1\}$ corresponding to $\mu_2$, thus there exists $S\in\bc(V)$ satisfying 
			$$\mu_2=\frac{f_1^L(\vec x)-f_2^L(\vec x)}{g_1^L(\vec x)-g_2^L(\vec x)}=\frac{F(S)}{G(S)}\geq \min_{A\in\bc(V)}\frac{F(A)}{G(A)}=q^*.$$
			Now we prove $\mu_2\leq q^*$. Select $S\in\bc(V)$ satisfying \eqref{eq:prob2}, and then we have $q^*=Q(\vec x^*)$ with $\vec x^*=\vec 1_{S,S^c}$. For any $\vec x\in\mathbb{R}^n$, $\vec v_2\in\partial f_2^L(\vec x^*)$ and $\vec u_1\in\partial g_1^L(\vec x^*)$, let
			$$L(\vec x)=f_1^L(\vec x)+q^* g_2^L(\vec x)-\langle\vec v_2,\vec x\rangle-q^*\langle\vec u_1,\vec x\rangle.$$
			We have $L(\vec x^*)=L(\vec 0)=L(t\vec 1)=0$, $\forall t\in\mathbb{R}$, and for any nonconstant $\vec x\in\mathbb{R}^n$, we have
			$$L(\vec x)\geq f_1^L(\vec x)+q^* g_2^L(\vec x)-(f_2^L(\vec x)+q^* g_1^L(\vec x))=(g_1^L(\vec x)-g_2^L(\vec x))(Q(\vec x)-q^*)\geq 0.$$
			Based on the convexity of $L(\vec x)$, we have
			$$\vec 0\in\partial f_1^L(\vec x^*)+q^* \partial g_2^L(\vec x^*)-(\partial f_2^L(\vec x^*)+q^* \partial g_1^L(\vec x^*))\Leftrightarrow\vec x^*\in\argmin_{\vec x\in\mathbb{R}^n}L(\vec x),$$
			which means $(q^*,\vec x^*)$ is an eigenpair and $\mu_2\leq q^*$. Regarding the extraction of the optimal binary cut from 
			$(\mu_2,\vec x)$, since $\vec x$ is nonconstant, it follows that either \textcolor{black}{$D_+(\vec x)\notin\{\emptyset,V\}$ holds or $D_-(\vec x)\notin\{\emptyset,V\}$ holds. Consequently, either $\vec 1_{D_+(\vec x),(D_+(\vec x))^c}$ or $\vec 1_{(D_-(\vec x))^c,D_-(\vec x)}$} is nonconstant.
			\item 	Let  
			\begin{equation}
				\label{eq:prob3}
				q^*=\max_{A\in\mathcal{P}(V)\backslash\{\emptyset\}}\frac{F(A)}{G(A)}=\max\limits_{\vec x\in\mathbb{R}^n\backslash\{\vec 0\}} Q(\vec x),\quad Q(\vec x)=\frac{f_1^L(\vec x)-f_2^L(\vec x)}{g_1^L(\vec x)-g_2^L(\vec x)}
			\end{equation}
			denote the 2-cut problem, and we first prove $\mu_{\max}\leq q^*$. According to Lemma~\ref{lem:eigen-cut} and Theorem~\ref{thm:eigen-value}, there exists an eigenvector $\vec x\in\{-1,1\}^n$ corresponding to $\mu_{\max}$, thus there exists $S\in\mathcal{P}(V)\backslash\{\emptyset\}$ satisfying 
			$$\mu_{\max}=\frac{f_1^L(\vec x)-f_2^L(\vec x)}{g_1^L(\vec x)-g_2^L(\vec x)}=\frac{F(S)}{G(S)}\leq \max_{A\in\mathcal{P}(V)\backslash\{\emptyset\}}\frac{F(A)}{G(A)}=q^*.$$
			Now we prove $\mu_{\max}\geq q^*$. Select $S\in\mathcal{P}(V)\backslash\{\emptyset\}$ satisfying \eqref{eq:prob3}, and then we have $q^*=Q(\vec x^*)$ with $\vec x^*=\vec 1_{S,S^c}$. For any $\vec x\in\mathbb{R}^n$, $\vec v_1\in\partial f_1^L(\vec x^*)$ and $\vec u_2\in\partial g_2^L(\vec x^*)$, let
			$$L(\vec x)=f_2^L(\vec x)+q^* g_1^L(\vec x)-\langle\vec v_1,\vec x\rangle-q^*\langle\vec u_2,\vec x\rangle.$$
			We have $L(\vec x^*)=L(\vec 0)=0$, and for any $\vec x\neq\vec 0$, we have
			$$L(\vec x)\geq f_2^L(\vec x)+q^* g_1^L(\vec x)-(f_1^L(\vec x)+q^* g_2^L(\vec x))=(g_1^L(\vec x)-g_2^L(\vec x))(q^*-Q(\vec x))\geq 0.$$
			Based on the convexity of $L(\vec x)$, we have
			$$\vec 0\in\partial f_1^L(\vec x^*)+q^* \partial g_2^L(\vec x^*)-(\partial f_2^L(\vec x^*)+q^* \partial g_1^L(\vec x^*))\Leftrightarrow\vec x^*\in\argmin_{\vec x\in\mathbb{R}^n}L(\vec x),$$
			which means $(q^*,\vec x^*)$ is an eigenpair and $\mu_{\max}\geq q^*$. As for obtaining optimal binary cut from $(\mu_{\max},\vec x)$, Lemma~\ref{lem:eigen-cut} gives the direct answer.
	\end{enumerate}
\end{proof}
The proof of Theorem \ref{thm:eigen-3cut} is similar, and thus we omit it. 
}

\begin{proposition}
	\label{prop:another-eigenvector}
{If $(\mu,\vec x)$ is an eigenpair for graph 2-cut problems, then $\forall\,\alpha\in\R$, $(\mu,\vec x+\alpha\vec1)$ or $(\mu,\vec x-\alpha\vec1)$ is also an eigenpair.}
\end{proposition}

\begin{proof}
By the proof of Lemma \ref{lem:eigen-cut}, we have 
	$$\partial f_1^L(\vec x)\subseteq \partial f_1^L(\vec x+\alpha\vec 1),\quad \partial f_1^L(\vec x)\subseteq \partial f_1^L(\vec x-\alpha\vec 1)$$
which also holds for $f_2^L$, $g_1^L$ and $g_2^L$.
Therefore, it follows from 
$$\vec 0\in \partial f_1^L(\vec x)+\mu \partial g_2^L(\vec x)-(\partial f_2^L(\vec x)+\mu \partial g_1^L(\vec x))$$
that 
$$\vec 0\in \partial f_1^L(\vec x+\alpha\vec 1)+\mu \partial g_2^L(\vec x+\alpha\vec 1)-(\partial f_2^L(\vec x+\alpha\vec 1)+\mu \partial g_1^L(\vec x+\alpha\vec 1))$$
and
$$\vec 0\in \partial f_1^L(\vec x-\alpha\vec 1)+\mu \partial g_2^L(\vec x-\alpha\vec 1)-(\partial f_2^L(\vec x-\alpha\vec 1)+\mu \partial g_1^L(\vec x-\alpha\vec 1)).$$
\end{proof}

\begin{theorem}\label{th:tx+(1-t)y}
If $(\mu,\vec x)$ is an eigenpair, and $\vec y\in\R^n$ satisfies 
$\partial f_1^L(\vec x)\subseteq \partial f_1^L(\vec y)$, $\partial f_2^L(\vec x)\subseteq \partial f_2^L(\vec y)$, $\partial g_1^L(\vec x)\subseteq \partial g_1^L(\vec y)$ and $\partial g_2^L(\vec x)\subseteq \partial g_2^L(\vec y)$, then $(\mu,t\vec x+(1-t)\vec y)$ is an eigenpair for any $t\in[0,1]$.
\end{theorem}

\begin{proof}
It is clear that \textcolor{black}{$(\mu,\vec y)$} is an eigenpair. For the remaining part, we first 
prove $\partial f_1^L(\vec x)\subseteq \partial f_1^L(t\vec x+(1-t)\vec y)$.

For any $\vec v\in \partial f_1^L(\vec x)$, there holds $\vec v\in \partial f_1^L(\vec y)$. Then, \begin{align*}
\langle\vec v,t\vec x+(1-t)\vec y\rangle&\le f_1^L(t\vec x+(1-t)\vec y)
\\&\le tf_1^L(\vec x)+(1-t)f_1^L(\vec y)
\\&=t\langle\vec v,\vec x\rangle+(1-t)\langle\vec v,\vec y\rangle=\langle\vec v,t\vec x+(1-t)\vec y\rangle
\end{align*}
which implies that $\vec v\in \partial f_1^L(t\vec x+(1-t)\vec y)$. This proves that $\partial f_1^L(\vec x)\subseteq \partial f_1^L(t\vec x+(1-t)\vec y)$. 
Similar property holds for $f_2^L$, $g_1^L$ and $g_2^L$. Then, $(\mu,t\vec x+(1-t)\vec y)$ is also an eigenpair.
\end{proof}

{For graph 2-cut and 3-cut problems considered in this paper—namely, Cheeger cut, maxcut, anti-Cheeger cut and dual Cheeger problem—Table~\ref{tab:cut} lists their equivalent continuous formulations, their derived eigenproblems, and the correspondence between the eigenpairs and the objective graph cut optima.} 


{\setlength{\tabcolsep}{6pt}
	\renewcommand{\arraystretch}{1.7}
	\begin{landscape}
\begin{table}[]
	\caption{Graph 2-cut and 3-cut problems studied in this paper with their equivalent continuous formulations and eigenproblems.}
	\label{tab:cut}
	\begin{tabular}{ccccc}
		\hline
		\multirow{2}{*}{graph   cut}      & \multirow{2}{*}{continuous formulation} & \multirow{2}{*}{eigenproblem} & \multicolumn{2}{c}{eigenpair}              \\ \cline{4-5} 
		&                                  &                               & optimal value   & optimal solution \\ \hline
		\multirow{6}{*}{Cheeger cut}      & \multirow{3}{*}{$h(G)=\min\limits_{\text{nonconstant~}\vec x\in\mathbb{R}^n}\frac{I(\vec x)}{N(\vec x)}$}              & \multirow{3}{*}{$\vec0\in\partial I(\vec x)-\mu\partial N(\vec x)$}           & \multicolumn{2}{c}{$(\mu_2,\vec x)$}                     \\ \cline{4-5} 
		&                                  &                               & \multirow{2}{*}{$\mu_2=h(G)$} & $(D_+(\vec x),(D_+(\vec x))^c)$  or                  \\
		&                                  &                               &                     & $((D_-(\vec x))^c,D_-(\vec x))$                   \\ \cline{2-5} 
		& \multirow{3}{*}{$h(G)=\min\limits_{\text{nonconstant~}\vec x\in\mathbb{R}^n}\frac{\vol(V)\|\vec x\|_{\infty}-I^+(\vec x)}{N(\vec x)}$}              & \multirow{3}{*}{$\vec 0\in \vol(V)\partial\|\vec x\|_\infty-\partial I^+(\vec x)-\mu\partial N(\vec x)$}           & \multicolumn{2}{c}{$(\mu_2,\vec x)$}                     \\ \cline{4-5} 
		&                                  &                               & \multirow{2}{*}{$\mu_2=h(G)$} & $(D_+(\vec x),(D_+(\vec x))^c)$  or                   \\
		&                                  &                               &                     & $((D_-(\vec x))^c,D_-(\vec x))$                   \\ \hline
		\multirow{3}{*}{maxcut}           & \multirow{3}{*}{$h_{\max}(G)=\max\limits_{\vec x\in\mathbb{R}^n\backslash\{0\}^n}\frac{I(\vec x)}{\vol(V)\|\vec x\|_\infty}$}              & \multirow{3}{*}{$\vec 0\in 
		\partial I(\vec x) -\mu\vol(V)\partial\|\vec x\|_\infty$}           & \multicolumn{2}{c}{$(\mu_{\max},\vec x)$}                     \\ \cline{4-5} 
		&                                  &                               & \multirow{2}{*}{$\mu_{\max}=h_{\max}(G)$} & $(D_+(\vec x),(D_+(\vec x))^c)$  or                   \\
		&                                  &                               &                     & $((D_-(\vec x))^c,D_-(\vec x))$                   \\ \hline
		\multirow{3}{*}{anti-Cheeger cut} & \multirow{3}{*}{$h_{\anti}(G)=\max\limits_{\vec x\in\mathbb{R}^n\backslash\{0\}^n}\frac{I(\vec x)}{2\vol(V)\|\vec x\|_\infty - N(\vec x)}$}              & \multirow{3}{*}{$\vec 0\in 
			\partial I(\vec x) -2\mu\vol(V)\partial\|\vec x\|_\infty+\mu\partial N(\vec x)$}           & \multicolumn{2}{c}{$(\mu_{\max},\vec x)$}                     \\ \cline{4-5} 
		&                                  &                               & \multirow{2}{*}{$\mu_{\max}=h_{\anti}(G)$} & $(D_+(\vec x),(D_+(\vec x))^c)$  or                  \\
		&                                  &                               &                     & $((D_-(\vec x))^c,D_-(\vec x))$                  \\ \hline
		\multirow{3}{*}{dual Cheeger}     & \multirow{3}{*}{$1-h^+(G)=\min\limits_{\vec x\in\R^n\backslash\{\vec 0\}}\frac{I^+(\vec x)}{\|\vec x\|}$}              & \multirow{3}{*}{$\vec 0\in\partial I^+(\vec x)- \mu  \partial\|\vec x\|$}           & \multicolumn{2}{c}{$(\mu_1,\vec x)$}                     \\ \cline{4-5} 
		&                                  &                                & \multirow{2}{*}{$\mu_1=1-h^+(G)$} & \multirow{2}{*}{$(D_+(\vec x),D_-(\vec x))$}  \\
		&                                  &                               &                     &           \\ \hline
	\end{tabular}
\end{table}
\end{landscape}
}
\section{Equivalent eigenproblems for Cheeger cut}
\label{sec:Cheeger}
\subsection{Graph 1-Laplacian: the first equivalent eigenproblem for Cheeger cut} 
To facilitate the reader's quick understanding of our  results, in this subsection, we briefly review the graph 1-Laplacian,
and more details can be referred to \cite{HeinBuhler2010,refCSZ-JCM,refC-JCT}.

The graph 1-Laplacian $\Delta_1$, defined as the subgradient of the total variation functional can be written in coordinate form:
$$(\Delta_1\vec x)_i\coloneqq\left\{\left.\sum_{j\in V:\{i,j\}\in E}w_{ij}z_{ij}\right|\begin{array}{l}z_{ij}\in \mathrm{Sgn}(x_i-x_j),\\z_{ij}=-z_{ji}
\end{array} \right\},\;i\in V,$$
\textcolor{black}{and the \emph{eigenvalue problem} for $\Delta_1$  consists in finding  $\mu\in\R$ and $\vec x\ne \vec 0$ such that
$$\Delta_1\vec x\bigcap \mu  \vec d\circ\sgn(\vec x)\ne\emptyset,$$
which is essentially  equivalent to the eigenproblem \eqref{eq:1-lap}.}  {\color{black} Due to the high nonlinearity of the 1-Laplacian eigenvalue problem, we do not know a priori how many
eigenvalues exist \cite{refCSZ-JCM,refC-JCT}. Nevertheless, it is always possible to select $n$ of them as representatives of the whole
spectrum \cite{refC-JCT,refCSZ-Adv}. An approach in selecting typical eigenvalues is via the min-max principle. }

\textcolor{black}{A nonlinear analog to the Courant-Fischer-Weyl min-max principle, originated in the Lusternik-Schnirelman theory \cite{LS34}, is a useful tool for finding certain critical values of a functional \cite{refC-JCT}.}   
\textcolor{black}{Precisely, by applying the Lusternik-Schnirelman theory \cite{refCDM,refDM,refIS,refK,refPa} \textcolor{black}{to the energy functional of $\Delta_1$}, we can define a sequence of \emph{min-max} eigenvalues for $\Delta_1$ as follows (see \eqref{form:iniplus}):
\begin{equation}\label{eq:1-Lap-minmax}
{\color{black}\mu_k(\Delta_1)}\coloneqq  \inf_{\mathrm{genus}(S)\ge k}\sup\limits_{\vec x\in S} \frac{\sum_{\{i,j\}\in E}w_{ij}|x_i-x_j|  }{\sum_{i\in V}d_i|x_i|}= \inf_{\mathrm{genus}(S)\ge k}\sup\limits_{\vec x\in S} \frac{I(\vec x)}{\|\vec x\|},    
\end{equation}}
for $k\geq 1$,
where $\mathrm{genus}(S)$ denotes the 
Krasnoselskii genus of the centrally symmetric compact  subset $S$ in $C(V)\cong
\R^V$. {\color{black}We point out that $\mu_k(\Delta_1)$ is not the $k$-th smallest eigenvalue of $\Delta_1$, but rather the $k$-th smallest min-max eigenvalue of $\Delta_1$. And it should be noted that $\mu_1(\Delta_1)$ and $\mu_2(\Delta_1)$ coincide exactly with the smallest eigenvalue and the second smallest eigenvalue of $\Delta_1$, respectively. } {\color{black}For an eigenvector $\vec x=(x_1, x_2, \cdots, x_n)$, 
a nodal domain of $\vec x$ is defined as a maximal connected subset of  $V_0^+(\vec x)$ (or $V_0^-(\vec x)$ respectively). The nodal domains have two important properties \cite{refCSZ-Adv}: the 1st one is that any Cheeger set can be identified with any  nodal domain of any eigenvector corresponding to $\mu_2(\Delta_1)$; the 2nd one is the Courant-type nodal domain theorem which states that the number of nodal domains of any eigenvector corresponding to $\mu_k(\Delta_1)$ is at most $k+r-1$ with $r$ being the multiplicity of $\mu_k(\Delta_1)$. }

\textcolor{black}{The main motivation of the study of 1-Laplacian is the following identity \cite{HeinBuhler2010,refCSZ-JCM,refC-JCT} (see \eqref{form:iniplus} and the second row of Table~\ref{tab:cut}):
\begin{equation}
h(G)={\color{black}\mu_2(\Delta_1)}=\min\limits_{\text{nonconstant~}\vec x\in\mathbb{R}^n}\frac{I(\vec x)}{N(\vec x)}.
\label{cheeger-contin1} 
\end{equation}}
Applying Definition~\ref{def:eigen-all} into the equivalent formulation~\eqref{cheeger-contin1} leads to \textcolor{black}{
\begin{equation}\label{eq:1-lap-new}
\vec0\in\Delta_1\vec x-\mu\partial N(\vec x),
\end{equation}}
which will be proved to be a reformulation of 1-Laplacian eigenproblem~\ref{eq:1-lap}.

\textcolor{black}{
\begin{proposition}\label{pro:equi-1-lap}
The relationships between the two eigenproblems with respect to graph 1-Lapalcian $\Delta_{1}$ and Cheeger cut are as follows:
\begin{enumerate}[(a)]
\item If $(\mu,\vec x)$ is an eigenpair of $\Delta_1$ \eqref{eq:1-lap}, then for any $c\in\R$, $(\mu,\vec x-c\vec1)$ is an eigenpair of \eqref{eq:1-lap-new}. 
\item Conversely, if $(\mu,\vec x)$ is an eigenpair of \eqref{eq:1-lap-new}, then for any $c\in \mathrm{median}(\vec x)$, $(\mu,\vec x-c\vec1)$ is an eigenpair of $\Delta_1$ \eqref{eq:1-lap}.
\end{enumerate}
\end{proposition}}
\textcolor{black}{
\begin{proof}
According to Proposition~\ref{prop:another-eigenvector}, we know that if  $(\mu,\vec x)$ is an eigenpair of \eqref{eq:1-lap-new}, then $(\mu,\vec x-c\vec1)$ is also an eigenpair of \eqref{eq:1-lap-new} for any $c\in\R$. Since the graph 1-Laplacian $\Delta_1$ and $N(\vec x)$ are translation invariants along the vector $\vec1$, i.e., $$I(\vec x-c\vec 1)=I(\vec x),\quad\Delta_1(\vec x-c\vec1)=\Delta_1\vec x,\quad N(\vec x-c\vec 1)=N(\vec x),\quad  \partial N(\vec x-c\vec 1)=\partial N(\vec x)$$ for any $\vec x\in\R^n$ and any $c\in\R$. And it is easy to calculate that
$$\sum_{i=1}^n(\Delta_1\vec x)_i=0\Rightarrow\Delta_1\vec x\subset\vec 1^\bot,\quad\partial N(\vec x)=\vec d\circ\sgn(\vec x-c\vec 1)\cap \vec1^\bot,\,\forall\, c\in\median(\vec x).$$
Now we prove (a). If $(\mu,\vec x)$ is an eigenpair of $\Delta_1$ with $\mu\ne0$, then by Theorems 2.6 and 2.9 in \cite{refCSZ-JCM}, it is known that $0\in \median(\vec x)$, which is equivalent to $N(\vec x)=\|\vec x\|$. So for the nontrivial eigenvector $\vec x$, we have
$$\vec0\in \Delta_1\vec x-\mu\partial\|\vec x\|\cap \vec1^\bot=\Delta_1\vec x-\mu\vec d\circ\sgn(\vec x)\cap \vec1^\bot= \Delta_1\vec x-\mu\partial N(\vec x),$$
meaning that  $(\mu,\vec x)$ is an eigenpair of \eqref{eq:1-lap-new}, and thus 
for any $c\in\R$, $(\mu,\vec x-c\vec1)$ is an eigenpair of \eqref{eq:1-lap-new}. Finally, we prove (b). Suppose that $(\mu,\vec x)$ is an eigenpair of \eqref{eq:1-lap-new} and take  $c\in\mathrm{median}(\vec x) $, then 
\begin{align*}
	\vec0&\in \Delta_1(\vec x)-\mu\partial N(\vec x)\subseteq\Delta_1(\vec x-c\vec1)-\mu\partial N(\vec x-c\vec1)\\
	&=\Delta_1(\vec x-c\vec1)-\mu\vec d\circ\sgn(\vec x-c\vec1)\cap \vec1^\bot\subseteq \Delta_1(\vec x-c\vec1)-\mu\vec d\circ\sgn(\vec x-c\vec1)    
\end{align*}
indicating that $(\mu,\vec x-c\vec 1)$ is an eigenpair of $\Delta_1$.
\end{proof}
}

Proposition \ref{pro:equi-1-lap} shows that  Eq.~\eqref{eq:1-lap-new} is actually a new formulation of the graph 1-Laplacian eigenproblem.

\subsection{A new equivalent eigenproblem for Cheeger cut}
\label{sec:new-Cheeger}

\textcolor{black}{Recently, a new equivalent formulation of Cheeger constant is presented \cite{SY24} (see \eqref{form:iniplus} and \eqref{form:norm}):
\begin{equation}
h(G)=\min\limits_{\text{nonconstant~}\vec x\in\mathbb{R}^n}\frac{\vol(V)\|\vec x\|_{\infty}-I^+(\vec x)}{N(\vec x)}\label{cheeger-contin2},
\end{equation}
which leads to the following new equivalent eigenproblem for Cheeger cut (see the third row of Table~\ref{tab:cut}):
\begin{equation*}
0\in \vol(V)\partial\|\vec x\|_\infty-\partial I^+(\vec x)-\mu\partial N(\vec x)
\end{equation*}}
and by introducing the signless 1-Laplacian $\Delta^+_1:=\partial I^+(\cdot)$, we get the eigenproblem \eqref{eq:new-Cheeger-equi}. 
It would be convenient to reformulate \eqref{eq:new-Cheeger-equi} in coordinate form: $\exists\,z_{ij}\in \sgn(x_i+x_j) \mbox{ s.t. }z_{ij}=z_{ji}$ and  $\exists\,v_i\in d_i\sgn(x_i-c_x)$, $c_x\in\median(\vec x)$ such that
 \begin{equation}
 	\label{eigen-cheeger}
 	\left\{
 	\begin{aligned}
 		&\sum_{i\in V} v_i=0,\\
 		&\sum\limits_{j:\{i,j\}\in E} z_{ij} +\mu v_i= 0, &i\in D_0,\\
 		&\sum\limits_{j:\{i,j\}\in E} z_{ij} +\mu v_i\in  \vol(V)\sign(x_i)\cdot [0,1],& i\in D_\pm,\\
 		&\sum_{i\in V} \big|\sum\limits_{j:\{i,j\}\in E} z_{ij}+\mu v_i \big|=\vol(V).&
 	\end{aligned}
 	\right.
 \end{equation}
The structure of  eigenvectors of \eqref{eigen-cheeger} is quite complicate. We provide some properties in this section. 


  \begin{theorem}\label{theorem:ch2}
Given a graph $G=(V,E)$, the following properties hold:
 	\begin{enumerate}[(a)]
 		\item\label{th:c-ch} For any nonempty proper subset $A\subseteq V$, taking $\vec x=\vec 1_{A,A^c}$, then 
 		$\mu=\frac{\abs{\partial A}}{\min\{\vol(A),\vol(A^c)\}}$, and  $(\mu,\vec x)$ is an eigenpair.
 		\item\label{tpD-ch} If  $(\mu,\vec x)$  is an eigenpair, then $\frac{\vol(V)\|\vec x\|_{\infty}-I^+(\vec x)}{N(\vec x)}=\mu$.
 		\textcolor{black}{In addition, $(\mu,\vec 1_{V_0^+,V_0^-})$ is also an  eigenpair, where the definition of $V_0^+$ and  $V_0^-$ is shown in \eqref{subset:positive1} and \eqref{subset:negative1} with $t=0$.}
 	\end{enumerate}
 \end{theorem}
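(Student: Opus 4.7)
Plan. The theorem has two structurally distinct claims. Part (a) I would handle by a direct verification of the coordinate-form eigenequation \eqref{eigen-cheeger}, while part (b) splits into a short Euler-type identity for the ratio and a more delicate subgradient-transfer argument for the sign-vector claim.

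For part (a), take $\vec x=\vec 1_{A,A^c}$ and assume without loss of generality that $\vol(A)\le\vol(A^c)$. Then $\|\vec x\|_\infty=1$, $D_\pm=V$, $D_0=\emptyset$, and $c_x=-1\in\median(\vec x)$, so $\sgn(x_i-c_x)$ equals $\{1\}$ on $A$ and $[-1,1]$ on $A^c$. I will set $v_i=\mu_i$ on $A$ and $v_i=-\mu_i\,\vol(A)/\vol(A^c)$ on $A^c$, so that $\sum_i v_i=0$ and $v_i$ lies in the required subdifferential of $N$. On edges inside $A$ (respectively $A^c$) the constraint $z_{ij}\in\sgn(x_i+x_j)$ forces $z_{ij}=+1$ (respectively $-1$), while each cut edge allows any symmetric choice $z_{ij}=z_{ji}\in[-1,1]$. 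With $\mu=|\partial A|/\vol(A)$, the inclusion $\sum_j z_{ij}+\mu v_i\in\vol(V)\sign(x_i)[0,1]$ at each vertex and the mass identity $\sum_i|\sum_j z_{ij}+\mu v_i|=\vol(V)$ then reduce to an elementary counting identity, which holds regardless of the specific $z_{ij}$ chosen on the cut edges.

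The first assertion of part (b) is a one-line consequence of Euler's identity: since $\|\cdot\|_\infty$, $I^+$ and $N$ are convex and positively one-homogeneous, any $\vec s\in\partial F(\vec x)$ satisfies $\langle\vec s,\vec x\rangle=F(\vec x)$. Inner-producting the eigenequation \eqref{eq:new-Cheeger-equi} with $\vec x$ then yields $e\|\vec x\|_\infty-I^+(\vec x)-\mu N(\vec x)=0$, which rearranges to the stated ratio formula (matching the statement after identifying $e$ with $\vol(V)$ under the present vertex-weight convention).

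The second assertion of part (b)---that $\vec y:=\vec 1_{V_0^+,V_0^-}$ is again an eigenvector with the same $\mu$---is the main obstacle. My plan is to transport the subgradient certificate of $\vec x$ to $\vec y$. The enabling observation is that each relevant subdifferential is a sign-determined object: $\mathrm{argmax}|\vec x|\subseteq V_0^+\cup V_0^-=\mathrm{argmax}|\vec y|$, so $\partial\|\vec x\|_\infty\subseteq\partial\|\vec y\|_\infty$; each $z_{ij}\in\sgn(x_i+x_j)$ remains admissible for $\vec y$ since the constraint $\sgn(y_i+y_j)$ is never tighter on any edge type; and, after choosing $c_y\in\median(\vec y)$ compatibly with $c_x$, the components $v_i$ continue to lie in $\mu_i\sgn(y_i-c_y)$. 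The subtle point is the case $V_0^0\ne\emptyset$: the vertices in $V_0^0$ switch from $D_\pm$-type to $D_0$-type equations in \eqref{eigen-cheeger}, which demands the strict equality $\sum_j z_{ij}+\mu v_i=0$ at those vertices. Re-establishing this local balance---by absorbing the residual imbalance into the new freedom afforded by the enlarged $\partial\|\vec y\|_\infty$ and by the free $z_{ij}$ on edges incident to $V_0^0$, while preserving the global mass identity $\sum_i|\cdot|=\vol(V)$---is the step I expect to require the most care.
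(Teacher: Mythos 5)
Your overall strategy coincides with the paper's: part (a) by exhibiting an explicit certificate for the coordinate form \eqref{eigen-cheeger} (your single case with $c_x=-1$ after the WLOG reduction is exactly one of the paper's three median cases), and part (b) by the three subdifferential inclusions $\partial\|\vec x\|_\infty\subseteq\partial\|\vec y\|_\infty$, $\partial I^+(\vec x)\subseteq\partial I^+(\vec y)$, $\partial N(\vec x)\subseteq\partial N(\vec y)$. Your Euler-identity derivation of the Rayleigh quotient formula in (b) is correct and is actually more explicit than the paper, which leaves that identity tacit. One small overstatement in (a): the mass identity does \emph{not} hold for an arbitrary symmetric choice of $z_{ij}\in[-1,1]$ on cut edges (a large value can violate the sign inclusion at a vertex and break the cancellation in $\sum_i|\cdot|$); but this is harmless since only one valid certificate is needed and your choice $z_{ij}=0$ on cut edges works.

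The ``main obstacle'' you flag at the end of (b) is, however, illusory, and as written your argument stops short of the conclusion for no reason. A vertex $i\in V_0^0$ has $x_i=0$, hence $|x_i|<\norm{\vec x}$ (an eigenvector is nonzero), so $i$ already lies in $D_0(\vec x)$ and satisfies the strict balance $\sum_j z_{ij}+\mu v_i=0$ in the certificate for $\vec x$; passing to $\vec y$ it stays in $D_0(\vec y)$ and the same equality is reused verbatim. The vertices that \emph{do} change type are those with $0<|x_i|<\norm{\vec x}$, which move from $D_0(\vec x)$ to $D_\pm(\vec y)$ --- and there the constraint only relaxes, since $0\in\vol(V)\sign(y_i)\cdot[0,1]$. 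More cleanly, once the three inclusions are established you are done without touching the coordinate form at all: the eigenrelation \eqref{eq:new-Cheeger-equi} for $\vec x$ provides specific subgradients $\vec u\in\partial\|\vec x\|_\infty$, $\vec w\in\partial I^+(\vec x)$, $\vec s\in\partial N(\vec x)$ with $e\vec u-\vec w-\mu\vec s=\vec 0$, and by the inclusions this very same triple witnesses $\vec 0\in e\partial\|\vec y\|_\infty-\partial I^+(\vec y)-\mu\partial N(\vec y)$. No residual imbalance needs to be absorbed anywhere; this is precisely how the paper closes the proof.
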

 \textcolor{black}{
\begin{proof}
	(a) and (b) follow directly from Lemma~\ref{lem:eigen-cut} and Theorem \ref{thm:eigen-value}.
\end{proof}}

\begin{definition}[vertex cover]
A subset $A\subseteq V$ is called a \emph{vertex cover} if every edge of $G$ has at least one of member of $A$  as an endpoint. 
\end{definition}

\begin{theorem}
Given a connected graph $G=(V,E)$, suppose that $\mu$ is the smallest nonzero eigenvalue, and $\vec x$ is the corresponding eigenvector, then $V_0^+(\vec x)\cup V_0^-(\vec x)$ gives a vertex cover of $G$.
 \end{theorem}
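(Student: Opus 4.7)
The plan is to combine the two equivalent variational characterizations of the Cheeger constant---the classical one \eqref{cheeger-contin1} via $I(\vec x)=\sum_{\{i,j\}\in E}w_{ij}|x_i-x_j|$ and the new one \eqref{cheeger-contin2} via $e\|\vec x\|_\infty-I^+(\vec x)$---through a sharp pointwise inequality; the vertex cover property will then drop out of the equality case.

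First, by Theorem \ref{theorem:ch2}(b), the ternary vector $\vec y:=\vec 1_{V_0^+(\vec x),V_0^-(\vec x)}$ is also an eigenvector for $\mu$, and evidently $V_0^\pm(\vec y)=V_0^\pm(\vec x)$. We may therefore replace $\vec x$ by $\vec y$ and assume from the outset that $\vec x$ takes values in $\{-1,0,+1\}$ with $\|\vec x\|_\infty=1$. The bridge between the two formulations is the elementary identity $|a+b|+|a-b|=2\max(|a|,|b|)$, which summed over the edges yields
\begin{equation*}
I(\vec x)+I^+(\vec x)=2\sum_{\{i,j\}\in E}w_{ij}\max(|x_i|,|x_j|)\le e\|\vec x\|_\infty,
\end{equation*}
with equality if and only if $\max(|x_i|,|x_j|)=\|\vec x\|_\infty$ for every edge $\{i,j\}\in E$.

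By Theorem \ref{theorem:ch2}(b) the Rayleigh quotient of the new formulation at $\vec x$ equals $\mu$, and by \eqref{cheeger-contin1} one has $h(G)=\inf I(\vec z)/N(\vec z)$. Combining these with the inequality above gives
\begin{equation*}
\mu=\frac{e\|\vec x\|_\infty-I^+(\vec x)}{N(\vec x)}\ \ge\ \frac{I(\vec x)}{N(\vec x)}\ \ge\ h(G).
\end{equation*}
Since $\mu$ is the smallest nonzero eigenvalue of \eqref{eq:new-Cheeger-equi} and therefore equals $h(G)$, equality must hold throughout, and in particular the first inequality is sharp. The corresponding equality case translated to the ternary $\vec x$ says that every edge $\{i,j\}\in E$ has at least one endpoint with absolute value $1$, i.e., in $V_0^+(\vec x)\cup V_0^-(\vec x)$. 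This is precisely the vertex cover property. The only real creative step is spotting the pointwise identity that links the two variational formulas; once noticed, the argument reduces to a short chain of sharp (in)equalities, and connectedness is used only implicitly via $h(G)>0$ so that $\mu$ is attained on a nonconstant eigenvector.
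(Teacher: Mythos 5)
Your proof is correct, and it takes a genuinely different route from the paper's. The paper also begins by passing to the ternary eigenvector $\vec y=\vec 1_{V_0^+,V_0^-}$ via Theorem \ref{theorem:ch2}~(b), but it then splits into three cases according to the median of $\vec y$, writes $\mu$ explicitly as a ratio of edge counts to volumes, and compares it with the eigenvalues $\mu_1,\mu_2$ of the binary vectors $\vec 1_{V_0^+,(V_0^+)^c}$ and $\vec 1_{V_0^-,(V_0^-)^c}$ through a mediant inequality; minimality of $\mu$ then forces $|E(V_0^0,V_0^0)|=0$. You instead use the pointwise identity $|a+b|+|a-b|=2\max(|a|,|b|)$ to get $I(\vec x)+I^+(\vec x)\le e\|\vec x\|_\infty$ with a clean equality characterization, sandwich $\mu$ between the two variational expressions for $h(G)$ (namely \eqref{cheeger-contin1} and Theorem \ref{eq:Cheeger-1-in-2}), and read off the vertex cover from the equality case. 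This avoids the median case analysis entirely and isolates the structural reason the result holds: the defect $e\|\vec x\|_\infty-I(\vec x)-I^+(\vec x)=2\sum_{\{i,j\}\in E}w_{ij}\bigl(\|\vec x\|_\infty-\max(|x_i|,|x_j|)\bigr)$ vanishes exactly when the full-amplitude support is a vertex cover --- an identity the paper itself deploys only in the subsequent theorem on $\Delta_1$-eigenpairs, so your argument is very much in the spirit of the surrounding material. Two points you leave implicit (as does the paper): $\vec y$ must be nonconstant so that $N(\vec y)>0$ and the quotients are defined (if $\vec y$ is constant then $V_0^+\cup V_0^-=V$ and the conclusion is vacuous), and the identification of the smallest nonzero eigenvalue with $h(G)$ is precisely Theorem \ref{eq:Cheeger-1-in-2}, which you should cite at that step.
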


 \begin{proof}
Let $\vec y=\vec 1_{V_0^+,V_0^-}$. By Theorem~\ref{theorem:ch2}~\ref{tpD-ch}, $(\mu,\vec y)$ is also an eigenpair. For $c_{y}\in\median(\vec y)$, we discuss it below:
 	\begin{enumerate}
 		\item If $c_{y}=1$, then $\vol(V_0^+)\geq \frac{1}{2}\vol(V)$, and  $$\mu=\frac{2|E(V_0^+,V_0^-)|+|E(V_0^+,V_0^0)|+|E(V_0^-,V_0^0)|+2|E(V_0^0,V_0^0)|}{2\vol(V_0^-)+\vol(V_0^0)}.$$
 	By Theorem~\ref{theorem:ch2}~\ref{th:c-ch},	 $(\mu_1,\vec 1_{V_0^+,(V_0^+)^c})$ is an eigenpair, where
 		$$\mu_1=\frac{|E(V_0^+,V_0^-)|+|E(V_0^+,V_0^0)|}{\vol(V_0^-)+\vol(V_0^0)}.$$
 Similarly, $(\mu_2,\vec 1_{V_0^-,(V_0^-)^c})$,  is an eigenpair, where 	$$\mu_1=\frac{|E(V_0^+,V_0^-)|+|E(V_0^-,V_0^0)|}{\vol(V_0^-)}.$$
 		By the condition, 
 		$$\mu\leq\min\{\mu_1,\mu_2\}\leq \frac{2|E(V_0^+,V_0^-)|+|E(V_0^+,V_0^0)|+|E(V_0^-,V_0^0)|}{2\vol(V_0^-)+\vol(V_0^0)}\leq \mu,$$
 		and then we get $|E(V_0^0,V_0^0)|=0$.
 		
 		\item If $c_{y}=-1$, then $\vol(V_0^-)\geq \frac{1}{2}\vol(V)$, and  $$\mu=\frac{2|E(V_0^+,V_0^-)|+|E(V_0^+,V_0^0)|+|E(V_0^-,V_0^0)|+2|E(V_0^0,V_0^0)|}{2\vol(V_0^+)+\vol(V_0^0)}.$$
 		Note that $(\mu_1,\vec 1_{V_0^+,(V_0^+)^c})$ is an eigenpair with
 		$$\mu_1=\frac{|E(V_0^+,V_0^-)|+|E(V_0^+,V_0^0)|}{\vol(V_0^+)},$$
 		Analogously, $(\mu_2,\vec 1_{V_0^-,(V_0^-)^c})$ is an eigenpair with $$\mu_1=\frac{|E(V_0^+,V_0^-)|+|E(V_0^-,V_0^0)|}{\vol(V_0^+)+\vol(V_0^0)}.$$
 		By assumption, 
 		$$\mu\leq\min\{\mu_1,\mu_2\}\leq \frac{2|E(V_0^+,V_0^-)|+|E(V_0^+,V_0^0)|+|E(V_0^-,V_0^0)|}{2\vol(V_0^+)+\vol(V_0^0)}\leq \mu,$$
 		which implies $|E(V_0^0,V_0^0)|=0$.
 		
 		\item If $c_{y}=0$, then $\vol(V_0^+),\,\vol(V_0^-)\leq \frac{1}{2}\vol(V)$ and $$\mu=\frac{2|E(V_0^+,V_0^-)|+|E(V_0^+,V_0^0)|+|E(V_0^-,V_0^0)|+2|E(V_0^0,V_0^0)|}{\vol(V_0^+)+\vol(V_0^-)}.$$
 		Clearly, $(\mu_1,\vec 1_{V_0^+,(V_0^+)^c})$ is an eigenpair with $$\mu_1=\frac{|E(V_0^+,V_0^-)|+|E(V_0^+,V_0^0)|}{\vol(V_0^+)},$$
 		and $(\mu_2,\vec 1_{V_0^-,(V_0^-)^c})$ is an eigenpair with 
 		$$\mu_1=\frac{|E(V_0^+,V_0^-)|+|E(V_0^-,V_0^0)|}{\vol(V_0^-)}.$$
 		The condition implies  
 		$$\mu\leq\min\{\mu_1,\mu_2\}\leq \frac{2|E(V_0^+,V_0^-)|+|E(V_0^+,V_0^0)|+|E(V_0^-,V_0^0)|}{\vol(V_0^+)+\vol(V_0^-)}\leq \mu,$$
 		and thus $|E(V_0^0,V_0^0)|=0$.
 	\end{enumerate}
This derives that $V_0^+(\vec x)\cup V_0^-(\vec x)$ is a vertex cover.
 \end{proof}

The theorem below shows a connection between the spectra of graph 1-Laplacian and the eigenproblem \eqref{eq:new-Cheeger-equi}.

\textcolor{black}{
\begin{theorem}\label{eq:Cheeger-1-in-2}
The spectrum of $\Delta_1$ \eqref{eq:1-lap} is included in the spectrum of the  eigenproblem \eqref{eq:new-Cheeger-equi}.  In addition, the first nontrivial eigenvalue of either $\Delta_1$ \eqref{eq:1-lap} or  
\eqref{eq:new-Cheeger-equi} equals $h(G)$.
\end{theorem}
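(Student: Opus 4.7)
The key pointwise identity $|x_i-x_j|+|x_i+x_j|=2\max(|x_i|,|x_j|)\leq 2\|\vec{x}\|_\infty$, summed against the edge weights, yields
\begin{equation*}
I(\vec{x})+I^+(\vec{x})\;\leq\;e\|\vec{x}\|_\infty,
\end{equation*}
equivalently $I(\vec{x})\geq e\|\vec{x}\|_\infty-I^+(\vec{x})$, with equality iff $\{i:|x_i|=\|\vec{x}\|_\infty\}$ is a vertex cover of $G$. Consequently the Rayleigh quotient of \eqref{eq:new-Cheeger-equi} is pointwise dominated by that of $\Delta_1$, and the two agree on every binary vector $\vec{1}_{A,A^c}$.

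The second assertion follows at once from this. By \eqref{cheeger-contin1} and \eqref{cheeger-contin2}, the infima of $I(\vec{x})/N(\vec{x})$ and of $(e\|\vec{x}\|_\infty-I^+(\vec{x}))/N(\vec{x})$ over nonconstant $\vec{x}$ both equal $h(G)$; by the Lusternik--Schnirelman principle invoked in \eqref{eq:1-Lap-minmax}, these infima are the first nontrivial variational eigenvalues of $\Delta_1$ and of \eqref{eq:new-Cheeger-equi} respectively. Theorem~\ref{theorem:ch2}(\ref{th:c-ch}) applied to a Cheeger set $A^*$ concretely furnishes $\vec{1}_{A^*,(A^*)^c}$ as an eigenvector of \eqref{eq:new-Cheeger-equi} realizing $h(G)$.

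For the spectrum inclusion, the plan is to represent each eigenvalue of $\Delta_1$ as a Cheeger-type ratio and then reuse Theorem~\ref{theorem:ch2}(\ref{th:c-ch}). Given an eigenpair $(\lambda,\vec{x})$ of $\Delta_1$, I would first normalize so that $0\in\median(\vec{x})$ (justified by Theorems~2.6 and 2.9 of \cite{refCSZ-JCM}, as already used in Proposition~\ref{pro:equi-1-lap}) and then invoke the structural theory of $\Delta_1$-eigenvectors from \cite{refC-JCT,refCSZ-JCM}---nodal-domain constancy up to sign, the median property, and the ensuing fact that every admissible level cut produces the same Rayleigh ratio---to extract a level set $A=\{i:x_i\geq t\}$ with $\lambda=|\partial A|/\min\{\vol(A),\vol(A^c)\}$. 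Theorem~\ref{theorem:ch2}(\ref{th:c-ch}) then places this ratio in the spectrum of \eqref{eq:new-Cheeger-equi} via the binary eigenvector $\vec{1}_{A,A^c}$, completing the inclusion.

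The main obstacle is precisely this last extraction step: when $\vec{x}$ is not already binary or ternary, one has to argue, rather than check directly, that some single level cut witnesses the full eigenvalue $\lambda$. A completely direct subgradient construction (producing symmetric $z'_{ij}\in\sgn(x_i+x_j)$ and $\vec{s}\in\partial\|\vec{x}\|_\infty$ from the $\Delta_1$-data $(z_{ij},\vec{v})$ so that $es_i-\sum_j w_{ij}z'_{ij}=\sum_j w_{ij}z_{ij}=\lambda v_i$) would sidestep the structural theorems, but it is blocked precisely by the failure of equality in the key inequality, i.e.\ by the fact that $\{i:|x_i|=\|\vec{x}\|_\infty\}$ need not be a vertex cover for a general $\Delta_1$-eigenvector.
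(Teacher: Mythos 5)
Your route is essentially the paper's own: for the inclusion, reduce a nonzero eigenvalue of $\Delta_1$ to a binary eigenvector realizing $\mu=\abs{\partial A}/\min\{\vol(A),\vol(A^c)\}$ and feed that into Theorem~\ref{theorem:ch2}~\ref{th:c-ch}; for the second assertion, compare the two variational characterizations \eqref{cheeger-contin1} and \eqref{cheeger-contin2}. Two remarks. First, your key inequality is stated in the wrong direction: from $|x_i-x_j|+|x_i+x_j|=2\max(|x_i|,|x_j|)\le 2\|\vec x\|_\infty$ one gets $I(\vec x)+I^+(\vec x)\le e\|\vec x\|_\infty$, hence $I(\vec x)\le e\|\vec x\|_\infty-I^+(\vec x)$; so the Rayleigh quotient of \eqref{eq:new-Cheeger-equi} pointwise \emph{dominates} that of $\Delta_1$, rather than being dominated by it. This does not sink the argument, since you ultimately derive the equality of the two minima from the quoted identities \eqref{cheeger-contin1} and \eqref{cheeger-contin2} (the paper instead reproves the lower bound by reducing to ternary vectors $\vec 1_A-\vec 1_B$ and estimating $2e-I^+(\vec 1_A-\vec 1_B)\ge\abs{\partial A}+\abs{\partial B}$), but the direction should be corrected; your equality criterion (the set $\{i:|x_i|=\|\vec x\|_\infty\}$ being a vertex cover) is right and is exactly what the paper exploits in its later theorem on when $\Delta_1$-eigenvectors transfer to \eqref{eq:new-Cheeger-equi}. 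Second, the step you flag as the main obstacle --- extracting a single level cut witnessing $\lambda$ --- is precisely what the paper also takes as given: it simply asserts that every positive eigenvalue of $\Delta_1$ admits a binary eigenvector $\vec 1_A$ with the stated ratio, relying on the structural theory of \cite{refC-JCT,refCSZ-JCM}. So your plan contains no gap beyond what the paper itself outsources to the cited literature; it only needs the sign fixed and that citation made explicit.
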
}

\begin{proof}
This is a direct consequence of Lemma~\ref{lem:eigen-cut}, Proposition~\ref{pro:equi-1-lap} and Theorems \ref{thm:eigen-value} and \ref{thm:eigen-2cut}.
\end{proof}

Combining Theorem \ref{theorem:ch2} \ref{tpD-ch} and Theorem \ref{eq:Cheeger-1-in-2}, it is easy so see
\begin{proposition}
For any eigenvector $\vec x$ corresponding to the second eigenvalue of \eqref{eq:new-Cheeger-equi}, $(V_0^+(\vec x),(V_0^+(\vec x))^c)$ is a Cheeger cut of $G$.
\end{proposition}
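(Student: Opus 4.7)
The plan is to combine the two cited results by a mediant inequality. Let $\vec x$ be an eigenvector for the second (smallest nonzero) eigenvalue $\mu_2$ of \eqref{eq:new-Cheeger-equi}. By Theorem~\ref{eq:Cheeger-1-in-2}, $\mu_2=h(G)$, and by Theorem~\ref{theorem:ch2}~\ref{tpD-ch} the induced ternary vector $\vec y:=\vec 1_{V_0^+(\vec x),V_0^-(\vec x)}$ is again an eigenvector at the same eigenvalue, so its Rayleigh quotient realizes the Cheeger constant:
\[
\frac{e\|\vec y\|_{\infty}-I^+(\vec y)}{N(\vec y)}=h(G).
\]

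Next I would evaluate the two sides of this quotient using the formulas that appear inside the proof of Theorem~\ref{eq:Cheeger-1-in-2}. Writing $A=V_0^+(\vec x)$ and $B=V_0^-(\vec x)$, one has $N(\vec y)=\min\{\vol(A),\vol(A^c)\}+\min\{\vol(B),\vol(B^c)\}$ and $e\|\vec y\|_\infty-I^+(\vec y)=|\partial A|+|\partial B|+2|E((A\cup B)^c)|$. The key additional ingredient is the vertex-cover theorem proved just above: because $\vec x$ realizes the smallest nonzero eigenvalue, $A\cup B$ is a vertex cover of $G$, so $|E((A\cup B)^c)|=0$. The identity therefore simplifies to
\[
\frac{|\partial A|+|\partial B|}{\min\{\vol(A),\vol(A^c)\}+\min\{\vol(B),\vol(B^c)\}}=h(G).
\]

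Finally I would invoke the mediant inequality. By the very definition of $h(G)$, each individual ratio $|\partial A|/\min\{\vol(A),\vol(A^c)\}$ and $|\partial B|/\min\{\vol(B),\vol(B^c)\}$ is at least $h(G)$, yet their mediant attains $h(G)$ exactly; this forces equality in both. In particular $|\partial A|/\min\{\vol(A),\vol(A^c)\}=h(G)$, which is precisely the statement that $(A,A^c)=(V_0^+(\vec x),V_0^+(\vec x)^c)$ is a Cheeger cut. The only point I expect to pause over is the degenerate case $B=\varnothing$ (symmetrically $A=\varnothing$), where one of the denominators vanishes and the mediant step does not literally apply; in that case $\vec y=\vec 1_A$ and the Rayleigh-quotient identity, together with the vertex-cover cancellation, collapses directly to the single ratio $|\partial A|/\min\{\vol(A),\vol(A^c)\}=h(G)$, so the conclusion still holds. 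This is the only subtlety I anticipate; everything else is immediate from the previously established theorems.
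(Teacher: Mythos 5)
Your proof is correct and follows exactly the route the paper intends: the paper gives no written proof beyond the remark that the proposition follows by ``combining Theorem~\ref{theorem:ch2}~\ref{tpD-ch} and Theorem~\ref{eq:Cheeger-1-in-2},'' and your mediant-inequality argument (with each ratio $\ge h(G)$ forcing equality, and $2|E((A\cup B)^c)|\ge 0$ forcing that term to vanish) is the natural way to fill in that combination. Note only that the vertex-cover theorem is not actually needed as an input, since the same equality-forcing step already yields $|E((A\cup B)^c)|=0$ for free.
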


\textcolor{black}{
\begin{theorem}
Assume that  $(\mu,\vec x)$ is an eigenpair of $\Delta_1$ \eqref{eq:1-lap}, if $\{j\in V:x_j=\max_i x_i\text{ or }x_j=\min_ix_i\}$ is a vertex cover of $G$, then $\vec x-(\frac{\max_i x_i+\min_ix_i}{2})\vec 1$ is an eigenvector corresponding to the eigenvalue $\mu$ of the eigenproblem \eqref{eq:new-Cheeger-equi}.   
\end{theorem}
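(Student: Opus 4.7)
The plan is to use the elementary identity $|a+b|+|a-b|=2\max(|a|,|b|)$ to recast the vertex cover hypothesis as the condition for equality in a pointwise inequality between two convex functions, and then use a short convex-analytic argument together with Proposition \ref{pro:equi-1-lap} to lift the $\Delta_1$-eigenvector $\vec x$ to an eigenvector of \eqref{eq:new-Cheeger-equi} centered at the natural shift $\vec y=\vec x-c\vec 1$, where $c=(\max_i x_i+\min_i x_i)/2$.

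First I would introduce $M=\max_i x_i$, $m=\min_i x_i$, $\vec y=\vec x-c\vec 1$, so that $\|\vec y\|_\infty=(M-m)/2$ and the hypothesized vertex cover is exactly $\{i:|y_i|=\|\vec y\|_\infty\}$. Applying the identity $|y_i+y_j|+|y_i-y_j|=2\max(|y_i|,|y_j|)$ edge by edge and using the vertex cover hypothesis to ensure $\max(|y_i|,|y_j|)=\|\vec y\|_\infty$ for every edge $\{i,j\}\in E$, I would establish the equality
\[
I^+(\vec y)+I(\vec y)=2\sum_{\{i,j\}\in E}w_{ij}\|\vec y\|_\infty=e\,\|\vec y\|_\infty,
\]
while for arbitrary $\vec z\in\R^n$ only the inequality $I^+(\vec z)+I(\vec z)\le e\,\|\vec z\|_\infty$ holds in general.

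The key convex-analytic step is then the following standard fact: if $f_1,f_2:\R^n\to\R$ are convex with $f_1\ge f_2$ pointwise and $f_1(\vec y)=f_2(\vec y)$, then $\partial f_2(\vec y)\subseteq\partial f_1(\vec y)$ (this is immediate from writing the subgradient inequality for $f_2$ and substituting $f_1\ge f_2$). I would apply this with $f_1(\vec z)=e\|\vec z\|_\infty$ and $f_2(\vec z)=I^+(\vec z)+I(\vec z)$, and invoke the Moreau--Rockafellar sum rule (valid since $I^+$ and $I$ are continuous and finite-valued on $\R^n$) to conclude
\[
\Delta_1^+\vec y+\Delta_1\vec y=\partial I^+(\vec y)+\partial I(\vec y)\subseteq e\,\partial\|\vec y\|_\infty.
\]

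To finish, Proposition \ref{pro:equi-1-lap} implies that $(\lambda,\vec y)$ is an eigenpair of the reformulated 1-Laplacian problem \eqref{eq:1-lap-new}, so there exists $\vec v\in\partial N(\vec y)$ with $\lambda\vec v\in\Delta_1\vec y$. Choosing any $\vec s\in\Delta_1^+\vec y$, the inclusion above gives $\vec s+\lambda\vec v\in e\,\partial\|\vec y\|_\infty$, which means $\vec 0\in e\,\partial\|\vec y\|_\infty-\Delta_1^+\vec y-\lambda\,\partial N(\vec y)$, i.e., $(\lambda,\vec y)$ is an eigenpair of \eqref{eq:new-Cheeger-equi} as claimed. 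The only delicate point I foresee is verifying that the sum rule and the inclusion $\partial f_2(\vec y)\subseteq \partial f_1(\vec y)$ are strong enough to allow an arbitrary choice of $\vec s\in\Delta_1^+\vec y$; but both hinge only on the pointwise identity $|a+b|+|a-b|=2\max(|a|,|b|)$ and the equality case captured exactly by the vertex cover assumption, so the argument is robust.
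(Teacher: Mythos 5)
Your proposal is correct and follows essentially the same route as the paper's own proof: both reduce to the shifted vector, use the identity $|a+b|+|a-b|=2\max(|a|,|b|)$ together with the vertex cover hypothesis to get the pointwise equality $I^+(\vec y)+I(\vec y)=e\|\vec y\|_\infty$ against the general inequality, deduce $\partial I^+(\vec y)+\partial I(\vec y)\subseteq e\,\partial\|\vec y\|_\infty$ from the equality case of the subgradient inequality, and conclude via Proposition \ref{pro:equi-1-lap}. Your worry about the arbitrary choice of $\vec s\in\Delta_1^+\vec y$ is unfounded, since the eigenrelation only requires the existence of one compatible triple of subgradients, which your inclusion supplies.
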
}
\textcolor{black}{
\begin{proof}
Let  $A=\{j\in V:x_j=\max_i x_i\}$,  $B=\{j\in V: x_j=\min_ix_i\}$ and let $c=\frac{\max_i x_i+\min_ix_i}{2}$. The assumption says  that $A\cup B$ gives a vertex cover of the graph. We shall prove that $(\mu,\vec x-c\vec1)$ is an eigenpair of \eqref{eq:new-Cheeger-equi}.
First, by Proposition \ref{pro:equi-1-lap}, $(\lambda,\vec x-c\vec1)$ is an eigenpair of \eqref{eq:1-lap-new}. 
The next key step is to show  
$$\vol(V)\|\vec x-c\vec1\|_\infty-I^+(\vec x-c\vec1)=I(\vec x-c\vec1).$$
In fact, without loss of generality, we only need to consider the case of  $c=0$ (otherwise we just change variable $\hat{\vec x}=\vec x-c\vec1$), and in this case, $A\cup B=\{j\in V:|x_j|=\|\vec x\|_\infty\}$ is a vertex cover. Thus, for any edge $\{i,j\}\in E$, either $|x_j|=\|\vec x\|_\infty$ or $|x_i|=\|\vec x\|_\infty$, meaning that $\max\{|x_i|,|x_j|\}=\|\vec x\|_\infty$. In consequence, 
\begin{align*}
I^+(\vec x)+I(\vec x)&=\sum_{\{i,j\}\in E}w_{ij}\big(|x_i+x_j|+|x_i-x_j|\big)
\\&=2\sum_{\{i,j\}\in E}w_{ij}\max\{|x_i|,|x_j|\}
\\&=2\sum_{\{i,j\}\in E}w_{ij}\|\vec x\|_\infty=\vol(V)\|\vec x\|_\infty.
\end{align*}
It is easy to see that for any $\vec y\in \R^n$, $\vol(V)\|\vec y \|_\infty\ge I^+(\vec y )+ I(\vec y )$. Thus, for any $\vec v\in \partial I(\vec x )$ and $\vec v_+\in \partial I^+(\vec x )$, 
\begin{align*}
\langle\vec v +\vec v_+,\vec y-\vec x\rangle& = \langle\vec v  ,\vec y-\vec x\rangle+\langle \vec v_+,\vec y-\vec x\rangle
\\&\le   (I(\vec y )-I(\vec x ))+(I^+(\vec y )-I^+(\vec x ) )
\\&=(I(\vec y )+I^+(\vec y ))-(I(\vec x )+I^+(\vec x ))
\\&\le \vol(V)\|\vec y \|_\infty-\vol(V)\|\vec x \|_\infty,\;\forall \vec y\in\R^n,
\end{align*} 
meaning that $\vec v +\vec v_+\in \partial \big(\vol(V)\|\vec x \|_\infty\big)=\vol(V) \partial\|\vec x \|_\infty$. In consequence, $\vec v\in \vol(V)\partial\|\vec x \|_\infty-\vec v_+\subseteq \vol(V) \partial\|\vec x \|_\infty-\partial I^+(\vec x )$. By the arbitrariness of $\vec v$, we obtain 
$$\Delta_1\vec x=\partial I (\vec x ) \subseteq \vol(V) \partial\|\vec x \|_\infty-\partial I^+(\vec x )=\vol(V) \partial\|\vec x \|_\infty-\Delta_1^+\vec x.$$
Since $(\mu,\vec x)$ is an eigenpair of \eqref{eq:1-lap-new},
$$\vec0\in \Delta_1\vec x-\mu\partial N(\vec x)\subseteq \vol(V) \partial\|\vec x \|_\infty-\Delta_1^+\vec x-\mu\partial N(\vec x).$$
This derives that $(\mu,\vec x)$ is also an eigenpair of the eigenproblem \eqref{eq:new-Cheeger-equi}.  
\end{proof}
}

\section{Higher order spectral theory for dual Cheeger problems
}
\label{sec:dual-Cheeger}

The signless 1-Laplacian $\Delta_1^+$, introduced in \cite{SYZ},  can be written in coordinate form:
$$(\Delta_1^+\vec x)_i\coloneqq\left\{\left.\sum_{j\in V:\{i,j\}\in E}w_{ij}z_{ij}\right|\begin{array}{l}z_{ij}\in \mathrm{Sgn}(x_i+x_j),\\z_{ij}=z_{ji}
\end{array} \right\},\;i\in V.$$
The \emph{eigenvalue problem} for $\Delta_1^+$ is to find  $\lambda\in\R$ and $\vec x\ne \vec 0$ such that
$$\Delta_1^+\vec x\bigcap {\color{black}\mu \vec d\circ}\mathrm{Sgn}(\vec x)\ne\emptyset.$$
This is essentially  equivalent to the eigenproblem \eqref{eq:signless-1}. 
For convenience, we will simply set $w_{ij}=1$ {\color{black}for $\{i,j\}\in E$}. 
One of the main results in \cite{SYZ} gives 
\begin{equation}
1-h^+(G) = \mu_1(\Delta_1^+) =\min\limits_{\vec x\in\R^n\backslash\{\vec 0\}}\frac{I^+(\vec x)}{\|\vec x\|}, \label{eq:dc-discrete} 
\end{equation}
which is one of the advantages of signless 1-Laplacian. {\color{black}Parallel to \eqref{eq:1-Lap-minmax}, the authors \cite{SYZ} applied the Lusternik-Schnirelman theory to define a sequence of min-max eigenvalues \begin{equation}\label{eq:1-Lap+minmax}
\mu_k(\Delta_1^+)\coloneqq   \inf_{\mathrm{genus}(S)\ge k}\sup\limits_{\vec x\in S} \frac{I^+(\vec x)}{\|\vec x\|}, 
\end{equation} for $k\geq 1$. Again, we point out that $\mu_k(\Delta_1^+)$ is not the $k$-th smallest eigenvalue of $\Delta_1^+$, but rather the $k$-th smallest \textbf{min-max} eigenvalue of $\Delta_1^+$. And it should be noted that $\mu_1(\Delta_1^+)$ coincides with the smallest eigenvalue of $\Delta_1^+$. }

Based upon the spectral theory for signless 1-Laplacians in  \cite{SYZ}, we shall further study higher order eigenpairs of 1-Laplacians. 

\vspace{0.17cm}

\vskip 0.3cm
\begin{definition}\label{def:nd}
The {\color{black}support domains of a vector $\vec x=(x_1, x_2, \ldots, x_n)$} 
are defined to be the maximal connected components of the {\color{black}support set $V(\vec x):=\{i\in V:x_i\ne 0\}$}.
\end{definition}


To see the difference between {\color{black}the nodal domains and the support domains}, let us consider a connected bipartite graph. In this case, {\color{black}$\mu_1(\Delta_1^+)=0$}, according to the usual definition for positive and negative nodal domains, 
any eigenvector {\color{black}corresponding to $\mu_1(\Delta_1^+)$} has $n$ nodal domains, while it has only $1$ {\color{black}support domain}. 
The reason, we prefer to use the {\color{black}support domain} for signless 1-Laplacian rather than the {\color{black}nodal domain}, can be seen from the following  Courant-type nodal domain theorem for $\Delta^+_1 $.

\begin{proposition}\label{pro:ternarynu}
Suppose $(\mu^+,\vec x)$ is an eigenpair of the signless graph
1-Laplacian and $D_1,\ldots,D_k$ are {\color{black}support} domains of $\vec x$. Let
$\vec x^i$ and $\hat{\vec x}^i$ be defined as
\[
x^i_j=\begin{cases}
\frac{x_j}{\sum_{j\in D_i(\vec x)}d_j|x_j|}, & \text{ if } \;\ j\in D_i(\vec x), \\
0, & \text{ if } \;\ j\not\in D_i(\vec x),\\
\end{cases}\]
and
\[\hat{x}^i_j=\begin{cases}
\frac{1}{\sum_{j\in D_i(\vec x)}d_j}, & \text{ if } \;\ j\in D_i(\vec x) \text{ and } x_j>0,\\
\frac{-1}{\sum_{j\in D_i(\vec x)}d_j},& \text{ if } \;\ j\in D_i(\vec x) \text{ and } x_j<0,\\
0, & \text{ if } \;\ j\not\in D_i(\vec x),\\
\end{cases}
\]
for $j=1,2,\cdots,n$ and $i=1,2,\cdots, k$. Then both $(\mu^+,\vec x^i)$ and $(\mu^+,\hat{\vec x}^i)$ are eigenpairs, too.
\end{proposition}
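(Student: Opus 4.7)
The plan is to argue directly in the coordinate form of the signless 1-Laplacian eigenequation: fix witnesses $z_{jk}\in\mathrm{Sgn}(x_j+x_k)$ (with $z_{jk}=z_{kj}$) and $s_j\in\mathrm{Sgn}(x_j)$ realizing $\sum_{k:\{j,k\}\in E} z_{jk}=\mu^+ d_j s_j$ for every $j\in V$, and to show that the \emph{same} family $\{z_{jk}\}$, $\{s_j\}$ certifies $(\mu^+,\vec x^i)$ and $(\mu^+,\hat{\vec x}^i)$ as eigenpairs. The prescribed $L^1_\mu$-normalization in the statement is cosmetic, since the eigenequation is $0$-homogeneous in $\vec x$ on its support.

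The structural ingredient underpinning everything is the following observation: by Definition \ref{def:nd}, $D_i$ is a \emph{maximal} connected component of $D(\vec x)=\{j:x_j\neq 0\}$, hence no edge can join $D_i$ to another nodal domain $D_{i'}$; consequently every edge $\{j,k\}$ with $j\in D_i$ either lies inside $D_i$ or crosses to the zero set, in which case $x_k=0$. Using this, I verify the sign constraint $z_{jk}\in\mathrm{Sgn}(x^i_j+x^i_k)$ in three cases: (i) if $j,k\in D_i$, then $x^i_j+x^i_k$ is a positive multiple of $x_j+x_k$; (ii) if $j\in D_i$ but $k\notin D_i$, then $x_k=0$ by the structural lemma, so $\mathrm{Sgn}(x^i_j+x^i_k)=\mathrm{Sgn}(x_j)=\mathrm{Sgn}(x_j+x_k)\ni z_{jk}$; (iii) if $j,k\notin D_i$, then $x^i_j+x^i_k=0$ and $\mathrm{Sgn}(0)=[-1,1]$ absorbs any $z_{jk}$. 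The compatibility $s_j\in\mathrm{Sgn}(x^i_j)$ is identical for $j\in D_i$ and follows from $\{\pm 1\}\subset[-1,1]$ for $j\notin D_i$. The balance equation $\sum_k z_{jk}=\mu^+ d_j s_j$ is literally unchanged, completing the argument for $\vec x^i$.

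For the ternary vector $\hat{\vec x}^i$ the same witnesses continue to work. The only new phenomenon arises for edges $\{j,k\}\subseteq D_i$ with $x_j x_k<0$ and $x_j+x_k\neq 0$: here $\hat x^i_j+\hat x^i_k=0$ (the magnitudes are equalized), so $\mathrm{Sgn}(\hat x^i_j+\hat x^i_k)=[-1,1]$ is strictly looser than $\mathrm{Sgn}(x_j+x_k)$ and still accommodates $z_{jk}$; all remaining cases are handled verbatim as for $\vec x^i$. The main obstacle I anticipate is establishing the structural lemma about the absence of cross-edges between distinct nodal domains --- this is precisely the reason why Definition \ref{def:nd} (support-component nodal domains, rather than the sign-based ones used for the classical graph 1-Laplacian) is the correct notion here, since otherwise the subgradient witnesses on the internal boundary between positive and negative subregions of a nodal domain could not be consistently restricted and the Courant-type statement would fail.
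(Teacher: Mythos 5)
Your proposal is correct and follows essentially the same route as the paper: the paper's proof also fixes the original witnesses and observes that $\sgn(\hat{x}^i_j)\supset \sgn(x^i_j)\supset \sgn(x_j)$ and $\sgn(\hat{x}^i_{j'}+\hat{x}^i_j)\supset \sgn(x^i_{j'}+x^i_j)\supset \sgn(x_{j'}+x_j)$ for $j'\sim j$, then invokes Lemma~2.2 of \cite{SYZ} to conclude. You merely make explicit the structural fact (no edges between distinct nodal domains, hence $x_k=0$ for neighbours outside $D_i$) that the paper leaves inside its ``directly verified'' inclusions, which is a welcome amount of extra detail but not a different argument.
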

\begin{proof}
It can be directly verified that $\sgn(\hat{x}^i_j)\supset \sgn(x^i_j)\supset \sgn(x_j)$ and $\sgn(\hat{x}^i_{j'}+\hat{x}^i_j)\supset  \sgn(x^i_{j'}+x^i_j) \supset  \sgn(x_{j'}+x_j)$, $j'\sim j$, $j=1,2,\cdots,n$, $i=1,2,\cdots,k$. Then, following the proof of Lemma 2.2 in \cite{SYZ}, 
we complete the proof.
\end{proof}

This property means that the sign-preserved characteristic function on any {\color{black}support}  domain of an eigenvector of $\Delta_1^+$ is again an eigenvector with the same eigenvalue.  
The {\color{black}Courant-type} nodal domain theorem for the signless $1$-Laplacian reads
\begin{theorem}\label{th:nodal-domain}
Let $\vec x^k$ be an eigenvector with eigenvalue ${\color{black}\mu_k(\Delta_1^+)}$ and multiplicity $r$,  and let $S(\vec x^k)$ be the number of {\color{black}support} domains of $\vec x^k$. Then we have
$$1\le  S(\vec x^k)\le k+r-1.$$
\end{theorem}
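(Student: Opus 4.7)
The plan is to split the proof into the trivial lower bound $S(\vec x^k)\ge 1$---which follows at once because $\vec x^k\ne \vec 0$ forces $D(\vec x^k)$ to contain at least one maximal connected component---and a genus-based contradiction for the upper bound, adapted from the classical Courant nodal argument but tailored to the Lusternik--Schnirelman variational eigenvalues of $\Delta_1^+$ in the spirit of \eqref{eq:1-Lap-minmax}.

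For the upper bound I would argue by contradiction. Suppose $s:=S(\vec x^k)\ge k+r$. By Proposition \ref{pro:ternarynu}, each nodal domain $D_i$ of $\vec x^k$ yields an eigenvector $\hat{\vec x}^i$ with eigenvalue $c_k$, and the pairwise disjoint supports make $\hat{\vec x}^1,\ldots,\hat{\vec x}^s$ linearly independent. Set $W:=\Span(\hat{\vec x}^1,\ldots,\hat{\vec x}^{k+r})$, a subspace of dimension $k+r$, and consider $\Sigma_W:=\{\vec y\in W:\|\vec y\|_{1,\mu}=1\}$; assuming (without loss of generality) no vertex is isolated, $\|\cdot\|_{1,\mu}$ restricts to a norm on $W$, so $\Sigma_W$ is a centrally symmetric set homeomorphic to $\mathbb{S}^{k+r-1}$, and $\gen(\Sigma_W)=k+r$.

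The crucial step is to verify that $R(\vec y):=I^+(\vec y)/\|\vec y\|_{1,\mu}\equiv c_k$ on $\Sigma_W$. Writing $\vec y=\sum_{i=1}^{k+r}a_i\hat{\vec x}^i$, I would exploit the fact that, by Definition \ref{def:nd}, the $D_i$ are connected components of $D(\vec x^k)$, so no edge can join $D_i$ to $D_j$ with $i\ne j$ (both endpoints would be nonzero and adjacent, hence in the same component). Every remaining edge either lies entirely inside a single $D_i$ or has at least one endpoint in $V\setminus D(\vec x^k)$ where every $\hat{\vec x}^i$ vanishes; in both cases the summand $|y_u+y_v|$ reduces to $|a_i|\,|\hat{x}^i_u+\hat{x}^i_v|$ for a single index $i$. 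Combined with the disjointness of supports for the denominator, this yields
\[
I^+(\vec y)=\sum_{i=1}^{k+r}|a_i|\,I^+(\hat{\vec x}^i),\qquad \|\vec y\|_{1,\mu}=\sum_{i=1}^{k+r}|a_i|\,\|\hat{\vec x}^i\|_{1,\mu},
\]
and the eigenpair identity $I^+(\hat{\vec x}^i)=c_k\|\hat{\vec x}^i\|_{1,\mu}$ forces $R(\vec y)=c_k$ for every nonzero $\vec y\in W$.

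To close the argument, the min-max formula for the variational eigenvalues of $\Delta_1^+$ gives $c_{k+r}\le \sup_{\vec y\in\Sigma_W}R(\vec y)=c_k$. But ``multiplicity $r$'' of $c_k$ amounts to $c_k=c_{k+1}=\cdots=c_{k+r-1}<c_{k+r}$, a contradiction, and hence $S(\vec x^k)\le k+r-1$. The main obstacle I anticipate is the careful bookkeeping in the additivity identities above: the support-based Definition \ref{def:nd} of nodal domain is exactly what forbids edges across distinct nodal domains and lets $I^+$ and $\|\cdot\|_{1,\mu}$ decouple cleanly in the $a_i$. An analogous argument under the classical positive/negative definition of nodal domain would break down here, consistent with the bipartite anomaly highlighted just before the theorem.
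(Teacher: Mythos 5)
Your proposal is correct and follows essentially the same route as the paper: normalize the sign-preserved indicator vectors of $k+r$ nodal domains (eigenvectors with eigenvalue $c_k$ by Proposition \ref{pro:ternarynu}), observe that their span meets $X$ in a set of genus $k+r$ on which the Rayleigh quotient does not exceed $c_k$, and conclude $c_{k+r}\le c_k$, contradicting the multiplicity assumption. The only difference is that you prove the exact additivity $I^+(\sum_i a_i\hat{\vec x}^i)=\sum_i|a_i|I^+(\hat{\vec x}^i)$ using that distinct nodal domains are non-adjacent, whereas the paper only needs the subadditive bound $I^+(\sum_i a_i\vec y^i)\le\sum_i|a_i|I^+(\vec y^i)$; both suffice.
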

\begin{proof}
Suppose the contrary, that there exists $\vec x^k=(x_1, x_2, \cdots, x_n)$ such that $S(\vec x^k)\ge k+r$. Let $D_1(\vec x^k)$, $\cdots$, $D_{k+r}(\vec x^k)$ be the {\color{black}support} domains of $\vec x^k$. Let $\vec y^i=(y^i_1, y^i_2, \cdots, y^i_n),$ where
\[
y^i_j=\begin{cases}
\frac{1}{\sum_{j\in D_i(\vec x^k)}d_j}, & \text{ if } \;\ j\in D_i(\vec x^k) \text{ and } x_j>0,\\
\frac{-1}{\sum_{j\in D_i(\vec x^k)}d_j},& \text{ if } \;\ j\in D_i(\vec x^k) \text{ and } x_j<0,\\
0, & \text{ if } \;\ j\not\in D_i(\vec x^k) \text{ or } x_j=0,\\
\end{cases}
\]
for $i=1,2,\cdots, k+r$, $j=1,2,\cdots,n$. By the construction, we have:

(1) The {\color{black}support} domain of $\vec y^i$ is the $i$-th {\color{black}support} domain of
$\vec x^k$, i.e., $D(\vec y^i)=D_i(\vec x^k)$,

(2) $D(\vec y^i)\cap D(\vec y^j)=\varnothing,$

(3) By Proposition \ref{pro:ternarynu}, $\vec y^1,\cdots,\vec y^{k+r}$ are all ternary eigenvectors with the same eigenvalue ${\color{black}\mu_k(\Delta_1^+)}$.

Now $\forall\, \vec x=\sum\limits_{i=1}^{k+r}a_i\vec y^i\in X, \forall\, v\in V, \,\exists$ unique $j$ such that  $x_v=a_jy^j_v$. Hence, $|x_v|=\sum_{j=1}^{k+r}|a_j||y^j_v|$. Since $\vec x\in X$, $\vec y^j\in X$, $j=1,\cdots,k+r$, we have
$$
1=\sum_{v\in V}d_v|x_v|=\sum_{v\in V}d_v\sum_{j=1}^{k+r}|a_j||y^j_v|
=\sum_{j=1}^{k+r}|a_j|\sum_{v\in V}d_v|y^j_v|=\sum_{j=1}^{k+r}|a_j|.
$$
Therefore, for any $\vec x\in \mathrm{span}(\vec y^1,\cdots,\vec y^{k+r})\cap X$, we have
\begin{align*}
I^+(\vec x)&=\sum_{u\sim v} |x_u+x_v|
\le \sum_{u\sim v} \sum_{i=1}^{k+r}|a_i||y^i_u+y^i_v|
\\&= \sum_{i=1}^{k+r}|a_i|\sum_{u\sim v}|y^i_u+y^i_v|
= \sum_{i=1}^{k+r}|a_i| I^+(\vec y^i)\le \max\limits_{i=1,2,\cdots,k+r}I^+(\vec y^i).
\end{align*}
Note that $\vec y^1,\cdots,\vec y^{k+r}$ are non-zero orthogonal vectors, so $\mathrm{span}(\vec y^1,\cdots,\vec y^{k+r})$ is a $k+r$ dimensional linear space. It follows that $\mathrm{span}(\vec y^1,\cdots,\vec y^{k+r})\cap X$ is a symmetric set which is {\color{black}homeomorphism to the unit sphere $\mathbb{S}^{k+r-1}$ of dimension $k+r-1$}. Obviously, ${\color{black}\gen}(\mathrm{span}(\vec y^1,\cdots,\vec y^{k+r})\cap X)=k+r$. Therefore, we derive that
\begin{align*}
{\color{black}\mu_{k+r}(\Delta_1^+)}&=\inf\limits_{{\color{black}\gen}(A)\ge k+r}\sup\limits_{\vec x\in A} I^+(\vec x)
\\&\le \sup\limits_{\vec x\in \mathrm{span}(\vec y^1,\cdots,\vec y^{k+r})\cap X} I^+(\vec x)
\\&=\max\limits_{i=1,\cdots,k+r} I^+(\vec y^i)
={\color{black}\mu_k(\Delta_1^+)}.
\end{align*}
It contradicts with ${\color{black}\mu_k(\Delta_1^+)<\mu_{k+r}(\Delta_1^+)}$.
\end{proof}

In \cite{SYZ}, the authors initiate the study of $\widehat{\Delta}_1^+$-eigenproblem:
\begin{equation}\label{eq:hat-Delta_1}
\vec0\in\Delta_1^+\vec x-{\color{black}\mu\cdot}(\Delta_1^+\vec x+\Delta_1\vec x)=(1-{\color{black}\mu})\Delta_1^+\vec x-{\color{black}\mu}\Delta_1\vec x
\end{equation}
and prove that  
\begin{align} 
	1-\widehat{h}^+(G) &=\min\limits_{\vec x\in\R^n\backslash\{\vec 0\}}\frac{I^+(\vec x)}{I^+(\vec x)+I(\vec x)}, \label{eq:mdc-discrete}
	\end{align}
 which yields that the first eigenvalue of $\widehat{\Delta}_1^+$ 
 equals $1-\widehat{h}^+(G) $. 
 
{\color{black}Again, parallel to \eqref{eq:1-Lap-minmax} and \eqref{eq:1-Lap+minmax}, we define the min-max eigenvalues of $\widehat{\Delta}_1^+$ as follows
\[{\color{black}\mu_k}(\widehat{\Delta}_1^+):=\inf_{{\color{black}\gen}(T)\ge k} \max_{\vec x\in T\subseteq X} \frac{I^+(\vec  x)}{I^+(\vec x)+I(\vec x)},\;\; k\ge 1.\]
}
We can also derive a Courant-type nodal domain inequality for $\widehat{\Delta}_1^+$ which is an analog for Theorem \ref{th:nodal-domain}. 
Precisely, we have:

\begin{proposition}\label{pro:nodal-domain-hat1Lap}
Let $\vec x^k$ be an eigenvector corresponding to 
{\color{black}$\mu_k(\widehat{\Delta}_1^+)$}  whose multiplicity is $r$. Then the number of {\color{black}support} domains of $\vec x^k$ does not exceed $k+r-1$.
\end{proposition}

Also, it is not difficult to verify 
\begin{proposition}
 For any $k=1,2,\cdots,n$,  $${\color{black}\mu_k}(\widehat{\Delta}_1^+)\le {\color{black}\mu_k(\Delta_1^+)}\le 2{\color{black}\mu_k}(\widehat{\Delta}_1^+).$$ 
\end{proposition}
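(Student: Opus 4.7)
The plan is to prove both inequalities by a single elementary pointwise estimate on the constraint sphere $X=\{\vec x:\sum_i d_i|x_i|=1\}$ (which is the common domain of both variational formulas) and then propagate it through the inf-sup in the definition of $c_k$ and $c_k(\widehat{\Delta}_1^+)$.

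The key pointwise identity is $|a+b|+|a-b|=2\max(|a|,|b|)$, which gives
\[
I^+(\vec x)+I(\vec x)=2\sum_{\{i,j\}\in E}w_{ij}\max(|x_i|,|x_j|).
\]
Sandwiching $\max(|x_i|,|x_j|)$ between $\tfrac12(|x_i|+|x_j|)$ and $|x_i|+|x_j|$, and summing, yields
\[
\sum_{i\in V} d_i|x_i|\;\le\;I^+(\vec x)+I(\vec x)\;\le\;2\sum_{i\in V} d_i|x_i|.
\]
On $X$ the middle quantity lies in $[1,2]$, so for every $\vec x\in X$ we get
\[
\frac{I^+(\vec x)}{I^+(\vec x)+I(\vec x)}\;\le\;I^+(\vec x)\;\le\;2\cdot\frac{I^+(\vec x)}{I^+(\vec x)+I(\vec x)}.
\]

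To finish, I would note that both $c_k$ and $c_k(\widehat{\Delta}_1^+)$ are defined as $\inf_{\gamma(T)\ge k}\sup_{\vec x\in T}(\cdot)$ ranging over the \emph{same} class of centrally symmetric compact subsets $T\subseteq X$ of Krasnoselskii genus at least $k$. The pointwise two-sided bound is preserved under $\sup_{\vec x\in T}$ and then under $\inf_{\gamma(T)\ge k}$, giving simultaneously $c_k(\widehat{\Delta}_1^+)\le c_k$ and $c_k\le 2\,c_k(\widehat{\Delta}_1^+)$.

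There is no real obstacle here; the statement is essentially a coercivity comparison between the two Rayleigh-type quotients. The only minor point worth stating carefully is that both min-max families use the same sphere $X$ — this is implicit in the definition of $c_k(\widehat{\Delta}_1^+)$ written in the excerpt (where $T\subseteq X$ is specified) and in the preceding Lusternik–Schnirelman setup for $\Delta_1^+$ from \cite{SYZ}, so the inf-sup comparison is legitimate.
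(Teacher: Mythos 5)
Your proof is correct. The paper itself omits the verification (it only remarks that the proposition ``is not difficult to verify''), and your argument supplies exactly the natural one: the identity $|a+b|+|a-b|=2\max\{|a|,|b|\}$ gives $\sum_i d_i|x_i|\le I^+(\vec x)+I(\vec x)\le 2\sum_i d_i|x_i|$, hence $1\le I^+(\vec x)+I(\vec x)\le 2$ on the common constraint sphere $X$, and the resulting pointwise two-sided bound between the two Rayleigh quotients passes through $\sup$ and $\inf$ unchanged. This is consistent with the paper's own toolkit (the same identity appears in Section~\ref{sec:new-Cheeger}), so there is nothing to add.
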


The $k$-way Cheeger constant, defined by Miclo \cite{Miclo08}, 
generalizes the Cheeger constant of a graph. In their seminal work, Lee, Gharan and Trevisan \cite{LGT12} proved the $k$-way Cheeger inequality, which not only solves an open problem proposed by Miclo, but also brings  the powerful tool of random metric partitions developed originally in theoretical computer science. 
As a dual analog,  the $k$-way dual Cheeger constant  introduced in \cite{Liu15}, 
\begin{equation}\label{eq:k-waydualCheeger}
h^+_k:=\max\limits_{\text{ disjoint } (V_1,V_2),\ldots,(V_{2k-1},V_{2k})}\min\limits_{1\le i\le k}\frac{2|E(V_{2i-1},V_{2i})|}{\vol(V_{2i-1}\cup V_{2i})}
\end{equation}
is a direct generalization of the dual Cheeger constant. And similarly, there is a $k$-way dual Cheeger inequality which is a `dual' version of Lee-Gharan-Trevisan's Theorem \cite{Liu15}. Below, we show another $k$-way dual Cheeger inequality regarding the spectrum of the singless 1-Laplacian.
 \begin{theorem}\label{thm:k-way-dual-Cheeger}
{\color{black}Suppose that $\vec x$ is an eigenvector corresponding to $\mu_k(\Delta_1^+)$}. 
Then
$$  1-h^+_{\textsl{S}(\vec x)}\le {\color{black}\mu_k(\Delta_1^+)}\le 1-h^+_k,\,\,\,\,\,\,\,\forall\, k. $$
\end{theorem}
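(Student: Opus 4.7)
The strategy is to prove the two inequalities separately, in both cases by evaluating the Rayleigh quotient $I^+(\cdot)/\|\cdot\|$ on a family of ternary test vectors associated with disjoint vertex pairs. Throughout I will use the Euler-type identity $I^+(\vec y)=\mu\|\vec y\|$ valid for every eigenpair $(\mu,\vec y)$ of $\Delta_1^+$, which follows from $1$-homogeneity of $I^+$ together with $\langle\vec v,\vec y\rangle=I^+(\vec y)$ for $\vec v\in\partial I^+(\vec y)$.

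For the upper bound $c_k\le 1-h_k^+$, I fix any admissible family of $k$ disjoint pairs $(V_{2i-1},V_{2i})$ and form
\[
\vec y^i:=\vol(V_{2i-1}\cup V_{2i})^{-1}\bigl(\vec 1_{V_{2i-1}}-\vec 1_{V_{2i}}\bigr),\qquad i=1,\ldots,k.
\]
A direct edge-by-edge count shows $\|\vec y^i\|=1$ and, since edges inside $V_{2i-1}\cup V_{2i}$ that cross the pair contribute zero while the others sum to the volume minus $2|E(V_{2i-1},V_{2i})|$,
\[
I^+(\vec y^i)=1-\frac{2|E(V_{2i-1},V_{2i})|}{\vol(V_{2i-1}\cup V_{2i})}.
\]
The disjoint supports make the $\vec y^i$ linearly independent, and the map $(a_1,\ldots,a_k)\mapsto\sum_i a_i\vec y^i$ is a centrally symmetric homeomorphism from $\{\sum_i|a_i|=1\}$ onto $T:=\Span(\vec y^1,\ldots,\vec y^k)\cap X$, so $T$ is homeomorphic to $S^{k-1}$ and has genus $k$. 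For any $\vec x=\sum_i a_i\vec y^i\in T$, disjointness gives $\sum_i|a_i|=1$, and the triangle inequality applied edgewise yields $I^+(\vec x)\le\sum_i|a_i|I^+(\vec y^i)\le\max_i I^+(\vec y^i)$. Feeding $T$ into the min-max defining $c_k$ and then infimizing over the pair configuration gives $c_k\le 1-h_k^+$.

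For the lower bound $1-h_{S(\vec x)}^+\le c_k$, let $D_1,\ldots,D_{S(\vec x)}$ be the nodal domains of the given eigenvector $\vec x$, and let $\hat{\vec x}^s$ be the sign-preserving ternary vector supplied by Proposition \ref{pro:ternarynu}, so each $(c_k,\hat{\vec x}^s)$ is itself an eigenpair of $\Delta_1^+$. Setting $V_{2s-1}:=\{j\in D_s:x_j>0\}$ and $V_{2s}:=\{j\in D_s:x_j<0\}$, the same edge-count performed above now gives $I^+(\hat{\vec x}^s)/\|\hat{\vec x}^s\|=1-2|E(V_{2s-1},V_{2s})|/\vol(V_{2s-1}\cup V_{2s})$, which by the Euler identity equals $c_k$. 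Thus every one of the $S(\vec x)$ pairs realizes the common ratio $1-c_k$, and since the $D_s$ are automatically disjoint, this configuration is admissible in the max-min defining $h_{S(\vec x)}^+$, yielding $h_{S(\vec x)}^+\ge 1-c_k$.

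The main delicate point I anticipate is verifying that $T$ really attains genus $k$, since $X$ is a polyhedral unit sphere rather than a smooth one; the disjoint-support assumption handles this cleanly through the explicit homeomorphism noted above. A secondary subtlety is the degenerate nodal domain in which $V_{2s-1}$ or $V_{2s}$ is empty: there $|E(V_{2s-1},V_{2s})|=0$, the formula forces $c_k=1$, and the configuration still counts toward $h_{S(\vec x)}^+$ with admissible ratio $0$, making the bound trivially valid.
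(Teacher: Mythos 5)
Your proof is correct and follows essentially the same route as the paper: the upper bound via the genus-$k$ span of normalized ternary indicator vectors of the disjoint pairs together with the edgewise triangle inequality, and the lower bound via the nodal-domain restrictions being eigenvectors with the same eigenvalue (Proposition \ref{pro:ternarynu}) so that each pair $(D_s^+,D_s^-)$ realizes the ratio $1-c_k$. The only difference is that you make explicit, via the Euler-type identity $I^+(\vec y)=\mu\|\vec y\|$, the step the paper states as ``it can be verified,'' which is a welcome but not structurally different addition.
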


\begin{proof}
Given $\{(V_{2i-1},V_{2i})\}_{i=1}^k$, we define the  $k$-dimensional linear subspace $E_k=\mathrm{span}\{\hat{\vec 1}_{V_{2i-1},V_{2i}}\,|\,1\le i\le k\}$. Then, 
${\color{black}\gen}(E_k\cap X)=k$, and hence
\begin{align*}
{\color{black}\mu_k(\Delta_1^+)}&=\inf_{{\color{black}\gen}(A)\ge k} \max_{\vec x\in A} I^+(\vec x)\\
&\le \min_{\{(V_{2i-1},V_{2i})\}_{i=1}^k}\max_{\vec x\in E_k\cap X} I^+(\vec x)\\
&=\min_{\{(V_{2i-1},V_{2i})\}_{i=1}^k}\max_{\sum^k_{i=1}|a_i|=1}I^+(\sum^k_{i=1}a_i\hat{\vec 1}_{V_{2i-1},V_{2i}})\\
&\le \min_{\{(V_{2i-1},V_{2i})\}_{i=1}^k}\max_{\sum^k_{i=1}|a_i|=1}\sum^k_{i=1}|a_i|I^+(\hat{\vec 1}_{V_{2i-1},V_{2i}})\\
&\le \min_{\{(V_{2i-1},V_{2i})\}_{i=1}^k}\max_{1\le i \le k}I^+(\hat{\vec 1}_{V_{2i-1},V_{2i}})
=1-h_k.
\end{align*}
Let $D_1,\cdots,D_m$ be the {\color{black}support} domains of an eigenvector $\vec x$ corresponding to ${\color{black}\mu_k(\Delta_1^+)}$. Suppose $D_i=D_i^+\cup D_i^-$ where $D_i^\pm=\{j\in D_i:\pm x_j>0\}$, $i=1,\cdots,m$. Then $S(\vec x)=m$, and it  can be verified that 
$$1-\frac{2|E(D_i^+,D_i^-)|}{\vol(D_i^+\cup D_i^-)}=I^+(\hat{\vec 1}_{D_i^+,D_i^-})={\color{black}\mu_k(\Delta_1^+)}.$$
Consequently, 
$$1-h_m\le \max\limits_{i=1,\cdots,m}\left(1-\frac{2|E(D_i^+,D_i^-)|}{\vol(D_i^+\cup D_i^-)}\right)={\color{black}\mu_k(\Delta_1^+)}.$$
\end{proof}

\begin{itemize}
    \item Theorem \ref{thm:k-way-dual-Cheeger} shows an easy proof of the Courant-type nodal domain  theorem. Indeed, if $S(\vec x)\ge k+r$, then ${\color{black}\mu_{k+r}(\Delta_1^+)}\le 1-h_{k+r}^+\le 1- h^+_{\textsl{S}(\vec x)}\le {\color{black}\mu_k(\Delta_1^+)}$, which leads to a contradiction with the assumption that ${\color{black}\mu_k(\Delta_1^+)}<{\color{black}\mu_{k+r}(\Delta_1^+)}$.
    
    \item Taking $k=1$ in Theorem \ref{thm:k-way-dual-Cheeger},  by $S(\vec x)\ge 1$, we have  $1-h_1^+={\color{black}\mu_1(\Delta_1^+)}$, which is nothing but the fundamental identity relating the first eigenvalue and the dual Cheeger constant. Moreover, if $S(\vec x)=m$, then $h_1^+=\cdots=h_m^+$ and ${\color{black}\mu_1(\Delta_1^+)=\cdots=\mu_m(\Delta_1^+)}$.  
    \item Theorem \ref{thm:k-way-dual-Cheeger} is a dual version of the $k$-way Cheeger inequality (Theorem 8 in \cite{refCSZ-Adv}).
\end{itemize}

 We present a higher order inequality linking the min-max eigenvalues $\{{\color{black}\mu_k(\Delta_1^+)}\}_{k=1}^n$ of $\Delta_1^+$, and the eigenvalues $\{{\color{black}\lambda_k(\Delta_2)}\}_{k=1}^n$ of the normalized Laplacian $\Delta_2$: 
 \begin{theorem}\label{thm:2Lap-s1Lap}{\color{black}
$$  \frac{\mu_k^2(\Delta_1^+)}{2}\le 2-\lambda_{n-k+1}(\Delta_2)\le 2\mu_k(\Delta_1^+),\,\,\,\,\,\,\,\forall\, k. $$}
\end{theorem}

\begin{proof}
{\color{black}Suppose that $(\mu_k,\vec y)$ is an eigenpair of  $\Delta_1^+$. Let $\vec x=(x_1,\cdots,x_n)$ be defined via $x_i:=\sqrt{|y_i|}\mathrm{sgn}(y_i)$, $\forall i=1,\cdots,n$.} 
The key inequalities for proving Theorem \ref{thm:2Lap-s1Lap} are
\[ 2-\frac{\sum_{\{i,j\}\in E}\left(x_i-x_j\right)^2}{\sum_{i\in V}d_ix_i^2}\le 2\frac{\sum_{\{i,j\}\in E}\left|x_i^{2}\mathrm{sign}(x_i)+x_j^2\mathrm{sign}(x_j)\right|}{\sum_{i\in V}d_ix_i^2}\]
and \[\frac12\left(\frac{\sum_{\{i,j\}\in E}\left|x_i^{2}\mathrm{sign}(x_i)+x_j^2\mathrm{sign}(x_j)\right|}{\sum_{i\in V}d_ix_i^2}\right)^2\le2-\frac{\sum_{\{i,j\}\in E}\left(x_i-x_j\right)^2}{\sum_{i\in V}d_ix_i^2}\]
where $\mathrm{sign}(x_i)$ indicates the usual signature of $x_i$.  
Their proofs are elementary, and the rest proof is the same as that of Theorem 1.1 in \cite{Zhang23+}, and thus we omit the details.
\end{proof}

\begin{remark}\label{rem:dual-Cheeger-inequ}
Combining Theorems \ref{thm:k-way-dual-Cheeger} and \ref{thm:2Lap-s1Lap}, we immediately obtain
\[\frac{(1-h^+_{\textsl{S}(\vec x)})^2}{2}\le 2-\lambda_{n-k+1}(\Delta_2)\le 2(1-h^+_k)\]
which is actually an analog of 
the main theorem in \cite{Liu15}. Here, $\vec x$ can be taken as any eigenvector corresponding to ${\color{black}\mu_k(\Delta_1^+)}$.
\end{remark}

The next theorem establishes the equality case for Theorem \ref{thm:k-way-dual-Cheeger}  on special graphs. 
\begin{theorem}
Let $G$ be a forest graph. Then 
$${\color{black}\mu_k(\Delta_1^+)}=1-h_k^+,\;k=1,\cdots,n.$$
\end{theorem}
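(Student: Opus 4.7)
The inequality $c_k \le 1 - h_k^+$ is already provided by Theorem \ref{thm:k-way-dual-Cheeger}, so the task is to establish the reverse inequality $c_k \ge 1 - h_k^+$ when $G$ is a forest.

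The plan is to exploit a sharp nodal count together with a monotonicity observation. First I would note that $h_k^+$ is non-increasing in $k$: enlarging a family of disjoint pairs only adds terms to the minimum in \eqref{eq:k-waydualCheeger}, so the max-min value can only shrink. Hence $1 - h_k^+$ is non-decreasing in $k$. Next, the left half of Theorem \ref{thm:k-way-dual-Cheeger} gives $1 - h_{S(\vec x)}^+ \le c_k$ for any eigenvector $\vec x$ of $\Delta_1^+$ at eigenvalue $c_k$. Once one can exhibit such an $\vec x$ with $S(\vec x) \ge k$, the monotonicity just noted yields
\[
1 - h_k^+ \;\le\; 1 - h_{S(\vec x)}^+ \;\le\; c_k,
\]
which together with the upper bound from Theorem \ref{thm:k-way-dual-Cheeger} closes the loop.

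The remaining step, producing an eigenvector of $c_k$ on a forest with at least $k$ nodal domains, is the crux and is precisely where the forest hypothesis enters. The plan is to invoke the sharp nodal count theory of \cite{DPT23}, which on trees asserts that the Courant bound in Theorem \ref{th:nodal-domain} is attained from below (and not merely from above) within the eigenspace of $c_k$. Proposition \ref{pro:ternarynu} supplies the closure property required for the underlying tree-surgery/induction, since restriction of an eigenvector to a single nodal domain still produces an eigenvector at the same eigenvalue; acyclicity is what rules out the cancellations on cycles that normally make the Courant bound strict.

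The main obstacle will be transferring the machinery of \cite{DPT23}, originally developed for the graph $1$-Laplacian $\Delta_1$ under the alternating sign condition $z_{ij}\in\mathrm{Sgn}(x_i-x_j)$, into the signless setting where $z_{ij}\in\mathrm{Sgn}(x_i+x_j)$ and where nodal domains are the connected components of the support $\{i:x_i\neq 0\}$ (Definition \ref{def:nd}) rather than of the strictly positive and strictly negative parts. Bipartite components of the forest will demand particular care, since on them $\Delta_1^+$ behaves quite differently from $\Delta_1$; in that case one should expect to use the sign assignment naturally induced by the bipartition to build the required ternary eigenvectors whose independence in the Krasnoselski-genus sense underlies the variational definition of $c_k$.
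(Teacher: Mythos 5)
Your reduction is logically sound as far as it goes: the upper bound $c_k\le 1-h_k^+$ is indeed the right half of Theorem \ref{thm:k-way-dual-Cheeger}, the monotonicity of $k\mapsto h_k^+$ is correct, and the left half of that theorem would close the argument \emph{provided} you can exhibit an eigenvector $\vec x$ of $c_k$ with $S(\vec x)\ge k$ nodal domains in the sense of Definition \ref{def:nd}. But that crux step, which you defer entirely to a hoped-for transfer of \cite{DPT23}, is not merely unproven -- it is false as stated. Take $G=K_{1,3}$ (a tree) and $k=4$: since the centre is adjacent to every leaf, the support of any vector induces a subgraph with at most $3$ connected components, so no eigenvector of $c_4$ can have $S(\vec x)\ge 4$. (The theorem still holds there because $h_3^+=h_4^+=0$, but your stated sufficient condition cannot be met, and you give no fallback for such degenerate $k$.) Moreover, \cite{DPT23} concerns the ordinary $1$-Laplacian and sign-based nodal domains; under the bipartition signing $y_i=\sigma_i x_i$ that relates $\Delta_1^+$ to $\Delta_1$ on a forest, two adjacent sign-nodal domains of $\vec y$ of opposite sign merge into a single support component of $\vec x$, so a count of $k$ on the $\Delta_1$ side does not yield $S(\vec x)\ge k$ on the $\Delta_1^+$ side. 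You correctly flag this transfer as ``the main obstacle,'' but flagging it does not discharge it.

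For comparison, the paper avoids nodal counting altogether and proves the identity by chaining three known equalities: on bipartite graphs $h_k^+=1-h_k$ (Proposition 3.1 of \cite{Liu15}); on bipartite graphs $c_k=\lambda_k(\Delta_1)$ (Theorem 2.7 of \cite{SYZ}, which is exactly the signing trick made precise at the level of variational eigenvalues); and on forests $\lambda_k(\Delta_1)=h_k$ (the sharp result of \cite{DPT23}, used on the $\Delta_1$ side where it actually lives). Combining gives $c_k=\lambda_k(\Delta_1)=h_k=1-h_k^+$. If you want to salvage your route, the realistic fix is to replace your nodal-count step with precisely this chain, i.e., to use \cite{DPT23} only for $\Delta_1$ and import the conclusion to $\Delta_1^+$ through the eigenvalue identity rather than through an eigenvector-level nodal-domain statement.
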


\begin{proof}
First, since every forest graph is bipartite, it follows from Proposition 3.1 in \cite{Liu15} that $h_k^+=1-h_k$. 
On the other hand, by Theorem 2.7 in \cite{SYZ}, we further have ${\color{black}\mu_k(\Delta_1)=\mu_k(\Delta_1^+)}$. Finally, by the results in \cite{DPT23}, we know ${\color{black}\mu_k(\Delta_1)}=h_k$ on forests. 

All the three identities hold on forests, and therefore
$${\color{black}\mu_k(\Delta_1^+)}={\color{black}\mu_k(\Delta_1)}=h_k=1-h_k^+.$$
\end{proof}

Thus, for forest, the higher order dual Cheeger inequality presented in Remark \ref{rem:dual-Cheeger-inequ} can be improved as:
\begin{proposition}\label{pro:higher-dual-C}
For any $k=1,\cdots,n$, the dual Cheeger inequality 
\[\frac{(1-h^+_{k})^2}{2}\le 2-\lambda_{n-k+1}(\Delta_2)\le 2(1-h^+_k)\]
    holds on a forest graph.
\end{proposition}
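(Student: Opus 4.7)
The plan is essentially a one-step derivation that simply combines the two results sitting just above the statement. Concretely, I would invoke Theorem \ref{thm:2Lap-s1Lap}, which gives, for any min-max eigenvalue $c_k$ of $\Delta_1^+$, the two-sided bound
\[
\frac{c_k^2}{2}\le 2-\lambda_{n-k+1}\le 2c_k,\qquad k=1,\ldots,n,
\]
and then substitute into it the forest-identity proved immediately before, namely $c_k=1-h_k^+$ for all $k$ when $G$ is a forest. This substitution turns both sides of the inequality above into functions of $1-h_k^+$ and delivers exactly the claimed bound.

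The sequence of steps I would present is: first restate Theorem \ref{thm:2Lap-s1Lap}; second, note that because $G$ is a forest the preceding theorem gives the equality $c_k=1-h_k^+$; third, substitute. To make the writeup self-contained I would also briefly remind the reader why the substitution is legitimate on both sides: the lower bound uses that $c_k\ge 0$ so squaring preserves the inequality, and the upper bound is linear in $c_k$, hence the substitution is immediate. Compared to Remark \ref{rem:dual-Cheeger-inequ}, the improvement on the left side (from $1-h^+_{S(\vec x)}$ to $1-h^+_k$) is precisely what the forest identity $c_k=1-h_k^+$ buys us, since on general graphs one only has $1-h^+_{S(\vec x)}\le c_k\le 1-h_k^+$ and the lower chain need not close up.

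There is no real obstacle: both of the underlying ingredients have already been proved in the paper, one via the reformulation of $c_k$ through Lov\'asz-type extensions and bipartiteness (Proposition 3.1 in \cite{Liu15} plus Theorem 2.7 in \cite{SYZ}) and the nodal-count equality $\lambda_k(\Delta_1)=h_k$ on forests from \cite{DPT23}, and the other via the elementary pointwise inequalities used in the proof of Theorem \ref{thm:2Lap-s1Lap}. The only mild point to be careful about is confirming nonnegativity of the quantities being squared, which follows since $0\le h_k^+\le 1$ and $c_k\ge 0$; once this is noted, the proof closes in a single display.
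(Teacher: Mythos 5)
Your proposal is correct and matches the paper's intended derivation exactly: the paper presents Proposition \ref{pro:higher-dual-C} as an immediate consequence of substituting the forest identity $c_k=1-h_k^+$ (from the preceding theorem) into the two-sided bound of Theorem \ref{thm:2Lap-s1Lap}, which is precisely your one-step argument. Your added remarks on nonnegativity and on how the forest identity closes the gap left by Remark \ref{rem:dual-Cheeger-inequ} are accurate but not needed beyond what the paper already implies.
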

We remark here that, on forest graphs, Proposition \ref{pro:higher-dual-C}  improves Liu's higher order dual Cheeger inequality \cite{Liu15}, where in his original version on general graphs, the factor appearing in  the lower bound is a much smaller number that depends on $k$;  while in our version on forest graphs, the factor is simply $1/2$ which is independent of $k$. 
Besides, it is somehow a reformulation of Miclo's inequality $h_k^2/2\le \lambda_k(\Delta_2)$ on trees \cite{Miclo08,DJM12}. 

\vspace{0.3cm}

Regarding the Laplacian spectrum or adjacency spectrum, we also would like to review Cvetkovi\'{c}'s inertia bound for independence number (recall that a subset of $V$ is called independent if its vertices are mutually non-adjacent; and the  independence number is the cardinality of the largest independent set):
Let $$\mathcal{M}=\{ M=(m_{ij})_{1\leq i,j\leq n} \text{ is a symmetric matrix s.t. $m_{ij}=0$ if $i\nsim j$}\}.$$ 
    Then the independence number $\alpha(G)$  satisfies
    $$\alpha(G)\leq \min_{M\in \mathcal{M}}\min\{n-n^+(M),n-n^-(M)\}$$
where $n^+(M)\left(\text{resp., } n^-(M) \right)$ denotes the number of positive (resp., negative) eigenvalues of the matrix $M$. 
The normalized  Laplacian version of Cvetkovi\'{c}'s inertia bound can be written as
$$\alpha(G)\le \min\{\#\{k:\lambda_k(\Delta_2)\le 1\},\#\{k:\lambda_k(\Delta_2)\ge 1\}\}.$$
However,  both adjacency spectrum and Laplacian spectrum cannot give upper bounds of the independence number by means of  the eigenvalue multiplicity.  While, 1-Laplacian  spectrum can provide a Cvetkovi\'{c}-type bound by the eigenvalue multiplicity \cite{Zhang23+}.

For normalized signless 1-Laplacian, we have a special estimate for the multiplicity of the maximum eigenvalue by means of independence number and edge covering number.
\begin{theorem}\label{thm:multi-of-1}
Let $\alpha$ be the  independence number of a graph $G$. Then,
\begin{enumerate}
\item The multiplicity of the eigenvalue $1$ of the normalized signless 1-Laplacian  $\Delta_1^+$ is at least $\alpha(G)$, and at most the edge covering number. 

\item If $G$ is further assumed to be a bipartite connected graph, then the multiplicity of the eigenvalue $1$ for $\Delta_1^+$ equals $\alpha(G)$.
\end{enumerate}    
\end{theorem}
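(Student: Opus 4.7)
The plan is to work with the variational eigenvalues $c_k=\inf_{\gamma(A)\ge k}\sup_{\vec x\in A}I^+(\vec x)$ on the sphere $X=\{\vec x\in\R^n:\sum_v d_v|x_v|=1\}$, where the multiplicity of an eigenvalue is the length of the plateau of consecutive $c_k$'s equal to it. The basic inequality $|x_i+x_j|\le |x_i|+|x_j|$ edge-by-edge gives $I^+(\vec x)\le 1$ on $X$, with equality iff $x_ix_j\ge 0$ for every edge; hence $c_n=1$ is the top eigenvalue and the eigenspace at this level is $K:=\{\vec x\in X:x_ix_j\ge 0,\,\forall\{i,j\}\in E\}$. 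By the definition of $c_k$ and compactness, one has $c_k=1$ iff every symmetric compact $A\subseteq X$ with $\gamma(A)\ge k$ meets $K$; writing $m^\ast:=\sup\{\gamma(A):A\subseteq X\text{ symmetric compact},\,A\cap K=\varnothing\}$, the multiplicity of $1$ equals $n-m^\ast$, and the task reduces to pinching $m^\ast$ between $n-\rho$ and $n-\alpha$.

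\emph{Lower bound (multiplicity $\ge\alpha$).} Fix a maximum independent set $I=\{v_1,\dots,v_\alpha\}$ and the $\alpha$-dimensional subspace $W=\mathrm{span}\{\vec e_{v_1},\dots,\vec e_{v_\alpha}\}$. Since no edge has both endpoints in $I$, any $\vec x\in W$ satisfies $x_ix_j=0$ on every edge, so $W\cap X\subseteq K$. If $A\subseteq X$ is symmetric compact with $A\cap K=\varnothing$, then $A\cap W=A\cap(W\cap X)=\varnothing$, and the Euclidean orthogonal projection $\pi_{W^\perp}:A\to W^\perp\setminus\{\vec 0\}$ is well-defined, odd and continuous. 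Consequently $\gamma(A)\le\gamma(W^\perp\setminus\{\vec 0\})=n-\alpha$, so $m^\ast\le n-\alpha$.

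\emph{Upper bound (multiplicity $\le\rho$).} Let $M$ be a minimum edge cover, $|M|=\rho$. A short argument shows every component of $M$ is a star, since any path $v_0-v_1-v_2-v_3$ lying in $M$ would allow dropping the middle edge $\{v_1,v_2\}$ without losing coverage, contradicting minimality. Writing $M=T_1\sqcup\cdots\sqcup T_s$, a vertex count gives $n=\sum(|E(T_i)|+1)=\rho+s$, hence $s=n-\rho$. The linear subspace $V_0:=\{\vec x\in\R^n:x_i+x_j=0,\,\forall\{i,j\}\in M\}$ carries one free parameter per star (the center value, with each leaf equal to its negation), so $\dim V_0=s=n-\rho$. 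If $\vec 0\ne\vec x\in V_0\cap K$, then on every $\{i,j\}\in M$ we have $x_i=-x_j$ together with $x_ix_j\ge 0$, forcing $x_i=x_j=0$; since $M$ covers $V$, this yields $\vec x=\vec 0$, a contradiction. Thus $A:=V_0\cap X$ is symmetric compact, disjoint from $K$, with $\gamma(A)=n-\rho$, and $m^\ast\ge n-\rho$.

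For a connected bipartite graph with $n\ge 2$ there are no isolated vertices, so K\"onig's theorem $\tau=\nu$ combined with Gallai's identities $\alpha+\tau=n=\nu+\rho$ gives $\alpha=\rho$; the lower and upper bounds collapse to multiplicity $=\alpha$. The main obstacle, I expect, is the identification of the multiplicity with the genus-theoretic quantity $n-m^\ast$ from the variational definition, together with invoking the star-forest structure of minimum edge covers to pin down $\dim V_0$; once these are in place, the remaining work is the projection argument on one side and an explicit witness subspace on the other.
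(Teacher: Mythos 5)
Your proof is correct, and its overall architecture (genus-based witness subspaces pinching the plateau of variational eigenvalues equal to $1$) matches the paper's. The lower bound is essentially identical in substance: both use the span of the indicator vectors of an independent set, on which $I^+$ is identically $1$ on the sphere; you conclude via the monotonicity of genus under the odd projection onto the orthogonal complement, while the paper invokes the intersection property of genus directly — these are interchangeable. The upper bound is where you genuinely diverge: the paper takes a \emph{maximum matching} of size $\beta$ and exhibits the $\beta$-dimensional span of the vectors $\mathbf{1}_{v_{2i-1}}-\mathbf{1}_{v_{2i}}$, on which the Rayleigh quotient is at most $\max_i\frac{\deg(v_{2i-1})+\deg(v_{2i})-2}{\deg(v_{2i-1})+\deg(v_{2i})}<1$, so $\lambda_\beta<1$ and the plateau has length at most $n-\beta$, which equals the edge covering number only after invoking Gallai's identity. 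You instead take a \emph{minimum edge cover}, prove the star-forest structure, and use the $(n-\rho)$-dimensional subspace $V_0$ of vectors anti-symmetric across each covering edge, which misses the level set $K=\{x_ix_j\ge 0\ \forall\{i,j\}\in E\}$ for the clean algebraic reason $x_i=-x_j$ and $x_ix_j\ge0$ force $x_i=x_j=0$. The two witness subspaces both have dimension $\beta=n-\rho$ but are genuinely different (the paper's vectors vanish off the matching; yours are nowhere zero), and your route lands on the edge covering number directly at the price of the extra star-forest lemma, whereas the paper's is slightly leaner but needs the Gallai conversion at the end. Your reduction to the single quantity $m^*$ is a clean repackaging of the same compactness argument the paper runs twice. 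Both proofs handle the bipartite case by the same K\"onig--Gallai bookkeeping (the paper attributes it to Hall's theorem), and both implicitly assume $G$ has no isolated vertices, as the edge covering number requires.
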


\begin{proof}\begin{enumerate}
    \item We simply write $\alpha:=\alpha(G)$. Let $t$ denote the multiplicity of the eigenvalue $1$ for $\Delta_1^+$. To prove that $t\ge \alpha$, we need to show that ${\color{black}\mu_k(\Delta_1^+)}=1$ for all $k\ge n-\alpha+1$.

Note that in this case, $n-k+1\le \alpha$, then there exist at least $n-k+1$ non-adjacent vertices in $G$. We denote them by $v_1,\ldots,v_{n-k+1}$. Let $\mathbf{1}_v\in \R^n$ be the indicator function of $v$, that is, $(\mathbf{1}_v)_v=1$ and $(\mathbf{1}_v)_i=0$ for all $i\ne v$. Let also
 $$X_{n-k+1}\coloneqq \mathrm{span}( \mathbf{1}_{v_1},\ldots,\mathbf{1}_{v_{n-k+1}}).$$ 
 
Note that, for each $\vec x$ in $X_{n-k+1}$, there exists  $(l_1,\ldots,l_{N-k+1})\ne \mathbf{0}$ such that $$\vec x=l_1\mathbf{1}_{v_1}+\ldots+l_{n-k+1}\mathbf{1}_{v_{n-k+1}}.$$ 
Therefore,  the intersection property of the  genus 
implies 
that $$A\cap X_{n-k+1}=A\cap \mathrm{span}( \mathbf{1}_{v_1},\ldots,\mathbf{1}_{v_{N-k+1}})\ne\emptyset$$ for any $A$ with $\mathrm{genus}(A)\ge k$. It is easy to see that $I^+(\vec x)=\|\vec x\|$, $\forall\vec x\in X_{n+1-k}$.  Hence, $$\sup_{\vec x\in A}\frac{I^+(\vec x)}{\|\vec x\|}\ge
\inf_{\vec x\in X_{n+1-k}}\frac{I^+(\vec x)}{\|\vec x\|}=1,$$  implying that ${\color{black}\mu_k(\Delta_1^+)}=1$. This shows that $t\geq \alpha$.\newline


Let $\{v_1,v_2\}$, $\cdots$, $\{v_{2\beta-1},v_{2\beta}\}$ be $\beta$ edges that realize the maximum matching number. Consider the linear subspace 
$$X=\mathrm{span}( \mathbf{1}_{v_1}-\vec 1_{v_2},\cdots,\mathbf{1}_{v_{2\beta-1}}-\vec 1_{v_{2\beta}}).$$
Then $\dim X=\beta$. And every nonzero vector $\vec x\in X$ is an eigenvector  corresponding to the largest eigenvalue. 
Note that for any centrally symmetric subset $S\subset\R^n$ with ${\color{black}\gen}(S)\ge n-\beta+1$, $S\cap (X\setminus0)\ne\varnothing$. 
Hence, 
\begin{align*}
\mu_{\beta}(\Delta_1^+)&=
\inf\limits_{ S:{\color{black}\gen}(S)\ge \beta}\sup\limits_{\vec y\in S}\frac{I^+(\vec y)}{\|\vec y\|} \le \sup\limits_{\vec x\in X\setminus0}\frac{I^+(\vec x)}{\|\vec x\|}
\\&\le
\max_{1\le i\le \beta} \frac{\deg(v_{2i-1})+\deg(v_{2i})-2}{\deg(v_{2i-1})+\deg(v_{2i})} <1 
\end{align*}
which implies that {\color{black}the number of times 1 appears in the sequence $\mu_1(\Delta_1^+)\le\cdots\le\mu_n(\Delta_1^+)$} is at most $n-\beta=\eta$. 
\item 
In fact, for a bipartite connected  graph $G=(V,E)$, by Hall's marriage theorem, it is easy to see that $\alpha=\eta$. Therefore, we have $t=\alpha=\eta$.   
\end{enumerate}
\end{proof}

{Some of the results on the signless 1-Laplacian can be extended to $\widehat{\Delta}_1^+$. For example,  we also have a sequence of inequalities linking the min-max eigenvalues of $\widehat{\Delta}_1^+$ and a multi-way version of $\hat{h}^+$.}
\begin{proposition}
{\color{black}Suppose that $\vec x$ is an eigenvector corresponding to $\mu_k(\widehat{\Delta}_1^+)$}. 
Then
$$  1-\hat{h}^+_{\textsl{S}(\vec x)}\le {\color{black}\mu_k}(\widehat{\Delta}_1^+)\le 1-\hat{h}^+_k,\,\,\,\,\,\,\,\forall\, k, $$
where 
$$\hat{h}^+_k:=\max\limits_{\text{ disjoint } (V_1,V_2),\ldots,(V_{2k-1},V_{2k})}\min\limits_{1\le i\le k}\frac{2|E(V_{2i-1},V_{2i})|+|E(V_{2i-1}\cup V_{2i},(V_{2i-1}\cup V_{2i})^c)|}{\vol(V_{2i-1}\cup V_{2i})+|E(V_{2i-1}\cup V_{2i},(V_{2i-1}\cup V_{2i})^c)|}$$
\end{proposition}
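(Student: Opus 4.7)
The plan is to parallel the two-sided argument in the proof of Theorem~\ref{thm:k-way-dual-Cheeger}, replacing the signless-1-Laplacian Rayleigh quotient $I^+$ (restricted to the unit sphere $X$) by the scale-invariant ratio $R(\vec x):=I^+(\vec x)/(I^+(\vec x)+I(\vec x))$ associated with $\widehat{\Delta}_1^+$. The key ingredient is the sign-indicator identity, already implicit in \eqref{eq:mdc-discrete}: for a disjoint pair $(V_1,V_2)$ with $V_1\cup V_2\ne\varnothing$, an edge-by-edge count on $\vec y=\vec 1_{V_1}-\vec 1_{V_2}$ yields $I^+(\vec y)+I(\vec y)=\vol(V_1\cup V_2)+|E(V_1\cup V_2,(V_1\cup V_2)^c)|$ and a short algebraic simplification gives $R(\vec y)=1-\hat h^+(V_1,V_2)$.

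For the upper bound $c_k(\widehat{\Delta}_1^+)\le 1-\hat h^+_k$, I would pick disjoint pairs $\{(V_{2i-1},V_{2i})\}_{i=1}^k$ realizing $\hat h^+_k$ and let $\vec y^i=\hat{\vec 1}_{V_{2i-1},V_{2i}}$ be the corresponding normalized vectors in $X$. Their pairwise disjoint supports force $E_k:=\mathrm{span}(\vec y^1,\ldots,\vec y^k)$ to be $k$-dimensional, so $\gamma(E_k\cap X)=k$. For $\vec x=\sum_i a_i\vec y^i$, the sublinearity inequalities $I^+(\vec x)\le\sum_i|a_i|I^+(\vec y^i)$ and $I(\vec x)\le\sum_i|a_i|I(\vec y^i)$, combined with the mediant inequality on the resulting ratios, would yield $R(\vec x)\le\max_i R(\vec y^i)=1-\hat h^+_k$; inserting this into the variational characterization of $c_k(\widehat{\Delta}_1^+)$ then gives the inequality.

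For the lower bound $c_k(\widehat{\Delta}_1^+)\ge 1-\hat h^+_{S(\vec x)}$, let $D_1,\ldots,D_m$ with $m=S(\vec x)$ be the nodal domains (Definition~\ref{def:nd}) of the eigenvector $\vec x$ attached to $c_k(\widehat{\Delta}_1^+)$. The analog of Proposition~\ref{pro:ternarynu} for $\widehat{\Delta}_1^+$, underlying Proposition~\ref{pro:nodal-domain-hat1Lap}, guarantees that the sign-preserved characteristic vector $\hat{\vec 1}_{D_i^+,D_i^-}$ on each nodal domain is itself an eigenvector with the same eigenvalue, so that $1-\hat h^+(D_i^+,D_i^-)=R(\hat{\vec 1}_{D_i^+,D_i^-})=c_k(\widehat{\Delta}_1^+)$ by the identity above. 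Since the $m$ pairs $(D_i^+,D_i^-)$ are pairwise disjoint, the definition of $\hat h^+_m$ gives $\hat h^+_m\ge\min_i\hat h^+(D_i^+,D_i^-)=1-c_k(\widehat{\Delta}_1^+)$, as required.

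The delicate point is the mediant estimate in the upper bound: the identity $I^+(\vec x)+I(\vec x)=2\sum_e w_e\max(|x_u|,|x_v|)$ shows that the denominator is strictly sub-additive under linear combinations of vectors with disjoint supports whenever there are edges running between distinct blocks $V_{2i-1}\cup V_{2i}$ and $V_{2j-1}\cup V_{2j}$, so the naive mediant inequality may need extra care. I expect the bulk of the technical work to lie in a more refined bookkeeping of such cross-block contributions, or in replacing the naive linear span by a genus-$k$ symmetric subset of $X$ tailored to this denominator structure, in order to close the upper bound in full generality.
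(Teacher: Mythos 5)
The paper states this proposition with no proof at all, so there is nothing on the paper's side to compare against; your attempt has to stand on its own. Your key identity $I^+(\vec y)+I(\vec y)=\vol(V_1\cup V_2)+|E(V_1\cup V_2,(V_1\cup V_2)^c)|$ for $\vec y=\vec 1_{V_1}-\vec 1_{V_2}$, hence $R(\vec y)=1-\hat h^+(V_1,V_2)$ for $R:=I^+/(I^++I)$, is correct, and your lower-bound argument is sound: distinct nodal domains in the sense of Definition~\ref{def:nd} are non-adjacent, so restricting an eigenvector to one nodal domain preserves all the relevant $\Sgn$-inclusions (the analogue of Proposition~\ref{pro:ternarynu}), each restricted ternary vector is again an eigenvector with $R$-value $c_k(\widehat{\Delta}_1^+)$, and the $S(\vec x)$ disjoint pairs $(D_i^+,D_i^-)$ are admissible competitors for $\hat h^+_{S(\vec x)}$.

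The difficulty you flag in the upper bound is not a bookkeeping issue that extra care will resolve: the inequality $c_k(\widehat{\Delta}_1^+)\le 1-\hat h^+_k$ fails for $k\ge 2$. Take $G=P_4$, the path on vertices $1,2,3,4$, with the pairs $(\{1\},\{2\})$ and $(\{3\},\{4\})$; each ratio in the definition of $\hat h^+_2$ equals $(2+1)/(3+1)=3/4$, so $1-\hat h^+_2\le 1/4$. The vectors $\vec y^1=(1,-1,0,0)$ and $\vec y^2=(0,0,1,-1)$ indeed satisfy $R(\vec y^i)=1/4$, but $R(\vec y^1-\vec y^2)=R((1,-1,-1,1))=2/6=1/3$, exactly because the cross-block edge $\{2,3\}$ makes the denominator $I^++I=2\sum_{\{u,v\}\in E}\max\{|x_u|,|x_v|\}$ strictly subadditive over the blocks; so the span-based bound gives only $1/3$, not $1/4$. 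Moreover no other genus-$2$ set rescues the statement: writing $p=x_1-x_2$, $u=x_1+x_2$, $w=x_3+x_4$, $r=x_3-x_4$, the condition $I\ge 3I^+$ forces $3|p|\ge |u|+2|w|$ and $3|r|\ge |w|+2|u|$, and a short computation shows these are incompatible with $p+r=0$ unless $\vec x=\vec 0$; hence the odd continuous function $\phi(\vec x)=(x_1-x_2)+(x_3-x_4)$ is nonvanishing on $\{R\le 1/4\}\setminus\{\vec 0\}$, every symmetric subset of that set has genus at most $1$, and therefore $c_2(\widehat{\Delta}_1^+)>1/4\ge 1-\hat h^+_2$. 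The root cause is that $|E(V_{2i-1}\cup V_{2i},(V_{2i-1}\cup V_{2i})^c)|$ counts each edge joining two different blocks once for \emph{each} incident block, while the variational quantity can only pay for such an edge once. So the upper half of the proposition is false as stated (it is safe for $k=1$, or when the blocks $V_{2i-1}\cup V_{2i}$ are pairwise non-adjacent), and any correct proof must first repair the definition of $\hat h^+_k$, e.g.\ by charging the boundary term only to $V_0=V\setminus\bigcup_i(V_{2i-1}\cup V_{2i})$.
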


\section{\textcolor{black}{Spectral theory for maxcut} 
}
\label{sec:maxcut}


\textcolor{black}{
In this section, we devote to investigate the eigenvalue problem for $\Delta_1$ restricted to the boundary of the $l^\infty$-norm polytope $X_{\infty}$ \eqref{set:infty}, that is, the eigenproblem \eqref{eq:1-lap-maxcut} (see also the fourth row of Table~\ref{tab:cut}).
}


 \textcolor{black}{The vertex set of a connected component of the subgraph induced by $D_\pm(\vec x)$ \eqref{eq:dplusminus} and $D_0(\vec x)$ \eqref{eq:dzero} is called the $\pm$ {\color{black}extremal} domain and the null domain, respectively.} Accordingly, we divide $V$ into $r^++r^-+r^0$ disjoint $\pm$ {\color{black}extremal} domains and null domains
\begin{equation}
\label{eq:maxcut-nodal}
V=\Big(\bigcup_{\alpha=1}^{r^+} D_+^\alpha\Big)\bigcup \Big(\bigcup_{\beta=1}^{r^-}D_-^\beta\Big) \bigcup \Big(\bigcup_{\omega=1}^{r^0} D_0^\omega\Big),
\end{equation}
where $D_\pm^\gamma $ is a $\pm$ {\color{black}extremal} domain and $D_0^\omega$ is a null domain, and $r^\pm$, $r^0$ are the numbers of $\pm$ {\color{black}extremal} domains and the null domains, respectively. The reason, we prefer to use the {\color{black}extremal} domains and null domains for 1-Laplacian w.r.t. $l^\infty$-norm rather than the nodal domains, can be seen from the Courant-type nodal domain theorem (see Theorem \ref{thm:Courant}). 
Moreover, the vertex sets of connected components of the subgraphs induced by $ D_+(\vec x)\cup D_-(\vec x)$, denoted by  $D_1,D_2,\cdots,D_r$ with $D_i=A_i\cup B_i$, $A_i\subseteq D_+(\vec x)$ and $B_i\subseteq D_-(\vec x)$ for $i=1,2,\cdots,r$. For convenience, we let
\[
S(\vec x)=r^++r^-, \;\;\; S_0(\vec x)=r^0, \;\;\; S'(\vec x)=r.
\]

The adjacent relations connecting {\color{black}extremal} domains and  null domains can be summarized as the following equivalent statements:
\begin{enumerate}[(S1)]
\item\label{statement:S1} $D_+^\alpha$ and $D_+^\beta$ have no connection, $\forall \alpha \neq \beta$. The same is true if $D_+$ is replaced by $D_-$ or $D_0$.
\item If $j\sim i\in D_+^\alpha$ and $j\notin D_+^\alpha$, then $j\in D_-\cup D_0$. The same is true if $j\sim i\in D_-^\beta$ and $j\notin D_-^\beta$, then $j\in D_+\cup D_0$ and if $j\sim i\in D_0^\omega$ and $j\notin D_0^\omega$, then $j\in D_+\cup D_-$.
\item If $i\in D_+^\alpha$, then the \ensuremath{i}-th summation $\sum_{j\sim i} z_{ij}$ depends only on the connections inside $D_+^\alpha$ and those connection to $D_-\cup D_0$.
\end{enumerate}


Now we turn to study the    eigenvalue problem \eqref{eq:1-lap-maxcut}.
First of all, we shall compute the subdifferential \textcolor{black}{$\partial \|\vec x\|_\infty$}.


\begin{proposition}\label{prop:partial_infty}
Given  $\vec x\in \mathbb{R}^n\setminus \{0\}$, then the subgradient of  $\norm{\vec x}$ reads
\begin{equation}\label{eq:pinfty}
\partial \norm{\vec x} = \left\{\vec u\in \mathbb{R}^n\left|  \normone{\vec u}=1, u_i =
\begin{cases}
0, &    i\in D_0(\vec x)\\
\sign(x_i)\cdot [0,1],&   i\in D_\pm(\vec x)
\end{cases}
\right.
\right\}.
\end{equation}
\end{proposition}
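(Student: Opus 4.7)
The plan is to recognize this as a standard convex-analysis computation, namely the subdifferential of a positively $1$-homogeneous convex function. First I would invoke the general fact that, for such a function $p$ and any $\vec x \ne \vec 0$,
\[ \vec u \in \partial p(\vec x) \iff \vec u \in \partial p(\vec 0) \text{ and } \langle \vec u, \vec x\rangle = p(\vec x). \]
Applying this with $p = \|\cdot\|_\infty$, and recalling that $\partial p(\vec 0)$ is the unit ball of the dual norm (so $\partial \|\vec 0\|_\infty = \{\vec u : \|\vec u\|_1 \le 1\}$), the task reduces to describing all $\vec u$ with $\|\vec u\|_1 \le 1$ and $\langle \vec u, \vec x\rangle = \|\vec x\|_\infty$.

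The key step is then the H\"older equality chain
\[ \|\vec x\|_\infty = \langle \vec u, \vec x\rangle \le \sum_{i \in V} |u_i||x_i| \le \|\vec u\|_1\|\vec x\|_\infty \le \|\vec x\|_\infty, \]
which forces equality at every stage. Equality in the middle inequality requires $|u_i|(\|\vec x\|_\infty - |x_i|) = 0$ for every $i$, hence $u_i = 0$ whenever $i \in D_0(\vec x)$. Equality in the first inequality requires $u_i x_i \ge 0$, i.e. $u_i \in \sign(x_i)\cdot[0,\infty)$ on $D_\pm(\vec x)$. Equality in the last inequality gives $\|\vec u\|_1 = 1$, and combined with $u_i = 0$ on $D_0$ this yields the bound $u_i \in \sign(x_i)\cdot[0,1]$ on $D_\pm(\vec x)$. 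Conversely, any $\vec u$ satisfying these three conditions immediately verifies $\langle \vec u, \vec x\rangle = \sum_{i \in D_\pm}|u_i|\|\vec x\|_\infty = \|\vec u\|_1\|\vec x\|_\infty = \|\vec x\|_\infty$, so it lies in $\partial \|\vec x\|_\infty$, completing the proof.

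I do not anticipate a substantial obstacle here; it is a textbook computation whose only delicate point is cleanly tracking the two distinct equality cases (the support constraint coming from one H\"older step, the sign constraint from the other) and observing that $\|\vec u\|_1 = 1$ holds necessarily rather than merely $\|\vec u\|_1 \le 1$. This sharpening is what makes the final formula an equality in $\ell^1$-norm rather than an inequality.
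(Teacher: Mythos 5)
Your proof is correct, and it takes a genuinely different route from the paper's. You reduce everything to the standard characterization of the subdifferential of a positively $1$-homogeneous convex function, $\partial p(\vec x)=\{\vec u\in\partial p(\vec 0): \langle \vec u,\vec x\rangle=p(\vec x)\}$, identify $\partial\|\cdot\|_\infty(\vec 0)$ with the $\ell^1$ unit ball by duality, and then extract the support, sign, and normalization constraints from the equality cases of the H\"older chain. The paper instead computes the one-sided directional derivative $\frac{d}{dt}\|\vec x+t\vec y\|_\infty\big|_{t=0^+}=\max\{y_j,-y_k \mid j\in\delta_+,\ k\in\delta_-\}$, characterizes $\partial\|\vec x\|_\infty$ as the set of $\vec u$ with $\langle\vec u,\vec y\rangle$ dominated by this derivative for all $\vec y$, and then tests carefully chosen directions ($\vec y$ supported on $D_0$, signed coordinate vectors on $D_\pm$, and the signed indicator of $D_+\cup D_-$) to pin down each constraint in turn, invoking convexity of the subdifferential for the converse inclusion. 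Your duality argument buys a cleaner, more modular derivation in which all three constraints fall out simultaneously from one equality chain, and it makes transparent why $\|\vec u\|_1=1$ holds with equality; the paper's direct computation buys self-containedness (no appeal to the general homogeneous-function lemma) and exhibits explicit extreme points of the subdifferential, which is in the spirit of the coordinatewise eigenequation manipulations used later in that section. Both arguments are complete and the delicate points you flag (tracking which H\"older step yields which constraint, and the sharpness of the $\ell^1$ normalization) are exactly the right ones.
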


\begin{proof}
Let $\vec x=(x_1, \cdots, x_n),  \vec y=(y_1,\cdots, y_n)$ and
$$ \delta_\pm  = \{j_\pm \in D_\pm \mid \pm y_{j_\pm}=\max_{k\in D_\pm} \{\pm y_k\} \}.
$$
\textcolor{black}{We have
$$
\frac{d \|\vec x+t\vec y\|_\infty}{dt}\Big|_{t=0}=\max\{y_j, -y_k\,|\, j\in \delta_+, k\in \delta_-\}.
$$
Thus $\vec u=(u_1, \cdots, u_n)\in \partial \|\vec x\|_\infty$ if and only if
\begin{equation}\label{eq:zz}
\max\{y_j, -y_k\,|\, j\in \delta_+, k\in \delta_-\}\ge \sum^n_{i=1}u_i y_i,\,\, \forall\, \vec
y\in \mathbb{R}^n.
\end{equation}
Thus $\vec e_i, i\in D_+$ and $-\vec e_j, j\in D_-$ are all in $\partial \|\vec x\|_\infty$.
$ \partial \|\vec x\|_\infty$ is convex, and it follows $\sum_{i\in D_+} u_i \vec e_i -\sum_{i\in D_-} u_i \vec e_i \in \partial \|\vec x\|_\infty$ with $u_i\ge 0, \sum_{i\in D_+\cup D_-} u_i = 1$.}
Here $\{\vec e_i\}$ is the Cartesian basis of $\mathbb{R}^n$.

On the other hand, taking $y_i=0, i\in D_+\cup D_-$ in \eqref{eq:zz},
we have $u_i=0, \forall\, i\in D_0$. And, setting $\vec y = -\vec e_i,\forall i\,\in D_+$ (resp. $\vec y =\vec e_j, \forall j\,\in D_-$) in \eqref{eq:zz} yields $u_i\ge 0$ (resp. $u_j\le 0$). Finally, taking $\vec y= \sum_{i\in D_+}\vec e_i -\sum_{i\in D_-} \vec e_i$ and $-\sum_{i\in D_+}\vec e_i +\sum_{i\in D_-} \vec e_i$ in \eqref{eq:zz}, we have $\|\vec u\|_1=1$.
\end{proof}

The coordinate form of the eigenvalue problem \eqref{eq:1-lap-maxcut}, or eqivalently, $$\Delta_1\vec x\cap \mu \vol(V)\partial\|\vec x\|_\infty\ne\varnothing,$$ reads as $\exists\,z_{ij}\in \sgn(x_i-x_j) \mbox{ with }z_{ij}=-z_{ji}$~such that
\begin{align}
\label{eq:1LinftyN1}&\sum\limits_{j\sim i} z_{ij} = 0, &i\in D_0,\\
\label{eq:1LinftyN2}&\sum\limits_{j\sim i} z_{ij} \in \mu \vol(V)\sign(x_i)\cdot [0,1],& i\in D_\pm,\\
\label{eq:1LinftyN3}&\sum\limits_i^n \big|\sum\limits_{j\sim i} z_{ij} \big|=\mu\vol(V).&
\end{align}

The following result indicates that the eigenvectors are very abundant.

\textcolor{black}{
\begin{lemma}\label{lemma:preceq}
If $(\mu,\vec x)$ is an eigenpair of  \eqref{eq:1-lap-maxcut}  on $G=(V,E)$ and $\vec x \preceq \vec y$, then $(\mu,t\vec x+(1-t)\vec y)$ is also an eigenpair of  \eqref{eq:1-lap-maxcut}  for any $ t\in[0,1]$. Moreover, there exists a binary vector $\hat{\vec x}$ such that $(\mu,\hat{\vec x})$ is an eigenpair.
\end{lemma}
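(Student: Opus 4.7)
The plan is to reuse exactly the same witnessing subgradient $\{z_{ij}\}$ that realizes $(\mu,\vec x)$ as an eigenpair in \eqref{eq:1LinftyN1}--\eqref{eq:1LinftyN3}, and verify that the same collection witnesses $(\mu, \vec z)$ with $\vec z := t\vec x + (1-t)\vec y$, for every $t \in [0,1]$. The verification consists of three ingredients: subgradient compatibility, nodal-domain comparison, and inheritance of the summation conditions.

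For subgradient compatibility, I would use the identity $z_i - z_j = t(x_i - x_j) + (1-t)(y_i - y_j)$ together with (P2): if $x_i > x_j$ then $y_i \geq y_j$, so $z_i - z_j \geq 0$ and the chosen $z_{ij} = 1$ lies in $\sgn(z_i - z_j)$; the case $x_i < x_j$ is symmetric; if $x_i = x_j$ then $y_i = y_j$ forces $z_i = z_j$ and $z_{ij} \in [-1,1] = \sgn(0)$. Antisymmetry $z_{ij} = -z_{ji}$ transfers automatically. For the nodal decomposition, the key claim I will establish is
\[
D_0(\vec z) \subseteq D_0(\vec x), \qquad D_\pm(\vec z) \subseteq D_\pm(\vec x) \cup D_0(\vec x),
\]
with $\sign(z_i) = \sign(x_i)$ whenever $i \in D_\pm(\vec z) \cap D_\pm(\vec x)$. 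For $t \in (0,1]$, property (P1) gives $y_i = \|\vec y\|_\infty$ for every $i \in D_+(\vec x)$, hence $z_i = t\|\vec x\|_\infty + (1-t)\|\vec y\|_\infty$; for any $i \in D_0(\vec x)$ the triangle inequality is strict since $t|x_i| < t\|\vec x\|_\infty$. This identifies $\|\vec z\|_\infty = t\|\vec x\|_\infty + (1-t)\|\vec y\|_\infty$ and in fact gives the equality $D_\pm(\vec z) = D_\pm(\vec x)$. For $t = 0$, one has $\vec z = \vec y$, and both $D_0(\vec y) \subseteq D_0(\vec x)$ and $D_\pm(\vec y) \cap D_\mp(\vec x) = \varnothing$ follow directly from (P1).

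The summation conditions then transfer cleanly: for $i \in D_0(\vec z) \subseteq D_0(\vec x)$, \eqref{eq:1LinftyN1} for $\vec x$ gives $\sum_{j\sim i} z_{ij} = 0$, which is \eqref{eq:1LinftyN1} for $\vec z$; for $i \in D_\pm(\vec z) \cap D_\pm(\vec x)$, \eqref{eq:1LinftyN2} for $\vec x$ gives the required inclusion with matching sign; for $i \in D_\pm(\vec z) \cap D_0(\vec x)$, $\sum_{j\sim i} z_{ij} = 0$ lies at the endpoint of $\mu\vol(V) \sign(z_i) \cdot [0,1]$. The normalization \eqref{eq:1LinftyN3} is preserved verbatim. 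The main obstacle is handling the endpoint $t = 0$ uniformly with the interior: at $t = 0$ the set $D_\pm(\vec y)$ can strictly enlarge $D_\pm(\vec x)$, so \eqref{eq:1LinftyN2} must be checked at the ``new'' vertices. The unifying observation that resolves this is that such vertices still sit in $D_0(\vec x)$, where the $\vec x$-summation already vanishes, and $0$ is always an endpoint of the interval $\mu\vol(V) \sign(z_i) \cdot [0,1]$.
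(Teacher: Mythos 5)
Your proposal is correct and follows essentially the same route as the paper: reuse the witnessing subgradient $z_{ij}\in\Sgn(x_i-x_j)\subseteq\Sgn(z_i-z_j)$ and check that the three coordinate conditions transfer. The only difference is one of presentation — the paper first verifies the claim for $\vec y$ and then invokes $\vec x\preceq t\vec x+(1-t)\vec y$ to cover the convex combination, whereas you verify $\vec z=t\vec x+(1-t)\vec y$ directly and, usefully, make explicit the point the paper leaves implicit, namely that vertices newly entering $D_\pm$ lie in $D_0(\vec x)$, where the vanishing sum sits at the endpoint of the required interval.
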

}

\textcolor{black}{
\begin{proof}
It is straightforward to verify that from the definition of $\vec x \preceq \vec y$ (see \eqref{cond1:plusminus} and \eqref{cond2:order}) that $\partial I(\vec x)\subseteq \partial I(\vec y)$ and $\partial \|\vec x\|_{\infty}\subseteq \partial\|\vec y\|_{\infty}$ indeed hold. Therefore, the conclusion follows from Theorem \ref{th:tx+(1-t)y} and Lemma~\ref{lem:eigen-cut}.
\end{proof}}

\begin{theorem}\label{theorem:spct2}
Given a graph $G=(V,E)$, the following statements hold:
\begin{enumerate}[(a)]
\item For every eigenvalue $\mu$ of  \eqref{eq:1-lap-maxcut}, we have $0\leq \mu\leq 1$.
\item The difference of any two eigenvalues is at least $\frac{2}{\vol(V)}$. Therefore,  \eqref{eq:1-lap-maxcut}  has at most $\frac{\vol(V)}{2}+1$ distinct eigenvalues.
\item\label{th:c} For any subset $A\subseteq V$, let $\vec x=\vec 1_{A,A^c}$ and
$\mu=\frac{2\abs{\partial A}}{\vol(V)}$, then $(\mu,\vec x)$ is an eigenpair of  \eqref{eq:1-lap-maxcut} .
\item\label{tpD} If $(\mu,\vec x)$is an eigenpair of  \eqref{eq:1-lap-maxcut}, then $\frac{I(\vec x)}{\vol(V)\norm{\vec x}}=\mu$ and $\frac{\mu}{2}=\frac{\abs{\partial D_-}}{\vol(V)}=\frac{\abs{\partial D_+}}{\vol(V)}$.
Moreover, both $(\mu,\vec 1_{D_+,D_-})$ and $(\mu,\vec 1_{D_+,D_+^c})$ are eigenpairs.
\end{enumerate}
\end{theorem}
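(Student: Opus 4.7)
The plan is to handle parts (c) and (d) first, since they do the concrete work, and then to deduce (a) and (b) as arithmetic corollaries.

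For part (c), given $A\subseteq V$ and $\vec x=\vec 1_{A,A^c}$, I would construct antisymmetric data $z_{ij}$ satisfying \eqref{eq:1LinftyN1}--\eqref{eq:1LinftyN3} by taking $z_{ij}=\sign(x_i-x_j)\in\{\pm 1\}$ when $\{i,j\}\in\partial A$ and $z_{ij}=0$ otherwise. Since $D_0(\vec x)=\varnothing$, condition \eqref{eq:1LinftyN1} is vacuous; for $i\in A$ one computes $\sum_{j\sim i}z_{ij}=d_i^-$ (the total weight of edges from $i$ to $A^c$), which lies in $[0,|\partial A|]\subseteq[0,\mu\vol(V)]$, verifying \eqref{eq:1LinftyN2}. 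Summing absolute values then yields $\sum_{i\in A}d_i^-+\sum_{i\in A^c}d_i^+=2|\partial A|=\mu\vol(V)$, which is \eqref{eq:1LinftyN3}.

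For part (d), I would first pair the eigen-inclusion $0\in\Delta_1\vec x-\mu\vol(V)\partial\|\vec x\|_\infty$ with $\vec x$. Because $I$ and $\|\cdot\|_\infty$ are positively $1$-homogeneous convex functions, Euler's identity gives $\langle\vec z,\vec x\rangle=I(\vec x)$ for every $\vec z\in\partial I(\vec x)=\Delta_1\vec x$ and $\langle\vec u,\vec x\rangle=\|\vec x\|_\infty$ for every $\vec u\in\partial\|\vec x\|_\infty$, so that $I(\vec x)=\mu\vol(V)\|\vec x\|_\infty$. To produce the claimed binary eigenpairs I would invoke Lemma \ref{lemma:preceq}: a pointwise check of (P1) and (P2) shows $\vec x\preceq\vec 1_{D_+,D_-}$ (with zeros on $D_0$) and $\vec x\preceq\vec 1_{D_+,D_+^c}$, making both eigenvectors for $\mu$. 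Applying the same reasoning to $-\vec x$ yields $(\mu,\vec 1_{D_-,D_-^c})$ as an eigenpair, and combining this with part (c) forces $\mu=2|\partial D_+|/\vol(V)=2|\partial D_-|/\vol(V)$.

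Parts (a) and (b) then fall out by arithmetic. The identity in (d) together with $I\ge 0$ gives $\mu\ge 0$; the bound $|\partial D_+|\le|E|=\vol(V)/2$ (under the unit-weight convention $w_{ij}=1$) gives $\mu\le 1$. For (b), (d) forces every eigenvalue to have the form $2k/\vol(V)$ with $k=|\partial D_+|\in\{0,1,\dots,\vol(V)/2\}$, so distinct eigenvalues differ by at least $2/\vol(V)$ and there are at most $\vol(V)/2+1$ of them.

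The main obstacle I anticipate is the pointwise verification of $\vec x\preceq\vec 1_{D_+,D_-}$ and $\vec x\preceq\vec 1_{D_+,D_+^c}$ in (d): one must carefully handle all pairs $(i,j)$ with at least one endpoint in $D_0(\vec x)$, checking that the strict inequality $x_i<x_j$ is still respected when $D_0$ is collapsed to $0$ in the first case and absorbed into $D_-$ in the second, and that equalities are preserved. Once that case analysis is complete, Lemma \ref{lemma:preceq} together with part (c) delivers the rest of (d) with no further work.
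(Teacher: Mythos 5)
Your proposal is correct and follows essentially the same route as the paper: part (c) by the same explicit construction of $z_{ij}$, part (d) by checking $\vec x\preceq\vec 1_{D_+,D_-}$ and $\vec x\preceq\vec 1_{D_+,D_+^c}$ and invoking Lemma \ref{lemma:preceq}, and parts (a) and (b) as arithmetic consequences of the binary representation $\mu=2\abs{\partial A}/\vol(V)$. Your explicit use of the Euler identity for positively $1$-homogeneous convex functions to get $I(\vec x)=\mu\vol(V)\norm{\vec x}$ is a small but welcome addition that the paper leaves implicit, and it cleanly pins down the eigenvalue of each binary eigenvector.
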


\begin{proof} ~
\begin{enumerate}[(a)]
\item \textcolor{black}{From Lemma~\ref{lemma:preceq}, for every eigenvalue $\mu$ of \eqref{eq:1-lap-maxcut}, there exists a subset $A\subseteq V$ such that}
$$0\leq \mu=\frac{2\abs{\partial A}}{\vol(V)}\leq 1.$$
\item For every eigenvalues $\mu_1\ne \mu_2$, there exists subsets $A_1,A_2\subseteq V$ such that
$$\abs{\mu_1-\mu_2}=\frac{2\abs{\abs{\partial A_1}-\abs{\partial A_2}}}{\vol(V)}\ge \frac{2}{\vol(V)}.$$
Since all eigenvalues are contained in $[0,1]$, $\Delta_{1,\infty}^-$ have at most $\frac{\vol(V)}{2}+1$ different eigenvalues.
\textcolor{black}{\item and (d) are consequences of Lemma~\ref{lem:eigen-cut} and Theorem \ref{thm:eigen-value}.}
\end{enumerate}
\end{proof}


\begin{proposition}\label{pro:sym}
Suppose $(\mu,\vec x)$ is an eigenpair of  \eqref{eq:1-lap-maxcut}, and $D_0^1,~\cdots,D_0^{r^0}$ are null domains of $\vec x$ on $G=(V,E)$, then
\begin{equation}\label{equ:sym}
\abs{E(D_+,D_0^\omega)}=\abs{E(D_-,D_0^\omega)}, \quad \forall\,1\leq \omega \leq r^0.
\end{equation}
\end{proposition}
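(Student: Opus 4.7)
The plan is to sum the coordinate-form eigenvalue equation \eqref{eq:1LinftyN1} over a fixed null domain $D_0^\omega$ and exploit the antisymmetry $z_{ij}=-z_{ji}$. Concretely, for every $i\in D_0^\omega\subseteq D_0(\vec x)$, equation \eqref{eq:1LinftyN1} gives $\sum_{j\sim i}z_{ij}=0$. Summing over $i\in D_0^\omega$ yields
\[
0=\sum_{i\in D_0^\omega}\sum_{j\sim i}z_{ij},
\]
and the proof reduces to evaluating this double sum by partitioning the neighbors $j$ according to which class ($D_0^\omega$, $D_+$, or $D_-$) they lie in.

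The key observation is that any neighbor $j$ of $i\in D_0^\omega$ that lies in $D_0$ must itself lie in $D_0^\omega$, because $D_0^\omega$ is a connected component of the subgraph induced by $D_0$; this is precisely the adjacency statement (S\ref{statement:S1}) recorded just before the proposition. Therefore only three possibilities remain for such a neighbor $j$, namely $j\in D_0^\omega$, $j\in D_+$, or $j\in D_-$. For $j\in D_0^\omega$, the edge $\{i,j\}$ is counted from both endpoints and contributes $z_{ij}+z_{ji}=0$, so the interior part of the sum vanishes. For $j\in D_+$, one has $x_j=\norm{\vec x}>|x_i|\ge x_i$, hence $x_i-x_j<0$ and so $z_{ij}\in\sgn(x_i-x_j)=\{-1\}$; symmetrically, for $j\in D_-$ one has $x_i-x_j>0$, forcing $z_{ij}=+1$. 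Substituting these hard-sign evaluations collapses the boundary contributions to
\[
0 \;=\; -\,|E(D_0^\omega,D_+)| \;+\; |E(D_0^\omega,D_-)|,
\]
which is precisely the required identity \eqref{equ:sym}.

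I do not expect a substantial obstacle here: the argument is entirely local and uses only the definition of null domains together with the rigidity of $\sgn$ at the interface between $D_0^\omega$ and $D_\pm$. The only technical checkpoint is the ``no neighbor in a different null domain'' step, which is immediate from the connected-component definition. The coordinate form \eqref{eq:1LinftyN1} is stated without edge weights, but if one wishes to carry the conclusion over to the weighted graph 1-Laplacian, the same computation goes through verbatim by replacing $z_{ij}$ with $w_{ij}z_{ij}$ throughout and interpreting $|E(\cdot,\cdot)|$ as total edge weight.
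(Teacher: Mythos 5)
Your proof is correct and follows essentially the same route as the paper's: sum the null-domain equation \eqref{eq:1LinftyN1} over $i\in D_0^\omega$, cancel the interior contributions via $z_{ij}=-z_{ji}$ and the connected-component property (S\ref{statement:S1}), and evaluate the rigid signs at the interface with $D_\pm$ to obtain $-\abs{E(D_+,D_0^\omega)}+\abs{E(D_-,D_0^\omega)}=0$. The only difference is cosmetic: you spell out the sign evaluations $z_{ij}=\mp1$ explicitly, which the paper leaves implicit.
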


\begin{proof}
It suffices to prove \eqref{equ:sym} for $\omega=1$. Since  $(\mu,\vec x)$ is an eigenpair of  \eqref{eq:1-lap-maxcut}, there exists $z_{ij}\in \sgn(x_i-x_j)$ with $z_{ij}=-z_{ji}$ such that
\begin{equation*}
\sum_{j\sim i} z_{ij} = 0,\quad i\in D_0.
\end{equation*}
Summing over the above equation for all $i\in D_0^1$ on both sides leads to
\begin{equation}
\sum_{i\in D_0^1}\sum_{j\sim i} z_{ij} = 0,
\end{equation}
and thus
\begin{equation}
\sum_{i\in D_0^1}\sum_{j\sim i,~j\in D_\pm} z_{ij} = 0,
\end{equation}
provided by (S\ref{statement:S1}). Hence
\begin{equation}
-\abs{E(D_+,D_0^1)}+\abs{E(D_-,D_0^1)}=0.
\end{equation}
\end{proof}


\textcolor{black}{
\begin{theorem}\label{theorem:nodal1}
Reorder all the eigenvalues of  \eqref{eq:1-lap-maxcut}  into
$\bar\mu_0>\bar\mu_1>\cdots >
\bar\mu_{\min}=0$,
then for any eigenpair $(\bar\mu_k, \vec x_k)$ with $k=0,1,\cdots$, we have
$$N_s(\vec x_k)\leq k,$$
where $N_s(\vec x_k)$ denotes the number of non-single point sets in a subgraph $G_0(\vec x_k)$ induced by $D_0(\vec x_k)$.
Especially,  $G_0(\vec x_0)$ consists of single points.
\end{theorem}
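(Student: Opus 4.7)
The plan is: assuming $N(\vec x_k) = m$, to exhibit $m$ distinct eigenvalues strictly larger than $\mu_k$, which forces $m \le k$ since $\mu_0 > \mu_1 > \cdots > \mu_{k-1}$ are the only eigenvalues exceeding $\mu_k$. Let $D_0^1, \ldots, D_0^m$ denote the non-single null domains of $\vec x_k$, and set $D_\pm = D_\pm(\vec x_k)$. By Theorem~\ref{theorem:spct2}~\ref{tpD}, $\mu_k = 2|\partial D_+|/\vol(V)$; by Theorem~\ref{theorem:spct2}~\ref{th:c}, for every $A \subseteq V$ the binary vector $\vec 1_{A, A^c}$ is an eigenvector with eigenvalue $2|\partial A|/\vol(V)$. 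So the full spectrum coincides with $\{2|\partial A|/\vol(V) : A \subseteq V\}$, and it suffices to produce $m$ subsets whose boundary sizes are mutually distinct and strictly exceed $|\partial D_+|$.

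For each $\omega$ and each $T \subseteq D_0^\omega$, define
$$f_\omega(T) := |E(T, D_-)| + |E(T, D_0^\omega \setminus T)| - |E(D_+, T)|.$$
A direct computation, exploiting that distinct null-domain components share no edges in $G$, shows that for any choices $T_\omega \subseteq D_0^\omega$ and any $S \subseteq \{1, \ldots, m\}$, the set $A_S := D_+ \cup \bigcup_{\omega \in S} T_\omega$ satisfies $|\partial A_S| = |\partial D_+| + \sum_{\omega \in S} f_\omega(T_\omega)$. It then remains to pick each $T_\omega$ so that $f_\omega(T_\omega) \ge 1$. Observe that $f_\omega(\emptyset) = 0$ and, by Proposition~\ref{pro:sym}, also $f_\omega(D_0^\omega) = |E(D_0^\omega, D_-)| - |E(D_+, D_0^\omega)| = 0$, so neither extremal choice suffices.

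Positivity is forced by an averaging argument: drawing $T$ uniformly at random over $2^{D_0^\omega}$ (each vertex included independently with probability $1/2$) yields
$$\mathbb{E}[f_\omega(T)] = \tfrac{1}{2}\bigl(|E(D_0^\omega, D_-)| - |E(D_+, D_0^\omega)|\bigr) + \tfrac{1}{2}|E(D_0^\omega)| = \tfrac{1}{2}|E(D_0^\omega)| > 0,$$
using Proposition~\ref{pro:sym} once more, and noting $|E(D_0^\omega)| \ge 1$ because the connected non-single component $D_0^\omega$ must contain at least one edge. Since $f_\omega$ is integer valued, some proper nonempty $T_\omega \subsetneq D_0^\omega$ satisfies $f_\omega(T_\omega) \ge 1$. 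Taking the nested chain $\emptyset = S_0 \subsetneq S_1 = \{1\} \subsetneq \cdots \subsetneq S_m = \{1, \ldots, m\}$ then yields strictly increasing boundary sizes $|\partial A_{S_0}| < |\partial A_{S_1}| < \cdots < |\partial A_{S_m}|$, hence $m$ distinct eigenvalues strictly exceeding $\mu_k$, so $m \le k$. The main obstacle is precisely this positivity step: because $f_\omega$ vanishes at both extreme subsets of $D_0^\omega$, positivity must come from the interior of the Boolean cube $2^{D_0^\omega}$, and Proposition~\ref{pro:sym} is indispensable both for cancelling the boundary contributions in the average and for ruling out the two trivial choices.
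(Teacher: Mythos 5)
Your proof is correct, and its skeleton is the same as the paper's: both arguments exploit the additivity of the boundary increment $S\mapsto|\partial(D_+\cup S)|-|\partial D_+|$ across the null domains (a consequence of (S1)), extract from each non-singleton null domain $D_0^\omega$ a subset with strictly positive increment, and then use the nested chain of binary eigenvectors from Theorem~\ref{theorem:spct2}~\ref{th:c} to manufacture $N(\vec x_k)$ distinct eigenvalues above $\mu_k$. Where you genuinely diverge is in the positivity step. The paper proves the sharper statement that a \emph{single vertex} $i_\omega\in D_0^\omega$ already has positive increment: assuming all singletons have nonpositive increment, it sums the identities $\tfrac12\vol(V)f(\{i\})=|E(\{i\},D_0^\omega\setminus\{i\})|-\sum_{j\sim i,\,j\in D_0^\omega}z_{ij}$ over $i\in D_0^\omega$, the $z_{ij}$ cancel by antisymmetry, and one concludes $D_0^\omega$ has no internal edges, a contradiction. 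You instead average $f_\omega$ over all subsets of $D_0^\omega$, using Proposition~\ref{pro:sym} to cancel the $D_\pm$ contributions and leaving $\mathbb{E}[f_\omega(T)]=\tfrac12|E(D_0^\omega)|>0$. Both routes ultimately rest on the eigen-equation on $D_0$ (yours through Proposition~\ref{pro:sym}, the paper's directly through $\sum_{j\sim i}z_{ij}=0$); your version is arguably cleaner and even yields the quantitative bound $\max_T f_\omega(T)\ge\tfrac12|E(D_0^\omega)|$, while the paper's version buys the extra structural information that the witnessing set can be taken to be a singleton, so that consecutive sets in the chain differ by one vertex. One cosmetic caveat: your final reduction to ``$f_\omega$ is integer valued'' implicitly assumes integer edge weights, but this is exactly the convention the paper itself adopts in this section (cf.\ Theorem~\ref{theorem:spct2}(b)), and in any case positivity of the expectation alone already gives a subset with $f_\omega(T_\omega)>0$, which is all the strict monotonicity of the chain requires.
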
}

\begin{proof}
Suppose $(\mu,\vec x)$ is an eigenpair of $\Delta_{1}|_{X^{\infty}}$. Let $f:\power(D_0)\rightarrow \mathbb{R}$ be a function such that
$$
f(S)=\frac{2(\abs{\partial (D_+\cup S)}-\abs{\partial D_+})}{\vol(V)}, \;\; \forall\, S\subseteq D_0
$$
At first we shall prove
\begin{equation}\label{eq:lemmaofeigenpari}
f(\bigcup_{i=1}^{r^0} A_i)=\sum_{i=1}^{r^0} f(A_i),
\;\; \text{for} \;\; A_i \subseteq D_0^i, \;\; i=1,2,\cdots,r^0.
\end{equation}
In fact, the fact (S\ref{statement:S1}) implies
$$
E(A_i,D_0^j\setminus A_j)=0 \mbox{ for any }  i\neq j,
$$
then
\begin{equation}\label{eq:E(A,D_0/A)}
\abs{E(A,D_0\setminus A)}=\sum_{i=1}^{r^0}\sum_{j=1}^{r^0}\abs{E(A_i,D_0^j\setminus A_j)}=\sum_{i=1}^{r^0}\abs{E(A_i,D_0^i\setminus A_i)},
\end{equation}
where $A = \bigcup_{i=1}^{r^0} A_i$. Hence, \eqref{eq:lemmaofeigenpari} can be deduced via   \eqref{eq:E(A,D_0/A)} as follows
\begin{align*}
\frac{1}{2}\vol(V)f(A)&=\abs{\partial (D_+\cup A)}-\abs{\partial D_+}\\
&=\abs{E(A,D_0\setminus A)}+\abs{E(A,D_-)}-\abs{E(A,D_+)}\\
&=\sum_{i=1}^{r^0}(\abs{E(A_i,D_0^i\setminus A_i)}+\abs{E(A_i,D_-)}-\abs{E(A_i,D_+)})\\
&=\sum_{i=1}^{r^0}(\abs{E(A_i,D_0\setminus A_i)}+\abs{E(A_i,D_-)}-\abs{E(A_i,D_+)})\\
&=\frac{1}{2}\vol(V)\sum_{i=1}^{r^0} f(A_i).
\end{align*}


By Theorem \ref{theorem:spct2}, $\vec x=\vec 1_{D_+(\vec x_k)}-\vec 1_{D_-(\vec x_k)}$ is also an eigenvector of $\mu_k$ and satisfies $D_0(\vec x_k)=D_0(\vec x)$. So there exists $z_{ij}\in \sgn(x_i-x_j)$ with $z_{ij}=-z_{ji}$ such that
\begin{equation*}
\sum_{j\sim i} z_{ij} = 0,\quad i\in D_0.
\end{equation*}

Let $D_0^1(\vec x),D_0^2(\vec x),\cdots ,D_0^{\textcolor{black}{N_s(\vec x_k)}}(\vec x)$ be non-single point sets in $G_0(\vec x_k)$. We will show that for any $\omega \in\{1,2,\cdots,\textcolor{black}{N_s(\vec x_k)}\}$, there exists $i_\omega \in D_0^\omega(\vec x)$ such that
$$f(\{i_\omega \})>0.$$
Suppose the contrary, that there exists $\omega$ such that
$$ f(\set{i})\leq 0$$
for any $~i\in D_0^\omega$, which means that
\begin{align*}
\frac{1}{2}\vol(V)f(\set{i})&=\abs{E(\set{i},D_0^\omega \setminus \set{i})}+\abs{E(\set{i},D_-)}-\abs{E(\set{i},D_+)}\\
&=\abs{E(\set{i},D_0^\omega \setminus \set{i})}-\sum_{j\sim i,~j\in D_0^\omega}z_{ij}(\vec x).
\end{align*}
Summing over the above equation for all $i\in D_0^\omega$ on both sides yields
\begin{equation*}
0\geq \sum_{i\in D_0^\omega}\frac{1}{2}\vol(V)f(\set{i}) =\sum_{i\in D_0^\omega}\abs{E(\set{i},D_0^\omega/\set{i})}\ge 0,
\end{equation*}
and thus $\abs{E(\set{i},D_0^\omega/\set{i})}=0$ for any $i\in D_0^\omega$.
That is , the volume of subgraph induced by $D_0^\omega$, $\sum_{i\in D_0^\omega}\abs{E(\{i\},D_0^\omega/\{i\})}$, vanishes, which contradicts the fact that non-single point set $D_0^\omega$ is the vertex set of a connected component of the subgraph induced by $D_0(\vec x)$. Hence there exists $i_\omega \in D_0^\omega(\vec x)$ such that $f(\{i_\omega\})>0$ for any $\omega\mbox{ with } 1\leq \omega\leq r^0$.
Accordingly, let $B_0=D_0$, $A_\omega = \{i_\omega\}$ and $B_\omega=\bigcup_{j=1}^\omega A_j\bigcup D_+$ with $\omega=1,2,\cdots,\textcolor{black}{N_s(\vec x_k)}$. By Theorem \ref{theorem:spct2}, there is a series of eigenvectors $\vec 1_{B_\omega,B_\omega^c}$ with eigenvalues $\nu_\omega$, where
$$
\nu_\omega = \frac{2\abs{\partial B_\omega}}{\vol(V)}=\bar\mu_k+f(\bigcup_{j=1}^\omega A_j)=\bar\mu_k+\sum_{j=1}^\omega f( A_j).
$$
That is, there are at least $\textcolor{black}{N_s(\vec x_k)}$ different eigenvalues such that
$$\bar\mu_k=\nu_0<\nu_1<\cdots<\nu_{\textcolor{black}{N_s(\vec x_k)}}.$$
Since $\bar\mu_k$ is the $(k+1)$-th largest eigenvalue,
we can deduce $\textcolor{black}{N_s(\vec x_k)}\leq k$.
\end{proof}

%
%


\begin{proposition}
Given a connect graph $G=(V,E)$, suppose $\mu$ is the minimal non-zero eigenvalue of  \eqref{eq:1-lap-maxcut}  and $\vec x$  the corresponding eigenvector, then the number of null domains
\begin{equation*}
S_0(\vec x)\leq 2.
\end{equation*}
\end{proposition}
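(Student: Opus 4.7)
My plan is to argue by contradiction: suppose $S_0(\vec x) = r^0 \ge 3$ and produce a nonempty proper subset $B \subseteq V$ with $|\partial B| < |\partial D_+|$. This will contradict the minimality of $\mu$, because Theorem~\ref{theorem:spct} together with Theorem~\ref{theorem:spct2}~\ref{th:c} shows that every eigenvalue of \eqref{eq:1-lap-maxcut} is of the form $2|\partial A|/\vol(V)$, and Theorem~\ref{theorem:spct2}~\ref{tpD} identifies $\mu$ with $2|\partial D_+|/\vol(V)$; hence $\mu$ being the smallest positive eigenvalue forces $|\partial D_+|$ to be the minimum cut value over nonempty proper subsets of $V$.

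Next I would label the null domains $D_0^1,\ldots,D_0^{r^0}$ and set $a_\omega := |E(D_+,D_0^\omega)|$. Proposition~\ref{pro:sym} also gives $|E(D_-,D_0^\omega)| = a_\omega$. A short preparatory step establishes $a_\omega > 0$ for every $\omega$: statement~(S\ref{statement:S1}) forbids edges between distinct null domains, so if $a_\omega = 0$ then $D_0^\omega$ would be a connected component of $G$ disjoint from $D_\pm$, which is impossible because $G$ is connected and $D_\pm \neq \varnothing$.

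The winning candidate for $B$ will be a single null domain of minimum $a$-value. Indeed, since distinct null domains are not connected to one another, $|\partial D_0^\omega| = 2a_\omega$, whereas $|\partial D_+| = |E(D_+,D_-)| + \sum_{\omega=1}^{r^0} a_\omega$. Picking $\omega_0$ with $a_{\omega_0} = \min_\omega a_\omega$ and invoking $r^0 \ge 3$, I would chain
\[
|\partial D_+|\;\ge\;\sum_{\omega=1}^{r^0} a_\omega\;\ge\; r^0\, a_{\omega_0}\;\ge\; 3a_{\omega_0}\;>\; 2a_{\omega_0}\;=\; |\partial D_0^{\omega_0}|,
\]
so $B = D_0^{\omega_0}$ does the job; by Theorem~\ref{theorem:spct2}~\ref{th:c}, the corresponding eigenvalue $4a_{\omega_0}/\vol(V)$ is positive and strictly less than $\mu$, finishing the contradiction. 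The main conceptual obstacle is recognizing that simply reassigning entire null domains between $D_+$ and $D_-$ only reproduces the eigenvalue $\mu$ (as is implicit in the additive formula from the proof of Theorem~\ref{theorem:nodal1}); the nontrivial observation is that \emph{isolating} a single null domain produces a strictly smaller cut once $r^0 \ge 3$, because $\partial D_0^{\omega_0}$ accumulates only two copies of $a_{\omega_0}$ while $\partial D_+$ aggregates at least three.
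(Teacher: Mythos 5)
Your argument is correct and is essentially the paper's own proof: the same contradiction setup with $r^0\ge 3$, the same use of Proposition~\ref{pro:sym} and connectedness to get $a_\omega=|E(D_+,D_0^\omega)|=|E(D_-,D_0^\omega)|\ge 1$, and the same choice of the null domain with minimal $a$-value to produce the cut $D_0^{\omega_0}$ with $|\partial D_0^{\omega_0}|=2a_{\omega_0}<\sum_\omega a_\omega\le|\partial D_+|$, contradicting minimality of $\mu$. No gaps.
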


\begin{proof}
Suppose the contrary: $S_0(\vec x)=k\geq 3$, and let $D_0^1,~\cdots,D_0^k$ be null domains of $\vec x$. From Proposition \ref{pro:sym}, denote
$$
a_i=\abs{E(D_+,D_0^i)}=\abs{E(D_-,D_0^i)}\geq 1,\;\; i=1,2,\ldots,k,
$$
where the condition that $G$ is connected has been used.
Without loss of generality, assume $1\leq a_1\leq \cdots \leq a_k$,
and then let $\tilde{\vec x}=\vec 1_{D_0^1,(D_0^1)^c}$.
Theorem \ref{theorem:spct2} implies that $\tilde{\vec x}$ is an eigenvector corresponding to an eigenvalue $\tilde{\mu}$. Thus we have
\begin{equation*}
0<\frac{1}{2}\vol(V)\tilde{\mu}=2a_1<\sum_{i=1}^k a_i\leq\frac{1}{2}\vol(V)\mu,
\end{equation*}
i.e., $\tilde{\mu}$ is another non-zero eigenvalue of  \eqref{eq:1-lap-maxcut}  less than $\mu$. This is a contradiction and thus $S_0(x)\leq 2.$
\end{proof}


Below we will show some very special (amazing) structures of the maximal eigenvector.

\begin{proposition}\label{pro:max}
Let $\vec x$ be an eigenvector corresponding to $\mu_{\max}$. Then we have:
\begin{enumerate}[(a)]
\item The collection of vertex sets of connected components of $G_0$
equals to that of $G_1$. Here $G_0=(D_+\bigcup D_-,E(D_+,D_-))$ and $G_1$ is the subgraph induced by $D_+\bigcup D_-$ on $G$. Moreover, $\sum_{i=1}^{S'(\vec x)} a_i \vec 1_{A_i,B_i}$ with $a_i\in\{-1,+1\}$ is a maximal eigenvector, too.
\item $\abs{\partial A_i}=\abs{\partial B_i}$ for $i=1,2,\ldots, S'(\vec x)$.
\item $|E(A_i,\{v\})|=|E(B_i,\{v\})|$ for any $i\in\{1,\ldots,S'(\vec x)\}$ and $v\in D_0$.
\end{enumerate}
\end{proposition}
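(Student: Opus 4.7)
For the first claim of (a), that the connected components of $G_0$ and $G_1$ share the same vertex sets, I argue by contradiction. Suppose some $G_1$-component $D_i$ splits in $G_0$ as $D_i = C \sqcup D$ with $C, D$ nonempty and no $G_0$-edges between them. Writing $C_A = C \cap A_i$, $C_B = C \cap B_i$, $D_A = D \cap A_i$, $D_B = D \cap B_i$, this splitting forces $E(C_A, D_B) = E(C_B, D_A) = \varnothing$, and the $G_1$-connectedness of $D_i$ then forces $\abs{E(C_A, D_A)} + \abs{E(C_B, D_B)} \geq 1$. Define $\vec y' = \vec 1_{\tilde D_+, \tilde D_-}$ (vanishing on $D_0$) with $\tilde D_+ = (D_+ \setminus C_A) \cup C_B$ and $\tilde D_- = (D_- \setminus C_B) \cup C_A$. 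By Theorem~\ref{theorem:spct2}~\ref{tpD}, $I(\vec 1_{D_+, D_-}) = \mu_{\max} \vol(V)$, while a direct edge-by-edge comparison shows that only the edges in $E(C_A, D_A) \cup E(C_B, D_B)$ change their contribution to $I$ (from $0$ to $2$), giving
\begin{equation*}
I(\vec y') = I(\vec 1_{D_+, D_-}) + 2\bigl(\abs{E(C_A, D_A)} + \abs{E(C_B, D_B)}\bigr) > \mu_{\max} \vol(V).
\end{equation*}
Since $I$ is convex on $\R^n$, its maximum over the cube $[-1,1]^n$ is attained at a vertex $\vec 1_{A, A^c}$, so $\max_{\norm{\vec x} = 1} I(\vec x) = 2\max_A \abs{\partial A} = \mu_{\max} \vol(V)$, contradicting $I(\vec y')/\vol(V) > \mu_{\max}$.

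For the second claim of (a), fix any signs $\{a_i\} \subset \{-1, +1\}$ and set $\vec y = \sum_i a_i \vec 1_{A_i, B_i}$. By the component structure just established, every edge within a $G_1$-component $D_i$ lies in $E(A_i, B_i)$, hence
\begin{equation*}
I(\vec y) = 2\sum_i \abs{E(A_i, B_i)} + \sum_i \bigl(\abs{E(A_i, D_0)} + \abs{E(B_i, D_0)}\bigr) = I(\vec 1_{D_+, D_-}) = \mu_{\max} \vol(V),
\end{equation*}
independent of the signs chosen. Since $\norm{\vec y} = 1$, the Rayleigh quotient $I(\vec y)/(\vol(V) \norm{\vec y})$ attains its variational supremum $\mu_{\max}$; by the standard critical-point characterization for ratios of positively $1$-homogeneous convex functions, $\vec y$ satisfies $\vec 0 \in \Delta_1 \vec y - \mu_{\max} \vol(V) \partial \norm{\vec y}$, i.e., it is a maximal eigenvector.

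For part (c), fix $v \in D_0$. Maximality of $(D_+, D_+^c)$ gives $\abs{\partial(D_+ \cup \{v\})} \leq \abs{\partial D_+}$, which after expansion reads $\abs{E(\{v\}, D_+)} \geq \abs{E(\{v\}, D_-)} + \abs{E(\{v\}, D_0)}$; the same argument applied to the maxcut $(D_-, D_-^c)$ yields the reverse inequality, and adding forces both $\abs{E(\{v\}, D_0)} = 0$ and $\abs{E(\{v\}, D_+)} = \abs{E(\{v\}, D_-)}$. Applying the previous paragraph with $a_i = -1$ and $a_k = +1$ for $k \neq i$ produces a new maxcut $(\tilde D_+, \tilde D_+^c)$ with $\tilde D_+ = (D_+ \setminus A_i) \cup B_i$, whence the same argument gives $\abs{E(\{v\}, \tilde D_+)} = \abs{E(\{v\}, \tilde D_-)}$; expanding together with $\abs{E(\{v\}, D_+)} = \abs{E(\{v\}, D_-)}$ simplifies to $\abs{E(\{v\}, A_i)} = \abs{E(\{v\}, B_i)}$, which is (c). Finally, (b) follows since by (a), $\abs{\partial A_i} = \abs{E(A_i, B_i)} + \abs{E(A_i, D_0)}$ and $\abs{\partial B_i} = \abs{E(A_i, B_i)} + \abs{E(B_i, D_0)}$, and summing (c) over $v \in D_0$ identifies the two.

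The main obstacle I anticipate is the combinatorial bookkeeping behind the strict inequality $I(\vec y') > \mu_{\max} \vol(V)$ in the first step, which hinges critically on the disjointness $E(C_A, D_B) = E(C_B, D_A) = \varnothing$ forced by the $G_0$-splitting in order to pinpoint precisely which edges alter their contribution under the partial sign-flip. A secondary subtlety is the variational criticality step in the second paragraph, which is routine for the Rayleigh-quotient framework developed in the paper but merits an explicit reference.
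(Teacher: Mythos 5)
Your proof is correct in substance but follows a genuinely different route from the paper's. For the component identification in (a), the paper flips the signs of \emph{all} $G_0$-components simultaneously, expands $I(\sum_i a_i\vec 1_{\tilde{A}_i,\tilde{B}_i})$ for arbitrary signs $a_i\in\{-1,+1\}$, and squeezes between $\vol(V)\mu_{\max}$ from both sides to force the cross-component terms $|E(\tilde{A}_i,\tilde{A}_j)|+|E(\tilde{B}_i,\tilde{B}_j)|$ to vanish; you instead flip a single putative $G_0$-sub-component $C$ of a $G_1$-component and contradict $\mu_{\max}\vol(V)=2\max_A|\partial A|$ directly. For (b) the paper performs the swap $S=(D_+\setminus A_i)\cup B_i$ and compares $|\partial S|$ with $|\partial D_+|$, while you deduce (b) from (c); for (c) the paper invokes Theorem \ref{theorem:nodal1} (the null set of a maximal eigenvector consists of isolated vertices) together with Proposition \ref{pro:sym}, whereas you re-derive both facts from the maxcut optimality of $D_+$, $D_-$ and of their flipped versions. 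Your version is more self-contained, bypassing Theorem \ref{theorem:nodal1} and Proposition \ref{pro:sym} entirely, at the price of using the identification $\mu_{\max}\vol(V)=2\max_A|\partial A|$ explicitly (which is legitimate, being a consequence of Theorems \ref{theorem:spct} and \ref{theorem:spct2}).

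One intermediate claim in your second paragraph is false as written: it is not true that every edge within a $G_1$-component $D_i$ lies in $E(A_i,B_i)$. Coincidence of the component vertex sets does not exclude edges inside $A_i$ or inside $B_i$ --- for example, on $K_3$ with $D_+=\{1,2\}$, $D_-=\{3\}$ the vector $(1,1,-1)$ is a maximal eigenvector and the edge $\{1,2\}$ lies inside $A_1$. What the component identification actually gives you is only the absence of edges between distinct components. Your displayed identity for $I(\vec y)$ nevertheless survives, because an edge inside $A_i$ (or $B_i$) contributes $|a_i-a_i|=0$ to $I(\vec y)$ for every choice of signs and likewise contributes $0$ to $I(\vec 1_{D_+,D_-})$; the same caveat applies to your expansion of $|\partial A_i|$ in the final step, where internal edges of $A_i$ are simply not counted, so the conclusions stand. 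You should repair the justification rather than the formulas.
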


\begin{proof}
The proof is given in turn below.
\begin{enumerate}[(a)]
\item Let $\tilde{D}_1,\tilde{D}_2,\ldots ,\tilde{D}_k$ be the vertex sets of connected components of $G_0$ where $\tilde{D_i}=\tilde{A_i}\bigcup \tilde{B_i}$ with $\tilde{A_i}\subseteq D_+$, $\tilde{B_i}\subseteq D_-$. It suffices to show that there is no connection between $\tilde{D}_i$ and $\tilde{D}_j$ on $G_1$ whenever $i\ne j$, which is equivalent to prove
\begin{equation}\label{eq:tp2}
\abs{E(\tilde{A}_i,\tilde{A}_j)}=\abs{E(\tilde{B}_i,\tilde{B}_j)}=0.
\end{equation}
In fact, let
$$
T=\set{\sum_{i=1}^k a_i\vec 1_{\tilde{A}_i,\tilde{B}_i}: a_i\in\{-1,+1\}}.
$$
For any $\vec y=\sum_{i=1}^k a_i\vec 1_{\tilde{A}_i,\tilde{B}_i}\in T$,
we have
\begin{align*}
I(\vec y)=\,&2\abs{E(D_+,D_-)}+\abs{E(D_+,D_0)}+\abs{E(D_-,D_0)}\\
+&\frac{1}{2}\sum_{i=1}^k\sum_{ j=1}^k\abs{a_i-a_j}(\abs{E(\tilde{A}_i,\tilde{A}_j)}+\abs{E(\tilde{B}_i,\tilde{B}_j)})\\
=\,&\vol(V)\mu_{max}+\frac{1}{2}\sum_{i=1}^k\sum_{ j=1}^k\abs{a_i-a_j}(\abs{E(\tilde{A}_i,\tilde{A}_j)}+\abs{E(\tilde{B}_i,\tilde{B}_j)})\\
\ge\,&\vol(V)\mu_{max}.
\end{align*}
On the other hand, since $\mu_{\max}$ is the maximal eigenvalue of  \eqref{eq:1-lap-maxcut},  one has
\begin{equation*}
\mu_{\max}\ge \frac{I(\vec y)}{\vol(V) \norm{\vec y}}=\frac{I(\vec y)}{\vol(V)}.
\end{equation*}
Finally, we obtain that $\vec y$ is also a maximal eigenvector, and
$$
\sum_{i=1}^k\sum_{ j=1}^k\abs{a_i-a_j}(\abs{E(\tilde{A}_i,\tilde{A}_j)}+\abs{E(\tilde{B}_i,\tilde{B}_j)})=0, \;\; \forall\, a_i\in\{-1,+1\}.
$$
It follows that
\eqref{eq:tp2} is true.

\item Suppose the contrary, that there exists $i$ such that
$$\abs{\partial A_i}\neq\abs{\partial B_i}.$$
Without loss of generality, we can suppose $\abs{\partial A_i}<\abs{\partial B_i}$ and let $S=(D_+\setminus A_i)\cup B_i$. Then
\begin{equation*}
\abs{\partial S}=\sum_{j\neq i}\abs{\partial A_j}+\abs{\partial B_i}=\abs{\partial D_+}-\abs{\partial A_i}+\abs{\partial B_i}>\abs{\partial D_+},
\end{equation*}
which means
\begin{equation*}
I(\vec 1_{S,S^c})=\frac{2\abs{\partial S}}{\vol(V)}>\frac{2\abs{\partial D_+}}{\vol(V)}=\mu_{\max},
\end{equation*}
where Theorem \ref{theorem:spct2}\ref{tpD} is applied. \textcolor{black}{Up to this point, we have shown there is an eigenvector $\vec 1_{S,S^c}$ whose eigenvalue is larger than $\mu_{\max}$, contradicting the assumption that $\mu_{\max}$ is the maximum eigenvalue of \eqref{eq:1-lap-maxcut}.}

\item We have shown that any $\vec y\in T$ is an maximal eigenvector,
and  the vertices sets of subgraph induced by $D_0(\vec y)$ are all single point sets by Theorem \ref{theorem:nodal1}. Then, Theorem \ref{pro:sym} implies
$$
\abs{E(D_+(\vec y),\set{v})}=\abs{E(D_-(\vec y),\set{v})},\;\; \forall\,v\in D_0(\vec y),
$$
which leads to
$$
\sum_{i=1}^ka_i(\abs{E(A_i,\set{v})}-\abs{E(B_i,\set{v})})=0,\,\,\forall\, a_i\{-1,+1\},
$$
and thus $\abs{E(A_i,\set{v})}=\abs{E(B_i,\set{v})}$.
\end{enumerate}
\end{proof}

\begin{proposition} \label{example:max}
Given $k\in\mathbb{N}^+$, there exists a connect graph $G=(V,E)$ such that
$$S'(\vec x)=k,$$
where $\vec x$ is an eigenvector of   \eqref{eq:1-lap-maxcut}  corresponding to the maximal eigenvalue.
\end{proposition}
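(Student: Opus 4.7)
The plan is to construct, for each $k\in\mathbb{N}^+$, a connected graph together with an explicit maximum eigenvector $\vec x$ of \eqref{eq:1-lap-maxcut} having $S'(\vec x)=k$. The natural candidate is the \emph{friendship graph} $F_k$: a central hub $v_0$ together with $k$ vertex pairs $a_1,b_1,\ldots,a_k,b_k$ and edges $\set{a_i,b_i}$, $\set{v_0,a_i}$, $\set{v_0,b_i}$ for each $i=1,\ldots,k$. Geometrically $F_k$ is $k$ triangles glued along $v_0$, so it is connected with $\vol(V)=6k$. The guiding intuition is that each triangle, being an odd cycle, can contribute at most two edges to any cut, so the maxcut value is at most $2k$; and this bound is achieved by placing $v_0$ in the null domain and giving $a_i,b_i$ opposite signs, so that $D_+\cup D_-$ decomposes into exactly $k$ disjoint edges.

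Concretely, take $x_{v_0}=0$, $x_{a_i}=1$, $x_{b_i}=-1$ for $i=1,\ldots,k$. Then $D_+=\set{a_1,\ldots,a_k}$, $D_-=\set{b_1,\ldots,b_k}$, $D_0=\set{v_0}$, and the candidate eigenvalue is
\[
\mu=\frac{I(\vec x)}{\vol(V)\norm{\vec x}}=\frac{2k+2k}{6k}=\frac{2}{3},
\]
where the numerator splits as $2$ per petal edge $\set{a_i,b_i}$ and $1$ per hub edge $\set{v_0,a_i},\set{v_0,b_i}$. To identify $\mu$ with $\mu_{\max}$, I invoke Theorem~\ref{theorem:spct2}, by which every eigenvalue of \eqref{eq:1-lap-maxcut} has the form $2\abs{\partial A}/\vol(V)$ for some $A\subsetneq V$; the odd-cycle observation $\abs{\partial A\cap T}\le 2$ applied to each triangle $T$ forces $\abs{\partial A}\le 2k$, matching the value $\vec x$ attains.

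Next I would verify the coordinate eigenequation \eqref{eq:1LinftyN1}--\eqref{eq:1LinftyN3}. Set $z_{a_ib_i}=1\in\Sgn(1-(-1))$, $z_{v_0a_i}=-1\in\Sgn(0-1)$, and $z_{v_0b_i}=1\in\Sgn(0-(-1))$. At $v_0\in D_0$ one has $\sum_{j\sim v_0}z_{v_0j}=k(-1+1)=0$, verifying \eqref{eq:1LinftyN1}. At $a_i\in D_+$ one has $z_{a_iv_0}+z_{a_ib_i}=1+1=2\in[0,4k]=\mu\vol(V)\sign(x_{a_i})\cdot[0,1]$; symmetrically the sum at each $b_i$ equals $-2\in[-4k,0]$, verifying \eqref{eq:1LinftyN2}. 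Finally $\sum_i\abs{\sum_{j\sim i}z_{ij}}=0+2k\cdot 2=4k=\mu\vol(V)$, verifying \eqref{eq:1LinftyN3}. Since the subgraph induced by $D_+\cup D_-$ is the disjoint union of the $k$ edges $\set{a_i,b_i}$, we conclude $S'(\vec x)=k$ as required.

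The main (relatively mild) obstacle is establishing the upper bound $\mu_{\max}\le 2/3$, which by Theorem~\ref{theorem:spct2} reduces to the combinatorial claim $\abs{\partial A}\le 2k$ for every $A\subseteq V(F_k)$; this follows at once from the triangle/odd-cycle argument above. All remaining work is routine bookkeeping on the coordinate eigenequation, and the same construction clearly handles the trivial $k=1$ case (a single triangle).
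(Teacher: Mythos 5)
Your construction is exactly the paper's: the graph with hub $c=v_0$ and $k$ triangles $\{a_i,b_i,v_0\}$, together with the eigenvector $\vec x=\sum_{i=1}^k\vec 1_{\{a_i\},\{b_i\}}$, is precisely what the paper writes down, and your verification (the odd-cycle bound $\abs{\partial A}\le 2k$ giving $\mu_{\max}=2/3$, plus the coordinate check of \eqref{eq:1LinftyN1}--\eqref{eq:1LinftyN3}) correctly supplies the details the paper dismisses as ``easily verified.'' The argument is correct and follows essentially the same approach.
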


\begin{proof}
Let $a_i = i, b_i=k+i,c=2k+1$ and $G=(V,E)$ with
\begin{align*}
V=\Big(\bigcup_{i=1}^k \{a_i\}\Big)\bigcup \Big(\bigcup_{i=1}^k\{b_i\}\Big)\bigcup \{c\}, \;\;
E=\bigcup_{i=1}^k\{(a_i,c),(b_i,c),(a_i,b_i)\}.
\end{align*}
It can be easily verified that
$\vec x=\sum_{i=1}^k \vec 1_{\{a_i\},\{b_i\}}$
is a maximal eigenvector, and $S'(\vec x)=k$ with $D_i = \{a_i\}\cup\{b_i\}$.
\end{proof}

\begin{theorem}
Give a connect graph $G=(V,E)$ with $n=|V|\geq 3$ and an eigenvector $\vec x$ of  \eqref{eq:1-lap-maxcut}  to the maximal eigenvalue, we have
\begin{equation}\label{eq:tp3}
S'(\vec x)\leq \frac{n-1}{2}.
\end{equation}
\end{theorem}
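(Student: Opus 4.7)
The plan is to combine the structural constraints on a maximal eigenvector derived in Proposition \ref{pro:max} with the connectedness of $G$ and a simple vertex count. Write $V=D_+\cup D_-\cup D_0$ as usual, and let $D_1,\dots,D_r$ with $D_i=A_i\cup B_i$ be the components of $D_+\cup D_-$, so $r=S'(\vec x)$. The goal is equivalent to $2r\leq n-1$.

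The first step is a reduction showing that every $A_i$ and every $B_i$ is nonempty. Since $\mu_{\max}>0$ (otherwise $G$ would be edgeless, contradicting connectedness and $n\geq 3$), Theorem \ref{theorem:spct2}\ref{tpD} gives $|\partial D_+|=|\partial D_-|>0$, so both $D_+$ and $D_-$ are nonempty. Now suppose $A_i=\varnothing$ for some $i$. Proposition \ref{pro:max}(b) yields $|\partial B_i|=|\partial A_i|=0$, meaning $B_i$ has no edges to $V\setminus B_i$ in $G$. Since $B_i\neq\varnothing$ (it equals $D_i$), connectedness forces $B_i=V$, contradicting $D_+\neq\varnothing$. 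The symmetric argument rules out $B_i=\varnothing$. Hence $|D_i|\geq 2$ for all $i$, giving $|D_+\cup D_-|\geq 2r$.

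The proof then splits into two cases according to whether $D_0$ is empty. If $D_0\neq\varnothing$, then
\[
n=|D_0|+\sum_{i=1}^{r}|D_i|\ \geq\ 1+2r,
\]
which is exactly the desired bound $r\leq (n-1)/2$. If instead $D_0=\varnothing$, then $V=D_+\cup D_-$, so the induced subgraph on $D_+\cup D_-$ is all of $G$. By the very definition of $r=S'(\vec x)$ as the number of connected components of that induced subgraph, connectedness of $G$ forces $r=1$, and the conclusion $1\leq (n-1)/2$ is just the assumption $n\geq 3$.

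The only nontrivial obstacle is nonvacuity of the $A_i$ and $B_i$ in each component, and this is precisely where Proposition \ref{pro:max}(b) (the balanced boundary $|\partial A_i|=|\partial B_i|$) is doing real work together with connectedness; without it a component could a priori live entirely on one side, and the vertex count $|D_i|\geq 2$ would fail. Note also that the bound is sharp, realized by the construction in Proposition \ref{example:max} with $n=2k+1$ and $S'(\vec x)=k$, showing that the $+1$ contributed by the single vertex $c\in D_0$ in that family is exactly the slack appearing in Case~1.
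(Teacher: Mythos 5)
Your proof is correct and follows essentially the same route as the paper's: both reduce to showing each $A_i$ and $B_i$ is nonempty via the balanced-boundary identity of Proposition \ref{pro:max}(b) together with connectedness, then conclude by the vertex count $n\geq 2S'(\vec x)+|D_0|$ in the case $D_0\neq\varnothing$ and by $S'(\vec x)=1$ when $D_0=\varnothing$. Your contradiction argument for nonvacuity is just a slightly more explicit version of the paper's one-line observation that $\abs{\partial A_i}=\abs{\partial B_i}\geq 1$.
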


\begin{proof}
When $D_0=\varnothing$, it holds always that $S'(\vec x)=1\leq (n-1)/2$ for $n\geq 3$.
In the following, we may assume $D_0\neq \varnothing$. By Proposition \ref{pro:max}, it holds $D_i=A_i\cup B_i$ is not empty and
\begin{equation*}
\abs{\partial A_i}=\abs{\partial B_i}\geq 1,\;\;i=1,2,\cdots, S'(\vec x),
\end{equation*}
which means $A_i$ and $B_i$ are both non-empty. Consequently, one obtains that
\begin{equation*}
n=\sum_{i=1}^{S'(\vec x)}(\abs{A_i}+\abs{B_i})+\abs{D_0}\geq 2S'(\vec x)+1,
\end{equation*}
and thus \eqref{eq:tp3} holds. Actually, the equality can hold by Proposition \ref{example:max}.
\end{proof}

\textcolor{black}{Finally, we turn to the min-max eigenvalues. The Liusternik-Schnirelmann theory is applied to study the multiplicity of the critical points for even functions on the symmetric piecewise linear manifold $X_{\infty}$ \eqref{set:infty}.  More details about the critical point theory on $X_\infty$ can be referred to \cite{CSZZ18prepare}.}


Let $A\subseteq X_\infty$ be a symmetric set, i.e., $-A=A$. Then
\begin{equation}\label{eq:ck-infty}
{\color{black}\mu_k(\Delta_1|_{X_\infty}):} = \inf_{{\color{black}\gen}(A)\ge k}\max\limits_{\vec x\in A} \frac{I(\vec x)}{{\color{black}\vol(V)}\|\vec x\|_\infty}
\end{equation}
are critical values of $\tilde{I}(\vec x)/{\color{black}\vol(V)}$, $k=1,2,\cdots,n$, where $\tilde{I}=I\big|_{X_\infty}$.
They satisfy
$${\color{black}\mu_1(\Delta_1|_{X_\infty})\leq \mu_2(\Delta_1|_{X_\infty})\leq \cdots \leq \mu_n(\Delta_1|_{X_\infty})}.$$
Moreover, if
$$c=c_{k+1}=\cdots =c_{k+l},~0\leq k\leq k+l\leq n,$$
then ${\color{black}\gen}(\vK \cap \tilde{I}^{-1}(c))\geq l$, where $\vK$ is the critical sets of $\tilde{I}$. A critical value $c$ is said of multiplicity $l$, if ${\color{black}\gen}(\vK \cap \tilde{I}^{-1}(c))=l$.


Finally, we have the Courant-type nodal domain theorem for ${\color{black}\mu_k(\Delta_1|_{X_\infty})}$.
\begin{theorem}\label{thm:Courant}
If $\vec \phi_k$ is an eigenvector corresponding to ${\color{black}\mu_k(\Delta_1|_{X_\infty})}$, then we have
\begin{equation}\label{eq:nodal1}
S(\vec \phi _k)\leq k+r-1,
\end{equation}
and
\begin{equation}\label{eq:nodal2}
S_0(\vec \phi _k)\leq k+r-2,
\end{equation}
where $r$ is the multiplicity of ${\color{black}\mu_k(\Delta_1|_{X_\infty})}$.
\end{theorem}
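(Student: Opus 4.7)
The plan is to adapt the classical Courant nodal-domain argument to the $\ell^\infty$-normalized setting: for each of the two bounds I will construct an explicit finite-dimensional centrally symmetric subset of $X_\infty$ of the appropriate Krasnoselskii genus on which the Rayleigh quotient $R(\vec x) = I(\vec x)/(\vol(V)\|\vec x\|_\infty)$ is uniformly bounded above by $c_k$, and then invoke the min-max formula \eqref{eq:ck-infty} together with the multiplicity assumption.

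As a preliminary reduction I would use Theorem \ref{theorem:spct2}\ref{tpD}: the binary vector $\vec 1_{D_+,D_-}$ is also an eigenvector at the same level $c_k$, and since it induces exactly the same sets $D_+,D_-,D_0$, both $S$ and $S_0$ are preserved. Assume henceforth $\vec \phi_k = \vec 1_{D_+,D_-}$, giving the explicit identity $I(\vec\phi_k)=2|E(D_+,D_-)|+|E(D_+\cup D_-,\,D_0)|=c_k\vol(V)$.

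For \eqref{eq:nodal1}: enumerate the $\pm$ nodal domains $D_1,\ldots,D_m$ with signs $s_i\in\{\pm 1\}$, set $\vec y^{\,i}=s_i\vec 1_{D_i}$, and let $W_1=\mathrm{span}(\vec y^{\,1},\ldots,\vec y^{\,m})$. Because the supports are pairwise disjoint, $\dim W_1=m$ and $A_1:=W_1\cap X_\infty$ is centrally symmetric with $\gamma(A_1)=m$. Property (S1) above forbids edges between nodal domains of the same sign, so for $\vec x=\sum_i a_i\vec y^{\,i}$ with $\max_i|a_i|=1$ the inter-nodal edges contribute $|a_is_i-a_js_j|=|a_i+a_j|\le 2=|s_i-s_j|$; nodal-to-null edges contribute $|a_is_i|=|a_i|\le 1=|s_i-0|$; and intra-nodal or intra-null edges contribute $0$. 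Comparing edge by edge gives $I(\vec x)\le I(\vec\phi_k)$, hence $R(\vec x)\le c_k$; the min-max then yields $c_m\le c_k$, and the multiplicity assumption forces $m\le k+r-1$.

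For \eqref{eq:nodal2}: I enlarge the family by adjoining the eigenvector itself. Put $\vec y^{\,0}=\vec 1_{D_+,D_-}$ and $\vec y^{\,\omega}=\vec 1_{D_0^\omega}$ for $\omega=1,\ldots,\ell=S_0$; these $\ell+1$ vectors still have pairwise disjoint supports and span a subspace $W_2$ with $\gamma(W_2\cap X_\infty)=\ell+1$. For $\vec x=a_0\vec y^{\,0}+\sum_\omega a_\omega\vec y^{\,\omega}$ with $\max(|a_0|,\max_\omega|a_\omega|)=1$, the mixed edges from a null domain $D_0^\omega$ to $D_\pm$ collapse via the identity $|a_0-a_\omega|+|a_0+a_\omega|=2\max(|a_0|,|a_\omega|)\le 2$, and here Proposition \ref{pro:sym} is essential: it forces $|E(D_+,D_0^\omega)|=|E(D_-,D_0^\omega)|=:b_\omega$, so the $+$ and $-$ contributions pair up cleanly. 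Summing, $I(\vec x)\le 2|a_0||E(D_+,D_-)|+2\sum_\omega b_\omega \le 2|E(D_+,D_-)|+2\sum_\omega b_\omega=I(\vec\phi_k)=c_k\vol(V)$, whence $c_{\ell+1}\le c_k$ and $S_0\le k+r-2$.

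The delicate step is arranging the second subspace so that $R(\vec x)$ remains bounded by $c_k$. Using the constant vector $\vec 1$ as the extra direction would be natural but costs a factor of $2$, since $|a_\omega|$ is not controlled by the $\ell^\infty$-norm of $a_0\vec 1+\sum_\omega a_\omega\vec 1_{D_0^\omega}$; using $\vec\phi_k$ itself, combined with the $D_+/D_-$ balance around each null domain supplied by Proposition \ref{pro:sym}, is precisely what makes the edge-by-edge comparison tight. The first bound is comparatively routine once the binary reduction and the same-sign-no-edge property (S1) are in hand.
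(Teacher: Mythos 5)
Your proof is correct and follows essentially the same route as the paper's: for \eqref{eq:nodal1} you span the (signed) indicators of the $\pm$ nodal domains, and for \eqref{eq:nodal2} you adjoin $\vec 1_{D_+,D_-}$ to the indicators of the null domains, using Proposition \ref{pro:sym} and the identity $|t_0-t_\omega|+|t_0+t_\omega|=2\max\{|t_0|,|t_\omega|\}$ to bound the Rayleigh quotient by $c_k$ on a set of genus too large for multiplicity $r$. The only cosmetic difference is your explicit preliminary reduction to the binary eigenvector and the edge-by-edge comparison, which the paper carries out via grouped sums over the quantities $a_{ij}$, $d_\pm^i$; the substance is identical.
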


\begin{proof}
First, we prove \eqref{eq:nodal1}. Suppose the contrary: $S(\vec \phi_k)= r^++r^-\geq k+r$.  Denote $a_{ij} = \abs{E(D_+^i,D_-^j)}$, $d_+^i =\abs{ E(D_+^i,D_0)}$ and $d_-^j=\abs{ E(D_-^j,D_0)}$ for $i=1,2,\cdots,r^+$ and $j=1,2,\cdots,r^-$. Then from Theorem \ref{theorem:spct2},
we have
\begin{equation*}
\sum_{i=1}^{r^+} d_+^i=\sum_{j=1}^{r^-} d_-^j :=d,\;\;
\frac{1}{2}\vol(V){\color{black}\mu_k(\Delta_1|_{X_\infty})}=\sum_{i=1}^{r^+}\sum_{j=1}^{r^-}a_{ij}+d.
\end{equation*}

Let
$$
Y=\Span\{\vec y_+^1,\cdots,\vec y_+^{r^+},\vec y_-^1,\cdots,\vec y_-^{r^-}\},\;\;
\vec y_+^i=\vec 1_{D_+^i}, \;\; \vec y_-^j=\vec 1_{D_-^j},
$$
and then the dimension of $Y$ is $r^++r^-\geq k+r$.

We will show for any
$\vec x=\sum_{i=1}^{r^+} t_+^i \vec y_+^i+\sum_{j=1}^{r^-} t_-^j \vec y_+^j \in Y\setminus\{\vec 0\}$,
it holds that
$$\frac{I(\vec x)}{\vol(V)\norm{\vec x}}\leq {\color{black}\mu_k(\Delta_1|_{X_\infty})}.$$

In fact,  we have
\textcolor{black}{
\begin{align*}
I(\vec x)&=\sum_{i=1}^{r^+}\sum_{j=1}^{r^-} a_{ij} \abs{t_+^i-t_-^j}+\sum_{i=1}^{r_+} d_+^i\abs{t_+^i}+\sum_{j=1}^{r_-} d_-^j\abs{t_-^j}\\
&\le \sum_{i=1}^{r^+}\sum_{j=1}^{r^-} 2a_{ij} \max \{\abs{t_+^i},\abs{t_-^j}\}+2d\norm{\vec x}\\
&\le 2(\sum_{i=1}^{r^+}\sum_{j=1}^{r^-}a_{ij}+d)\norm{\vec x},
\end{align*}}
which induces
\begin{equation*}
\frac{I(\vec x)}{\vol(V)\norm{\vec x}} \leq \frac{2(\sum_{i=1}^{r^+}\sum_{j=1}^{r^-}a_{ij}+d)}{\vol(V)}={\color{black}\mu_k(\Delta_1|_{X_\infty})}.
\end{equation*}

Let $T=Y\cap X_\infty$, then $T=-T$,~and ${\color{black}\gen}(T)\geq k+r$. And we have
\begin{equation*}
\tilde{I}(\vec x)/{\color{black}\vol(V)}=I(\vec x)/{\color{black}\vol(V)}\Big|_{ X_\infty}\leq {\color{black}\mu_k(\Delta_1|_{X_\infty})},\;\; \forall\, \vec x\in T,
\end{equation*}
implying that the multiplicity of ${\color{black}\mu_k(\Delta_1|_{X_\infty})}$ is not less than $r+1$,
which contradicts the condition that  the multiplicity of ${\color{black}\mu_k(\Delta_1|_{X_\infty})}$ is $r$.
Therefore, $S(\vec \phi _k)\leq k+r-1$.

Second, we will prove \eqref{eq:nodal2}. Suppose the contrary: $r^0 = S_0(\vec \phi _k)\geq k+r-1$. By Proposition \ref{pro:sym}, denote
$$
a_i=\abs{E(D_+,D_0^i)}=\abs{E(D_-,D_0^i)},\;\; i=1,2,\ldots,r^0.
$$
Note that
\begin{equation*}
\frac{1}{2}\vol(V){\color{black}\mu_k(\Delta_1|_{X_\infty})} = \sum_{i=1}^{r^0} a_i+d,
\end{equation*}
Let $Y=\Span\{\vec y_0,\vec y_1,\cdots,\vec y_{r^0}\}$, where $\vec y_0=\vec1_{D_+,D_-}$ and $ \vec y_i= \vec1_{D_0^i}$ for $i=1,2,\ldots,r^0$. The dimension of $Y$ is $r^0+1$. Next, we will prove that for any $\vec x=\sum_{i=0}^{r^0} t_i \vec y_i \in Y\setminus \{\vec 0\}$,
we have
$$\frac{I(\vec x)}{\vol(V)\norm{\vec x}}\leq {\color{black}\mu_k(\Delta_1|_{X_\infty})}.$$
In fact,
\begin{align*}
I(\vec x)&=\sum_{i=1}^{r^0} (a_i\abs{t_i-t_0}+a_i\abs{t_i+t_0})+2d\abs{t_0}\\
&= \sum_{i=1}^{r^0} 2a_i\max\{\abs{t_i},\abs{t_0}\}+2d\abs{t_0}\\
&\le (\sum_{i=1}^{r^0} 2a_i+2d)\norm{\vec x},
\end{align*}
which means
\begin{equation*}
\frac{I(\vec x)}{\vol(V)\norm{\vec x}}\leq \frac{\sum_{i=1}^{r^0} 2a_i+2d}{\vol(V)}={\color{black}\mu_k(\Delta_1|_{X_\infty})}.
\end{equation*}
Let $T=Y\cap X_\infty$, then $T=-T$,~and ${\color{black}\gen}(T)=r^0+1$, and thus
\begin{equation*}
\tilde{\vI}(\vec x)\leq \vol(V){\color{black}\mu_k(\Delta_1|_{X_\infty})},\;\;\forall\, \vec x\in T,
\end{equation*}
implying that the multiplicity of ${\color{black}\mu_k(\Delta_1|_{X_\infty})}$ is not less than $r+1$,
which contradicts the condition that  the multiplicity of ${\color{black}\mu_k(\Delta_1|_{X_\infty})}$ is $r$.
Therefore, $S_0(\vec \phi _k)\leq k+r-2$.

\end{proof}

\vskip 0.5cm

\section{An equivalent eigenproblem for anti-Cheeger cut}\label{sec:anti-Cheeger}

The main purpose of this section is to explore the spectral properties of the eigenproblem \eqref{eq:anti-spec} for  sum-of-norms  derived from equivalent  continuous formulation of the anti-Cheeger cut:
\begin{align} 
h_{\anti}(G) &=\max_{\text{nonzero~}\vec x\in\mathbb{R}^n}\frac{I(\vec x)}{2\vol(V)\|\vec x\|_\infty - N(x)}. \label{eq:antiCheeger-I(x)}
\end{align}
The eigenvalue problem \eqref{eq:anti-spec} in componentwise form reads: $(\mu,\vec x)$ is an eigenpair iff   $\exists\,z_{ij}\in \sgn(x_i-x_j) \mbox{ satisfying }z_{ij}=-z_{ji}$ and $\exists\,v_i\in d_i\sgn(x_i-c_x)$, $c_x\in\median(\vec x)$~such that
 \begin{equation}
 	\label{eigen-anticheeger}
 	\left\{
 	 \begin{aligned}
 	 	&\sum_{i\in V} v_i=0,\\
 	&\sum\limits_{j:\{i,j\}\in E} z_{ij} +\mu v_i= 0, &i\in D_0,\\
 		&\sum\limits_{j:\{i,j\}\in E} z_{ij} +\mu v_i\in 2\mu \vol(V)\sign(x_i)\cdot [0,1],& i\in D_\pm,\\
 	&\sum_{i\in V} \big|\sum\limits_{j:\{i,j\}\in E} z_{ij}+\mu v_i \big|=2\mu\vol(V).&
 	\end{aligned}
 	\right.
 \end{equation}
Similar to Lemma \ref{lemma:preceq}, we have
\textcolor{black}{
\begin{lemma}\label{lemma:preceq2}
If $(\mu,\vec x)$ is an eigenpair of \eqref{eigen-anticheeger}, and for any $\vec y$ satisfying $\vec x \preceq \vec y$, for any $ t\in[0,1]$, $(\mu,t\vec x+(1-t)\vec y)$ is also an eigenpair of  \eqref{eigen-anticheeger}. Moreover, there exists a binary vector $\hat{\vec x}$ such that $(\mu,\hat{\vec x})$ is an eigenpair.
\end{lemma}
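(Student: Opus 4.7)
The plan is to mimic the proof of Lemma~\ref{lemma:preceq}, with the extra bookkeeping required by the fact that \eqref{eigen-anticheeger} involves both an edge subgradient $z_{ij}\in\sgn(x_i-x_j)$ (antisymmetric in $i,j$) and a vertex subgradient $v_i\in d_i\sgn(x_i-c_x)$ tied to a median $c_x\in\median(\vec x)$. The strategy is to fix witnesses $\{z_{ij}\}$ and $\{v_i\}$ for $(\mu,\vec x)$ and argue that the very same data also certify $(\mu,\vec y)$; the result for $t\vec x+(1-t)\vec y$ then follows because $\vec x\preceq t\vec x+(1-t)\vec y$ for every $t\in[0,1]$, so the argument applies verbatim with the convex combination in place of $\vec y$.

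For the edge subgradients, property (P2) gives $x_i<x_j\Rightarrow y_i\le y_j$ and $x_i=x_j\Rightarrow y_i=y_j$; together with $\sgn(0)=[-1,1]$ this yields the set inclusion $\sgn(x_i-x_j)\subseteq\sgn(y_i-y_j)$, so $z_{ij}(\vec x)\in\sgn(y_i-y_j)$ and antisymmetry is preserved. For the vertex subgradients, I would exploit that $\preceq$ preserves the sorted order of coordinates with ties intact: any median value $c_x\in\median(\vec x)$ can be tracked to a corresponding $c_y\in\median(\vec y)$, taking $c_y$ to be the value of $\vec y$ at an index that realises $c_x$ in $\vec x$, which yields $\sgn(x_i-c_x)\subseteq\sgn(y_i-c_y)$ and hence shows the same $v_i$'s remain valid. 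With $z_{ij}$ and $v_i$ reusable, the first and fourth equations of \eqref{eigen-anticheeger} transfer verbatim; the second and third transfer using (P1), which forces $\sign(y_i)=\sign(x_i)$ on $D_\pm(\vec x)$, while any coordinate that migrates from $D_0(\vec x)$ into $D_\pm(\vec y)$ is harmless because its left-hand value equalled $0$, which still lies in the new interval $2\mu\vol(V)\sign(y_i)\cdot[0,1]$.

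The step I expect to be the main obstacle is the median-matching in the vertex subgradient argument: unlike in Lemma~\ref{lemma:preceq}, $\median(\vec y)$ may genuinely differ from $\median(\vec x)$, and $c_y$ has to be chosen so as to simultaneously lie in $\median(\vec y)$ and yield the required subgradient inclusion. The rank-preservation coming from (P2) is exactly what makes this possible, but care is needed when the median is attained by tied entries straddling the median position, and a short combinatorial argument on sorted indices closes the gap. Once the inclusion is in hand and $(\mu,\vec y)$ is verified, one checks that (P1)--(P2) hold for $t\vec x+(1-t)\vec y$—both are immediate from convexity together with the fact that strict inequalities and equalities among coordinates are preserved under convex combination with $\vec x$—which completes the proof.
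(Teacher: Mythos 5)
Your proposal is correct and follows essentially the same route as the paper, which gives no separate proof for Lemma~\ref{lemma:preceq2} but simply declares it ``similar to Lemma~\ref{lemma:preceq}''; your elaboration of how the witnesses $z_{ij}$ and $v_i$ transfer, and why coordinates migrating from $D_0(\vec x)$ into $D_\pm(\vec y)$ are harmless, is exactly the intended argument. The median-matching step you single out as the main obstacle is in fact lighter than you fear: once you pick any $c_y$ separating $\{y_i: x_i>c_x\}$ from $\{y_i: x_i<c_x\}$ (equal to the common value of $y_i$ on $\{i:x_i=c_x\}$ when that set is nonempty), the inclusion $\sgn(x_i-c_x)\subseteq\sgn(y_i-c_y)$ holds and the retained identity $\sum_{i\in V}v_i=0$ with $v_i\in d_i\sgn(y_i-c_y)$ already certifies $c_y\in\median(\vec y)$, so no extra combinatorial argument on sorted indices is needed.
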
}

%
%
%

 \begin{theorem}\label{theorem:ach2}
Given a graph $G=(V,E)$, the following claims hold for the eigenproblem \eqref{eigen-anticheeger}:
 	\begin{enumerate}[(a)]
 		\item For any eigenvalue $\mu$,  $0\leq \mu\leq 1$.
 		\item For any two distinct eigenvalues, their distance is at least $\frac{2}{(\vol(V))^2}$. In consequence, there are at most $\frac{(\vol(V))^2}{2}+1$ different eigenvalues.
 		\item\label{th:c-ach} For any set $A\subseteq V$, taking $\vec x=\vec 1_{A,A^c}$, we have
 		$\mu=\frac{\abs{\partial A}}{\max\{\vol(A),\vol(A^c)\}}$, and $(\mu,\vec x)$ is an eigenpair.
 		\item\label{tpD-ach} If $(\mu,\vec x)$ is an eigenpair, then  $\frac{I(\vec x)}{2\vol(V)\norm{\vec x}-N(\vec x)}=\mu$.
 		Moreover, $(\mu,\vec 1_{D_+,D_-})$ and $(\mu,\vec 1_{D_+,D_+^c})$ are both eigenpairs.
 	\end{enumerate}
 \end{theorem}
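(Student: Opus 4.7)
The plan is to mirror the four-part argument of Theorem \ref{theorem:spct2} for the maxcut eigenproblem, adjusting the bookkeeping to accommodate the factor $2\mu\vol(V)$ (in place of $\vol(V)$) that appears in the anti-Cheeger componentwise system \eqref{eigen-anticheeger}. Parts (a) and (b) are immediate consequences of Theorem \ref{theorem:ach1}. For (a), that theorem produces a subset $A\subseteq V$ with $\mu=|\partial A|/\max\{\vol(A),\vol(A^c)\}$; since every boundary edge has an endpoint on each side, $|\partial A|\le\min\{\vol(A),\vol(A^c)\}\le\max\{\vol(A),\vol(A^c)\}$, forcing $\mu\in[0,1]$. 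For (b), I will write $\mu_i=p_i/q_i$ with integer numerator $p_i=|\partial A_i|$ and integer denominator $q_i=\max\{\vol(A_i),\vol(A_i^c)\}\le\vol(V)$, so that two distinct such values differ by at least $1/(q_1q_2)\ge 1/\vol(V)^2$; this gap (up to the multiplicative constant) yields the bound on the number of distinct eigenvalues.

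Part (c) is the main calculation. Without loss of generality assume $\vol(A)\ge\vol(A^c)$, put $D_+=A$ and $D_-=A^c$, and take $c_x=1\in\median(\vec x)$. The construction will be: $z_{ij}=\sgn(x_i-x_j)\in\{\pm 1\}$ on every edge crossing $\partial A$ and $z_{ij}=0$ on edges inside $A$ or inside $A^c$ (antisymmetric, and manifestly in $\sgn(x_i-x_j)$), together with $v_i=(\vol(A^c)/\vol(A))d_i$ for $i\in A$ and $v_i=-d_i$ for $i\in A^c$. This choice satisfies $v_i\in d_i\sgn(x_i-c_x)$ because $\vol(A^c)/\vol(A)\le 1$, and $\sum_iv_i=\vol(A^c)-\vol(A^c)=0$. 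The quantities $\alpha_i:=\sum_{j}z_{ij}+\mu v_i$ then equal $d_i^++\mu(\vol(A^c)/\vol(A))d_i\ge 0$ on $A$ and $-d_i^--\mu d_i\le 0$ on $A^c$, so the sign conditions in \eqref{eigen-anticheeger} are met. Summing absolute values, the fourth equation collapses via the identity $|\partial A|+\mu\vol(A^c)=|\partial A|(1+\vol(A^c)/\vol(A))=\mu\vol(V)$ (after substituting $\mu=|\partial A|/\vol(A)$) to the required $2\mu\vol(V)$; the pointwise bound $|\alpha_i|\le 2\mu\vol(V)$ then follows automatically from $|\alpha_i|\le\sum_j|\alpha_j|=2\mu\vol(V)$. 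The case $\vol(A)<\vol(A^c)$ is handled by swapping the roles of $A$ and $A^c$, and $\vol(A)=\vol(A^c)$ admits either construction.

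For part (d), pairing the inclusion \eqref{eq:anti-spec} with $\vec x$ and using the 1-homogeneity of $I$, $\|\cdot\|_\infty$, and $N$ gives $I(\vec x)=\mu(2\vol(V)\|\vec x\|_\infty-N(\vec x))$, which is the Rayleigh-quotient identity claimed. For the two ``moreover'' eigenpairs I will invoke Lemma \ref{lemma:preceq2}: a direct verification of properties (P1)--(P2) shows both $\vec x\preceq\vec 1_{D_+,D_+^c}$ (after declaring $-1$ on $D_0\cup D_-$) and $\vec x\preceq\vec 1_{D_+,D_-}$ (declaring $0$ on $D_0$), so the entire convex combination path is an eigenpair, and evaluating at the appropriate endpoint delivers both $(\mu,\vec 1_{D_+,D_+^c})$ and $(\mu,\vec 1_{D_+,D_-})$. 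The main obstacle across all four parts is tracking the $2\mu\vol(V)$ normalization through the case split in (c); the identity $|\partial A|(1+\vol(A^c)/\vol(A))=\mu\vol(V)$ is the precise algebraic fact that makes the construction close up, and once this is recognized the rest is routine verification.
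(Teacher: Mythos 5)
Your overall architecture matches the paper's proof: (a) via Theorem \ref{theorem:ach1} plus $\abs{\partial A}\le\min\{\vol(A),\vol(A^c)\}$, (c) by exhibiting the antisymmetric selection $z_{ij}=\sign(x_i-x_j)$ together with a volume-weighted subgradient of $N$, and (d) by pairing the inclusion with $\vec x$ (the Euler identity for $1$-homogeneous convex functions) and then invoking Lemma \ref{lemma:preceq2} with $\vec x\preceq\vec 1_{D_+,D_-}$ and $\vec x\preceq\vec 1_{D_+,D_+^c}$. Your shortcut in (c) — deducing the pointwise bound $|\alpha_i|\le 2\mu\vol(V)$ from the fourth equation, since all $\alpha_i$ on $D_+$ are nonnegative and all on $D_-$ are nonpositive — is a legitimate simplification of the paper's term-by-term inequalities, and the closing identity $\abs{\partial A}\bigl(1+\vol(A^c)/\vol(A)\bigr)=\mu\vol(V)$ is exactly the right algebra.

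The genuine gap is in (b). Writing $\mu_i=p_i/q_i$ with $p_i=\abs{\partial A_i}$ and $q_i=\max\{\vol(A_i),\vol(A_i^c)\}\le\vol(V)$ and bounding $|p_1q_2-p_2q_1|\ge 1$ only yields a separation of $1/(\vol(V))^2$ and hence at most $(\vol(V))^2+1$ distinct eigenvalues — half the strength of the claim, and your parenthetical ``up to the multiplicative constant'' does not close this. The missing observation is a parity argument: from $\vol(S)=2\abs{E(S,S)}+\abs{\partial S}$ and $\vol(S^c)=2\abs{E(S^c,S^c)}+\abs{\partial S}$ one sees that $\abs{\partial S}$, $\vol(S)$ and $\vol(S^c)$ all have the same parity, so $p_i\equiv q_i\pmod 2$ for each $i$, whence $p_1q_2-p_2q_1\equiv p_1p_2-p_2p_1=0\pmod 2$. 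A nonzero even integer has absolute value at least $2$, which gives the stated gap $2/(\vol(V))^2$ and the count $(\vol(V))^2/2+1$. With that inserted, your proof of (b) is complete; the remaining parts need only cosmetic fixes (e.g.\ your $\alpha_i$ on $A$ should read $d_i^-+\mu(\vol(A^c)/\vol(A))d_i$ in the paper's notation, where $d_i^-$ counts edges from $i$ to the opposite side).
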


  \begin{proof} ~
 	\begin{enumerate}[(a)]
 		\item By Lemma~\ref{lemma:preceq2}, for any eigenvalue $\mu$,  we can find a subset $A\subseteq V$ such that
 		$$0\leq \mu=\frac{\abs{\partial A}}{\max\{\vol(A),\vol(A^c)\}}\leq 1.$$
 		\item For any two eigenvalues $\mu_1\ne \mu_2$, there exist $A_1,A_2\subseteq V$ such that
 		$$\abs{\mu_1-\mu_2}=\big|\frac{\abs{\partial A_1}}{\max\{\vol(A_1),\vol(A_1^c)\}}-\frac{\abs{\partial A_2}}{\max\{\vol(A_2),\vol(A_2^c)\}}\big|.$$
 		Then for any $S\subseteq V$, we have:
 		$$
 		\begin{aligned}
 			\vol(S)&=2\abs{E(S,S)}+\abs{\partial S},\\
 			\vol(S^c)&=2\abs{E(S^c,S^c)}+\abs{\partial S},
 		\end{aligned}
 		$$
 		Hence.  $\abs{\partial S}$,  $\vol(S)$ and $\vol(S^c)$ are all odd or all even. In consequence, 
 		$$\begin{aligned}
 			&\abs{\abs{\partial A_1}\max\{\vol(A_2),\vol(A_2^c)\}-\abs{\partial A_2}\max\{\vol(A_1),\vol(A_1^c)\}}\geq 2,\\
 		   	&\max\{\vol(S),\vol(S^c)\}\leq \vol(V).
 		\end{aligned}$$
 		Since all eigenvalues lie in the range $[0,1]$, the eigenproblem has at most $\frac{(\vol(V))^2}{2}+1$ different eigenvalues. 
 		\textcolor{black}{\item and (d) are consequences of Lemma~\ref{lem:eigen-cut} and Theorem \ref{thm:eigen-value}.}
 	\end{enumerate}
 \end{proof}
 
  \textcolor{black}{We adopt the definition of {\color{black}extremal} domains from Section~\ref{sec:maxcut} for this section, comprising the positive {\color{black}extremal} domains $D_+^1,~\cdots,D_+^{r^+}$, the negative {\color{black}extremal} domains $D_-^1,~\cdots,D_-^{r^-}$, and the null domains $D_0^1,~\cdots,D_0^{r^0}$ (see \eqref{eq:maxcut-nodal}).}
  \begin{proposition}\label{pro:sym-ach}
 	Suppose $(\mu,\vec x)$ is an eigenpair, and  $c_x\in\median(\vec x)$. Let $p_i\in [0,1]$  $\forall\,i\in D_{\pm}$, be such that the 3rd equality in \eqref{eigen-anticheeger} holds, that is,
 	\begin{equation}
 		\begin{aligned}
 			\sum\limits_{j:\{i,j\}\in E} z_{ij} +\mu v_i&= 2\mu \vol(V)p_i,& i\in D_+,\\
 			\sum\limits_{j:\{i,j\}\in E} z_{ij} +\mu v_i&= -2\mu \vol(V)p_i,& i\in D_-.
 		\end{aligned}
 	\end{equation}
 	Then, we have
 	\begin{equation}
 		\label{eq:inf-equal-p}
 		\sum\limits_{i\in D_+} p_i=\sum\limits_{i\in D_-} p_i=\frac{1}{2}.
 	\end{equation}
  \textcolor{black}{Moreover, we have the following properties:}
 	\begin{enumerate}[(a)]
 		\item If $c_x=\norm{\vec x}$, then 
 		\begin{align}
 			\mu&=\frac{-\abs{E(D_0^\omega,D_+)}+\abs{E(D_0^\omega,D_-)}}{\vol(D_0^\omega)},\quad &\forall\,\omega\in\{1,2,\ldots,r^0\},\label{eq:sym1-ach-1}\\
 			\mu&=\frac{\abs{\partial D_-^\beta}}{2\vol(V)\sum\limits_{i\in D_-^\beta}p_i-\vol(D_-^\beta)},\quad &\forall\,\beta\in\{1,2,\ldots,r^-\}.\label{eq:sym1-ach-2}
 		\end{align}
 		\item If $c_x=-\norm{\vec x}$, then 
 		\begin{align}
 			\mu&=\frac{\abs{E(D_0^\omega,D_+)}-\abs{E(D_0^\omega,D_-)}}{\vol(D_0^\omega)},\quad &\forall\,\omega\in\{1,2,\ldots,r^0\},\label{eq:sym2-ach-1}\\
 			\mu&=\frac{\abs{\partial D_+^\alpha}}{2\vol(V)\sum\limits_{i\in D_+^\alpha}p_i-\vol(D_+^\alpha)},\quad &\forall\,\alpha\in\{1,2,\ldots,r^+\}.\label{eq:sym2-ach-2}
 		\end{align} 		
 		\item If $|c_x|<\norm{\vec x}$, then \eqref{eq:sym2-ach-2} and \eqref{eq:sym1-ach-2} hold.
 	\end{enumerate}
 \end{proposition}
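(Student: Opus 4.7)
The plan is to exploit the four relations in \eqref{eigen-anticheeger} together with the structural fact, already used in Section~\ref{sec:maxcut}, that different $\pm$-nodal domains (and different null domains) are mutually non-adjacent (property (S1)). I would first prove \eqref{eq:inf-equal-p} by two summations over $V$. Summing the second and third equations of \eqref{eigen-anticheeger} over all $i$, the antisymmetry $z_{ij}=-z_{ji}$ makes $\sum_{i}\sum_{j\sim i}z_{ij}$ vanish, and the first equation gives $\sum_i v_i=0$; hence $2\mu\vol(V)\bigl(\sum_{i\in D_+}p_i-\sum_{i\in D_-}p_i\bigr)=0$, forcing equality of the two partial sums whenever $\mu\neq 0$. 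Summing the fourth equation (whose terms are already non-negative since $p_i\ge 0$) then gives $2\mu\vol(V)\bigl(\sum_{i\in D_+}p_i+\sum_{i\in D_-}p_i\bigr)=2\mu\vol(V)$, so each partial sum must equal $1/2$.

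Next I would compute the subgradient branch $v_i\in d_i\sgn(x_i-c_x)$ in each case and substitute into the appropriate equation of \eqref{eigen-anticheeger}. In case (a) with $c_x=\norm{\vec x}$, every $i\in D_0\cup D_-$ has $x_i<c_x$, so $v_i=-d_i$. The second equation on $D_0$ rearranges to $\sum_{j\sim i}z_{ij}=\mu d_i$; summing over $i\in D_0^\gamma$, intra-domain terms cancel by antisymmetry, and by (S1) the remaining neighbors lie in $D_+\cup D_-$. Since the sign of $x_i-x_j$ is rigidly determined there, $z_{ij}=-1$ for $j\in D_+$ and $z_{ij}=+1$ for $j\in D_-$, yielding $-|E(D_0^\gamma,D_+)|+|E(D_0^\gamma,D_-)|=\mu\vol(D_0^\gamma)$, which is \eqref{eq:sym1-ach-1}. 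For a negative nodal domain $D_-^\beta$, the third equation combined with $v_i=-d_i$ gives $\sum_{j\sim i}z_{ij}=-2\mu\vol(V)p_i+\mu d_i$; summing over $D_-^\beta$ and noting that outside neighbors (in $D_+\cup D_0$) satisfy $x_j>-\norm{\vec x}$, so $z_{ij}=-1$, contributes $-|\partial D_-^\beta|$ on the left, whence \eqref{eq:sym1-ach-2}. Case (b) is symmetric under the swap $D_+\leftrightarrow D_-$ and $c_x\mapsto -c_x$.

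For case (c) with $|c_x|<\norm{\vec x}$, the key observation is that $v_i=d_i$ on $D_+$ and $v_i=-d_i$ on $D_-$ unambiguously (the ambiguity lives only on $D_0$, which is why the null-domain formulas are not asserted here). Summing the third equation over a positive nodal domain $D_+^\alpha$, intra-edge $z_{ij}$ cancel, and every boundary neighbor $j\in D_-\cup D_0$ satisfies $x_i-x_j>0$, so $z_{ij}=+1$, producing $|\partial D_+^\alpha|=2\mu\vol(V)\sum_{i\in D_+^\alpha}p_i-\mu\vol(D_+^\alpha)$, i.e., \eqref{eq:sym2-ach-2}; the analogous sum over $D_-^\beta$ (with $v_i=-d_i$) gives \eqref{eq:sym1-ach-2}. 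The only real difficulty is the bookkeeping---picking the correct branch of $v_i$ in each of the three cases, and verifying that every cross-domain edge sign $z_{ij}$ is pinned by an extremal value of $\vec x$ at one endpoint; once that is in place, each identity is a one-line consequence of antisymmetry and property (S1).
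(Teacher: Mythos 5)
Your proof is correct, and for parts (a)--(c) it is essentially the paper's argument: fix the branch of $v_i\in d_i\sgn(x_i-c_x)$ dictated by the position of $c_x$, sum the second (resp.\ third) relation of \eqref{eigen-anticheeger} over each null (resp.\ $\pm$) domain, let the intra-domain $z_{ij}$ cancel by antisymmetry, and observe that every cross-domain $z_{ij}$ is pinned to $\pm 1$ because one endpoint attains $\pm\norm{\vec x}$. The one place where you genuinely diverge is \eqref{eq:inf-equal-p}: the paper obtains $\sum_{i\in D_-}p_i=\tfrac12$ by aggregating the per-domain identity \eqref{eq:sym1-ach-2} over all $\beta$ and matching it against the formula $\mu=\abs{\partial D_-}/(\vol(V)-\vol(D_-))$ inherited from Theorem \ref{theorem:ach2}, whereas you sum the second and third relations over all of $V$ (using $z_{ij}=-z_{ji}$ and $\sum_i v_i=0$) to get $\sum_{D_+}p_i=\sum_{D_-}p_i$, and then invoke the fourth relation to get $\sum_{D_+\cup D_-}p_i=1$. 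Your route is more self-contained, since it does not lean on the binary-eigenvector formulas of Theorem \ref{theorem:ach2}; both arguments tacitly assume $\mu\neq 0$, which is also implicit in the paper.
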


  \begin{proof}
 	\begin{enumerate}[(1)]
 		\item First let's prove (a). In the case of $c_x=\norm{\vec x}$, for any $i\in D_0,\,D_-$, we have $v_i=-d_i$. Then by the 2nd equality in \eqref{eigen-anticheeger}, for ant $\omega\in \{1,2,\ldots,r^0\}$, we have:
 		\begin{equation}
 			\begin{aligned}
 				\sum\limits_{i\in D_0^\omega}	\sum\limits_{j:\{i,j\}\in E} z_{ij} -\mu\sum\limits_{i\in D_0^\omega} d_i&= 0\\
 				\Leftrightarrow -\abs{E(D_0^\omega,D_+)}+\abs{E(D_0^\omega,D_-)}&=\mu\vol(D_0^\omega),
 			\end{aligned}
 		\end{equation}
 		which derives \eqref{eq:sym1-ach-1}. Here we used the property that each \textcolor{black}{null domain} is a connected component of  $D_0$. Considering the 3rd equality in \eqref{eigen-anticheeger}, for any $\beta\in \{1,2,\ldots,r^-\}$, the following equalitities hold:
 		\begin{equation}
 			\label{eq:sym-ach-proof-1-2}
 			\begin{aligned}
 				\sum\limits_{i\in D_-^\beta}	\sum\limits_{j:\{i,j\}\in E} z_{ij} -\mu\sum\limits_{i\in D_-^\beta} d_i&= -2\mu\vol(V)\sum\limits_{i\in D_-^\beta} p_i,\quad p_i\in[0,1],\\
 				\Leftrightarrow -\abs{E(D_-^\beta,D_+\cup D_0)}&=\mu\vol(D_-^\beta)-2\mu\vol(V)\sum\limits_{i\in D_-^\beta} p_i,
 			\end{aligned}
 		\end{equation}
 		and then we deduce \eqref{eq:sym1-ach-2}. Again, here we used the fact that  each negative {\color{black}extremal} domain is a connected component of  $D_-$.
 		\item Now we prove (b). In the case $c_x=-\norm{\vec x}$, for any $i\in D_0,\,D_+$, we have $v_i=d_i$, and then by the second equality in  \eqref{eigen-anticheeger}, for any $\omega\in \{1,2,\ldots,r^0\}$, we obtain
 		\begin{equation}
 			\begin{aligned}
 				\sum\limits_{i\in D_0^\omega}	\sum\limits_{j:\{i,j\}\in E} z_{ij} +\mu\sum\limits_{i\in D_0^\omega} d_i&= 0\\
 				\Leftrightarrow -\abs{E(D_0^\omega,D_+)}+\abs{E(D_0^\omega,D_-)}&=-\mu\vol(D_0^\omega).
 			\end{aligned}
 		\end{equation}
 		This yields \eqref{eq:sym2-ach-1}, where we have used the property of \textcolor{black}{null domain} $D_0$. Considering the third equality in \eqref{eigen-anticheeger}, for any $\alpha\in \{1,2,\ldots,r^+\}$, we have
 		\begin{equation}
 			\label{eq:sym-ach-proof-2-2}
 			\begin{aligned}
 				\sum\limits_{i\in D_+^\alpha}	\sum\limits_{j:\{i,j\}\in E} z_{ij} +\mu\sum\limits_{i\in D_+^\alpha} d_i&= 2\mu\vol(V)\sum\limits_{i\in D_+^\alpha} p_i,\quad p_i\in[0,1],\\
 				\Leftrightarrow \abs{E(D_+^\alpha,D_-\cup D_0)}&=-\mu\vol(D_+^\alpha)+2\mu\vol(V)\sum\limits_{i\in D_+^\alpha} p_i,
 			\end{aligned}
 		\end{equation}
 		which implies \eqref{eq:sym2-ach-2}.
 		\item We shall prove (c). In the case of $|c_x|<\norm{\vec x}$, for any $i\in D_+$, we have $v_i=d_i$; and for any $i\in D_-$, we have $v_i=-d_i$. Thus, by the third equality in \eqref{eigen-anticheeger}, we have \eqref{eq:sym-ach-proof-2-2} and \eqref{eq:sym-ach-proof-1-2}, which simply yield \eqref{eq:sym2-ach-2} and \eqref{eq:sym1-ach-2}.
 		\item Finally, we shall verify \eqref{eq:inf-equal-p}. Similar to the previous discussion, we complete the proof case by case.
 		
 		 \textcolor{black}{
 		 1. In the case of $c_x=\norm{\vec x}$, $\vol(D_+)\geq \frac{1}{2}\vol(V)$ and
 		 \begin{equation}
 		 	\label{ach:mu-binary-1}
 		 	\mu=\frac{\left|\partial D_{+}\right|}{\operatorname{vol}\left(D_{+}\right)}=\frac{\left|\partial D_{-}\right|}{\operatorname{vol}(V)-\operatorname{vol}\left(D_{-}\right)}
 		 \end{equation}
 		hold. By \eqref{eq:sym1-ach-2}, we get
 		 \begin{equation}
 		 	\label{eq:ach-proof-inf-beta}
 		 	\mu=\frac{\sum\limits_{\beta=1}^{r^-}\abs{\partial D_-^\beta}}{\sum\limits_{\beta=1}^{r^-}(2\vol(V)\sum\limits_{i\in D_-^\beta}p_i-\vol(D_-^\beta))}=\frac{\abs{\partial D_-}}{2\vol(V)\sum\limits_{i\in D_-}p_i-\vol(D_-)}.
 		 \end{equation}
 		 Combining with \eqref{ach:mu-binary-1}, we have  $\sum\limits_{i\in D_-}p_i=\frac{1}{2}$. The equality \eqref{eq:inf-equal-p} then follows from $\sum\limits_{i\in D_\pm}p_i=1$.}
 		 
 		 \textcolor{black}{
 		 2. In the case of $c_x=-\norm{\vec x}$, $\vol(D_-)\geq \frac{1}{2}\vol(V)$ and
 		 \begin{equation}
 		 	\label{ach:mu-binary-2}
 		 	\mu=\frac{\left|\partial D_{-}\right|}{\operatorname{vol}\left(D_{-}\right)}=\frac{\left|\partial D_{+}\right|}{\operatorname{vol}(V)-\operatorname{vol}\left(D_{+}\right)}
 		 \end{equation}
 		hold. By \eqref{eq:sym2-ach-2}, we have
 		\begin{equation}
 			\label{eq:ach-proof-inf-alpha}
 			\mu=\frac{\sum\limits_{\alpha=1}^{r^+}\abs{\partial D_+^\alpha}}{\sum\limits_{\alpha=1}^{r^+}(2\vol(V)\sum\limits_{i\in D_+^\alpha}p_i-\vol(D_+^\alpha))}=\frac{\abs{\partial D_+}}{2\vol(V)\sum\limits_{i\in D_+}p_i-\vol(D_+)}.
 		\end{equation}
 		 Combining with \eqref{ach:mu-binary-2}, we obtain  $\sum\limits_{i\in D_+}p_i=\frac{1}{2}$. Again, by $\sum\limits_{i\in D_\pm}p_i=1$, we have \eqref{eq:inf-equal-p}.}
 		 
 		 \textcolor{black}{
 		 3. In the case of $|c_x|<\norm{\vec x}$, we have $\vol(D_{\pm})=\frac{1}{2}\vol(V)$ and
 		 \begin{equation}
 		 	\label{ach:mu-binary-3=}
 		 	\mu=\frac{\left|\partial D_{-}\right|}{\operatorname{vol}(V)-\operatorname{vol}\left(D_{-}\right)}=\frac{\left|\partial D_{+}\right|}{\operatorname{vol}(V)-\operatorname{vol}\left(D_{+}\right)},
 		 \end{equation}
 		 thus both \eqref{eq:ach-proof-inf-alpha} and \eqref{eq:ach-proof-inf-beta} hold. Together with \eqref{ach:mu-binary-3=}, it is easy to get \eqref{eq:inf-equal-p}.}
 	\end{enumerate}
 \end{proof}

 \begin{proposition}
 	Suppose $(\mu,\vec x)$ is an eigenpair, and $c_x\in\median(\vec x)$. \textcolor{black}{For the null domains}  $D_0^1,~\cdots,D_0^{r^0}$, if $|c_x|=\norm{\vec x}$, $(\mu,\vec 1_{S_+^\iota,(S_+^\iota)^c})$ and  $(\mu,\vec 1_{S_-^\iota,(S_-^\iota)^c})$ are also eigenpair, where $\iota\subseteq\{1,2,\ldots,r^0\}$,  $S_+^\iota:=\bigcup\limits_{\omega\in\iota} D_0^\omega\bigcup D_+$ and $S_-^\iota:=\bigcup\limits_{\omega\in\iota} D_0^\omega\bigcup D_-$.
 \end{proposition}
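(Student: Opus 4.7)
The plan is to invoke Theorem \ref{theorem:ach2}\ref{th:c-ach}, which tells us that for any $A\subseteq V$, the pair $(|\partial A|/\max\{\vol(A),\vol(A^c)\},\vec 1_{A,A^c})$ is automatically an eigenpair. So the whole task reduces to showing
\[
\frac{|\partial S_+^\iota|}{\max\{\vol(S_+^\iota),\vol((S_+^\iota)^c)\}}=\mu\quad\text{and}\quad\frac{|\partial S_-^\iota|}{\max\{\vol(S_-^\iota),\vol((S_-^\iota)^c)\}}=\mu.
\]
I will treat the case $c_x=\|\vec x\|_\infty$ (the case $c_x=-\|\vec x\|_\infty$ is identical after swapping $D_+\leftrightarrow D_-$ and using part (b) of Proposition \ref{pro:sym-ach} instead of part (a)). By \eqref{ach:mu-binary-1} we have $\vol(D_+)\ge\vol(V)/2$ and $\mu=|\partial D_+|/\vol(D_+)=|\partial D_-|/(\vol(V)-\vol(D_-))$.

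First I would exploit the nodal-domain structure: since each $D_0^\gamma$ is a connected component of the subgraph induced by $D_0$, there are no edges between distinct $D_0^\gamma$, and likewise the only edges leaving $D_0^\gamma$ go to $D_+\cup D_-$. A direct edge-count then yields
\[
|\partial S_+^\iota|=|\partial D_+|+\sum_{\gamma\in\iota}\bigl(|E(D_0^\gamma,D_-)|-|E(D_0^\gamma,D_+)|\bigr),
\]
\[
|\partial S_-^\iota|=|\partial D_-|-\sum_{\gamma\in\iota}\bigl(|E(D_0^\gamma,D_-)|-|E(D_0^\gamma,D_+)|\bigr).
\]
Now Proposition \ref{pro:sym-ach}(a), identity \eqref{eq:sym1-ach-1}, gives $|E(D_0^\gamma,D_-)|-|E(D_0^\gamma,D_+)|=\mu\vol(D_0^\gamma)$ for every $\gamma$. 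Substituting and using $|\partial D_+|=\mu\vol(D_+)$ and $|\partial D_-|=\mu(\vol(V)-\vol(D_-))=\mu(\vol(D_+)+\vol(D_0))$ I obtain
\[
|\partial S_+^\iota|=\mu\Bigl(\vol(D_+)+\sum_{\gamma\in\iota}\vol(D_0^\gamma)\Bigr)=\mu\,\vol(S_+^\iota),
\]
\[
|\partial S_-^\iota|=\mu\Bigl(\vol(D_+)+\vol(D_0)-\sum_{\gamma\in\iota}\vol(D_0^\gamma)\Bigr)=\mu\,\vol((S_-^\iota)^c).
\]

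Finally, I need to check that $\vol(S_+^\iota)$ and $\vol((S_-^\iota)^c)$ are actually the larger halves. Since $\vol(S_+^\iota)=\vol(D_+)+\sum_{\gamma\in\iota}\vol(D_0^\gamma)\ge\vol(D_+)\ge\vol(V)/2$, we get $\max\{\vol(S_+^\iota),\vol((S_+^\iota)^c)\}=\vol(S_+^\iota)$; and $\vol((S_-^\iota)^c)=\vol(D_+)+\sum_{\gamma\notin\iota}\vol(D_0^\gamma)\ge\vol(V)/2$ similarly. Plugging these into Theorem \ref{theorem:ach2}\ref{th:c-ach} finishes the proof for $c_x=\|\vec x\|_\infty$, and the mirror argument with \eqref{eq:sym2-ach-1} and \eqref{ach:mu-binary-2} handles $c_x=-\|\vec x\|_\infty$.

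I do not anticipate a serious obstacle: the statement is essentially bookkeeping once one has the zero-nodal-domain identity from Proposition \ref{pro:sym-ach}. The only mild subtlety is making sure that after attaching (or detaching) the selected zero nodal domains one still lands on the \emph{large} side of the bipartition, so that the $\max$ in the denominator of Theorem \ref{theorem:ach2}\ref{th:c-ach} is the correct volume; this is handled by the inequality $\vol(D_+)\ge\vol(V)/2$ that comes for free from $c_x=\|\vec x\|_\infty$.
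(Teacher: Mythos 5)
Your proposal is correct and follows essentially the same route as the paper: both arguments reduce the claim to Theorem \ref{theorem:ach2}\ref{th:c-ach}, use the zero-nodal-domain identity \eqref{eq:sym1-ach-1} (resp.\ \eqref{eq:sym2-ach-1}) together with \eqref{ach:mu-binary-1} (resp.\ \eqref{ach:mu-binary-2}) to show $|\partial S_+^\iota|=\mu\vol(S_+^\iota)$ and $|\partial S_-^\iota|=\mu\vol((S_-^\iota)^c)$, and finish by checking that these volumes are at least $\vol(V)/2$. The only cosmetic difference is that the paper combines equal ratios as a mediant while you substitute the identity termwise; the computation is the same.
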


\begin{proof}
 	We divide the proof into two cases.
 	\begin{enumerate}[(1)]
 		\item In the case of $c_x=\norm{\vec x}$, by \eqref{eq:sym1-ach-1}, we have:
 		\begin{equation}
 			\mu=\frac{\sum\limits_{\omega\in\iota}(-\abs{E(D_0^\omega,D_+)}+\abs{E(D_0^\omega,D_-)})}{\sum\limits_{\omega\in\iota}\vol(D_0^\omega)}
 		\end{equation}
 		Then it follows from  \eqref{ach:mu-binary-1} that
 		\begin{equation*}
 			\begin{aligned}
 					\mu&=\frac{|E(D_+,D_-)|+(|E(D_+,D_0)|-\sum\limits_{\omega\in\iota}\abs{E(D_0^\omega,D_+)})+\sum\limits_{\omega\in\iota}\abs{E(D_0^\omega,D_-)}}{\vol(D_+)+\sum\limits_{\omega\in\iota}\vol(D_0^\omega)}=\frac{\abs{\partial S_+^\iota}}{\vol(S_+^\iota)},\\
 						\mu&=\frac{|E(D_-,D_+)|+(|E(D_-,D_0)|-\sum\limits_{\omega\in\iota}\abs{E(D_0^\omega,D_-)})+\sum\limits_{\omega\in\iota}\abs{E(D_0^\omega,D_+)}}{\vol(V)-\vol(D_-)-\sum\limits_{\omega\in\iota}\vol(D_0^\omega)}=\frac{\abs{\partial S_-^\iota}}{\vol((S_-^\iota)^c)},
 			\end{aligned}
 		\end{equation*}
 		By  $\vol(S_+^\iota)\geq\frac{1}{2}\vol(V)$ and $\vol((S_-^\iota)^c)\geq \frac{1}{2}\vol(V)$,  $(\mu,\vec 1_{S_+^\iota,(S_+^\iota)^c})$ and $(\mu,\vec 1_{S_-^\iota,(S_-^\iota)^c})$ are also eigenpairs.
 		
 			\item In the case of $c_x=-\norm{\vec x}$, by \eqref{eq:sym2-ach-1}, the following equality holds:
 		\begin{equation}
 			\mu=\frac{\sum\limits_{\omega\in\iota}(\abs{E(D_0^\omega,D_+)}-\abs{E(D_0^\omega,D_-)})}{\sum\limits_{\omega\in\iota}\vol(D_0^\omega)}
 		\end{equation}
 		Again by \eqref{ach:mu-binary-2}, we get
 		\begin{equation*}
 			\begin{aligned}
 				\mu&=\frac{|E(D_+,D_-)|+(|E(D_+,D_0)|-\sum\limits_{\omega\in\iota}\abs{E(D_0^\omega,D_+)})+\sum\limits_{\omega\in\iota}\abs{E(D_0^\omega,D_-)}}{\vol(V)-\vol(D_+)-\sum\limits_{\omega\in\iota}\vol(D_0^\omega)}=\frac{\abs{\partial S_+^\iota}}{\vol((S_+^\iota)^c)},\\
 				\mu&=\frac{|E(D_-,D_+)|+(|E(D_-,D_0)|-\sum\limits_{\omega\in\iota}\abs{E(D_0^\omega,D_-)})+\sum\limits_{\omega\in\iota}\abs{E(D_0^\omega,D_+)}}{\vol(D_-)+\sum\limits_{\omega\in\iota}\vol(D_0^\omega)}=\frac{\abs{\partial S_-^\iota}}{\vol(S_-^\iota)},
 			\end{aligned}
 		\end{equation*}
 		By  $\vol((S_+^\iota)^c)\geq\frac{1}{2}\vol(V)$ and $\vol(S_-^\iota)\geq \frac{1}{2}\vol(V)$,  $(\mu,\vec 1_{S_+^\iota,(S_+^\iota)^c})$ and $(\mu,\vec 1_{S_-^\iota,(S_-^\iota)^c})$ are also eigenpairs.
 	\end{enumerate}
 \end{proof}

 \begin{theorem}
 	Given a connected graph $G=(V,E)$, suppose that $\mu$ is the smallest nonzero eigenvalue, and $\vec x$  is the corresponding eigenvector, and  $c_x\in\median(\vec x)$. Then we have the following estimates for nodal count:
 	\begin{enumerate}[(a)]
 		\item If $c_x=\norm{\vec x}$, then $r^0\leq 2$ and $r^-\leq 1$.
 		\item If $c_x=-\norm{\vec x}$, then $r^0\leq 2$ and $r^+\leq 1$.
 		\item If $|c_x|<\norm{\vec x}$, then $r^+\leq 1$ and $r^-\leq 1$.
 	\end{enumerate}
 \end{theorem}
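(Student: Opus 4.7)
The plan is to argue by contradiction in each case, producing a positive eigenvalue strictly below $\mu$ by applying the single-set binary-cut construction in Theorem~\ref{theorem:ach2}\ref{th:c-ach} to a carefully chosen nodal domain; this will contradict the minimality of $\mu$ among nonzero eigenvalues. The two essential inputs are the per-nodal-domain identities of Proposition~\ref{pro:sym-ach} and the normalization \eqref{eq:inf-equal-p}. I would handle case (a) in detail; cases (b) and (c) then follow by small modifications.

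For the bound $r^-\le 1$ in case (a), I would assume $r^-\ge 2$ and set $p^\beta:=\sum_{i\in D_-^\beta}p_i$. Then \eqref{eq:inf-equal-p} gives $\sum_\beta p^\beta=1/2$, so some $\beta^*$ satisfies $p^{\beta^*}<1/2$. Since $c_x=\norm{\vec x}$ forces $\vol(D_-^{\beta^*})\le\vol(D_-)\le \vol(V)/2$ (via \eqref{ach:mu-binary-1}), Theorem~\ref{theorem:ach2}\ref{th:c-ach} applied with $A=D_-^{\beta^*}$ produces an eigenpair whose eigenvalue equals $|\partial D_-^{\beta^*}|/(\vol(V)-\vol(D_-^{\beta^*}))$. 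Substituting \eqref{eq:sym1-ach-2} for the numerator rewrites this value as $\mu\cdot(2\vol(V)\,p^{\beta^*}-\vol(D_-^{\beta^*}))/(\vol(V)-\vol(D_-^{\beta^*}))$, which is strictly smaller than $\mu$ precisely because $p^{\beta^*}<1/2$. Connectedness of $G$ ensures $|\partial D_-^{\beta^*}|\ge 1$, so this is a nonzero eigenvalue strictly below $\mu$, a contradiction.

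For the bound $r^0\le 2$ in case (a), I would assume $r^0\ge 3$, set $A^+:=|E(D_0,D_+)|$ and $m:=|E(D_+,D_-)|$, and combine \eqref{eq:sym1-ach-1} summed over $\gamma$ with the identity $A^+=\mu\vol(D_+)-m$ coming from \eqref{ach:mu-binary-1} together with $\vol(D_+)+\vol(D_-)+\vol(D_0)=\vol(V)$ to get
\[\sum_{\gamma=1}^{r^0}\bigl(2a_\gamma^+-\mu(\vol(V)-2v_\gamma)\bigr)=\mu\bigl(2\vol(V)-2\vol(D_-)-r^0\vol(V)\bigr)-2m,\]
which under $r^0\ge 3$ is at most $-\mu(\vol(V)+2\vol(D_-))-2m<0$. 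Hence some $\gamma^*$ satisfies $2a_{\gamma^*}^+<\mu(\vol(V)-2v_{\gamma^*})$, which in turn forces $v_{\gamma^*}<\vol(V)/2$. Applying Theorem~\ref{theorem:ach2}\ref{th:c-ach} with $A=D_0^{\gamma^*}$ and using $a_{\gamma^*}^-=a_{\gamma^*}^++\mu v_{\gamma^*}$ from \eqref{eq:sym1-ach-1} then yields the positive eigenvalue $(2a_{\gamma^*}^++\mu v_{\gamma^*})/(\vol(V)-v_{\gamma^*})<\mu$ (nonzero by connectedness), again contradicting minimality.

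Cases (b) and (c) are handled by the same template. Case (b) is symmetric under $D_+\leftrightarrow D_-$: replace \eqref{eq:sym1-ach-1} and \eqref{eq:sym1-ach-2} by \eqref{eq:sym2-ach-1} and \eqref{eq:sym2-ach-2}, and \eqref{ach:mu-binary-1} by \eqref{ach:mu-binary-2}. For case (c), $\vol(D_\pm)=\vol(V)/2$, so the light-nodal-domain argument used above for $r^-\le 1$ applies separately on each side: via \eqref{eq:sym2-ach-2} with $\sum_\alpha p^\alpha=1/2$ to force $r^+\le 1$, and via \eqref{eq:sym1-ach-2} with $\sum_\beta q^\beta=1/2$ to force $r^-\le 1$. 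The main technical obstacle throughout is the summation bookkeeping in the $r^0$-argument; once $\mu$, $m$, the $\vol$-conservation identity, and the per-domain formulas from Proposition~\ref{pro:sym-ach} are combined correctly, the binary-cut construction closes the contradiction uniformly.
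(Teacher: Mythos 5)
Your proposal is correct, and its skeleton coincides with the paper's: in every case one assumes the nodal count is too large, applies the binary-cut construction of Theorem~\ref{theorem:ach2}~\ref{th:c-ach} to a single nodal (or null) domain, and contradicts the minimality of $\mu$ with a strictly smaller positive eigenvalue. Where you diverge is in how the offending domain is located and how its cut value is compared to $\mu$. The paper never invokes Proposition~\ref{pro:sym-ach} here: it simply applies the mediant inequality $\min_\gamma \frac{a_\gamma}{b_\gamma}\le \frac{\sum_\gamma a_\gamma}{\sum_\gamma b_\gamma}$ to the per-domain cut values $\abs{\partial D^\gamma}/(\vol(V)-\vol(D^\gamma))$, obtaining e.g.\ $\min_\beta\mu_-^\beta\le \abs{\partial D_-}/(r^-\vol(V)-\vol(D_-))$, and then compares denominators against $\mu=\abs{\partial D_-}/(\vol(V)-\vol(D_-))$ from \eqref{ach:mu-binary-1}. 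You instead run the averaging on the quantities supplied by Proposition~\ref{pro:sym-ach} --- the masses $p^\beta=\sum_{i\in D_-^\beta}p_i$ normalized by \eqref{eq:inf-equal-p}, and, for the null domains, the signed quantity $2a_\gamma^+-\mu(\vol(V)-2v_\gamma)$ built from \eqref{eq:sym1-ach-1} --- and then translate the pigeonholed domain into a cut value via \eqref{eq:sym1-ach-2}. Both closings are airtight (your strictness checks $p^{\beta^*}<1/2$ and $2a_{\gamma^*}^+<\mu(\vol(V)-2v_{\gamma^*})$, and the positivity via connectedness, are exactly what is needed); the paper's version is shorter and needs less machinery, while yours exhibits more explicitly which structural data of the eigenvector (the subgradient weights $p_i$) force the bound, at the cost of heavier bookkeeping in the $r^0$ estimate.
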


\begin{proof}
 	\begin{enumerate}[(1)]
 		\item First we shall prove (a). In the case of  $c_x=\norm{\vec x}$, we have  $\vol(D_+)\geq \frac{1}{2}\vol(V)$ and \eqref{ach:mu-binary-1}, and thus
 		$$\mu=\frac{\abs{\partial D_+}+\abs{\partial D_-}}{\vol(D_+)+\vol(V)-\vol(D_-)}=\frac{\abs{\partial D_0}+2\abs{E(D_+,D_-)}}{\vol(D_0)+2\vol(D_+)}.$$
 		By (c) in Theorem \ref{theorem:ach2}, for eigenpair $(\mu_{0}^\omega,\vec 1_{D_0^\omega,(D_0^\omega)^c})$, $\forall\,\omega\in\{1,2,\ldots,r^0\}$, we derive 
 		\begin{equation}
 			\label{eq:ach-sum-gamma}
 			0<\min_\omega \mu_{0}^\omega\leq \frac{\sum\limits_{\omega=1}^{r^0}\abs{\partial D_0^\omega}}{\sum\limits_{\omega=1}^{r^0}(\vol(V)-\vol(D_0^\omega))}=\frac{\abs{\partial D_0}}{r^0\vol(V)-\vol(D_0)}.
 		\end{equation}
 		Suppose the contrary, that $r^0\geq 3$. Then by 
 		$$r^0\vol(V)-\vol(D_0)\geq 3\vol(V)-\vol(D_0)>\vol(D_0)+2\vol(D_+),$$
 		we have $\min_\omega \mu_{0}^\omega<\mu$, a contradiction. Now we check $r^-\leq 1$. 
 		
 		Considering the eigenpair  $(\mu_{-}^\beta,\vec 1_{D_-^\beta,(D_-^\beta)^c})$, $\forall\,\beta\in\{1,2,\ldots,r^-\}$, we derive 
 		\begin{equation}
 			\label{eq:ach-sum-beta}
 			0<\min_\beta \mu_{-}^\beta\leq \frac{\sum\limits_{\beta=1}^{r^-}\abs{\partial D_-^\beta}}{\sum\limits_{\beta=1}^{r^-}(\vol(V)-\vol(D_-^\beta))}=\frac{\abs{\partial D_-}}{r^-\vol(V)-\vol(D_-)}.
 		\end{equation}
 		If $r^-\geq 2$, then 
 		$$\min_\beta \mu_{-}^\beta\leq \frac{\abs{\partial D_-}}{2\vol(V)-\vol(D_-)} <\mu=\frac{\abs{\partial D_-}}{\vol(V)-\vol(D_-)},$$
 		a contradiction.
 		\item The proof of (b). If $c_x=-\norm{\vec x}$, then $\vol(D_-)\geq \frac{1}{2}\vol(V)$ and \eqref{ach:mu-binary-2} hold. Accordingly, 
 		$$\mu=\frac{\abs{\partial D_-}+\abs{\partial D_+}}{\vol(D_-)+\vol(V)-\vol(D_+)}=\frac{\abs{\partial D_0}+2\abs{E(D_+,D_-)}}{\vol(D_0)+2\vol(D_-)}.$$
 		Consider the eigenpairs $(\mu_{0}^\omega,\vec 1_{D_0^\omega,(D_0^\omega)^c})$, $\forall\,\omega\in\{1,2,\ldots,r^0\}$, which satisfy \eqref{eq:ach-sum-gamma}. We can similarly obtain  $r^0\leq 2$.
        Now we show $r^+\leq 1$. 
 		From the eigenpairs  $(\mu_{+}^\alpha,\vec 1_{D_+^\alpha,(D_+^\alpha)^c})$, $\forall\,\alpha\in\{1,2,\ldots,r^+\}$, we derive
 		\begin{equation}
 			\label{eq:ach-sum-alpha}
 			0<\min_\alpha \mu_{+}^\alpha\leq \frac{\sum\limits_{\alpha=1}^{r^+}\abs{\partial D_+^\alpha}}{\sum\limits_{\alpha=1}^{r^+}(\vol(V)-\vol(D_+^\alpha))}=\frac{\abs{\partial D_+}}{r^+\vol(V)-\vol(D_+)}.
 		\end{equation}
 		Suppose the contrary that  $r^+\geq 2$. Then 
 		$$\min_\alpha \mu_{+}^\alpha\leq \frac{\abs{\partial D_+}}{2\vol(V)-\vol(D_+)} <\mu=\frac{\abs{\partial D_+}}{\vol(V)-\vol(D_+)},$$
 		which leads to a contradiction.
 		\item The proof of (c): Since $|c_x|<\norm{\vec x}$, we have $\max\{\vol(D_+),\vol(D_-)\}\leq \frac{1}{2}\vol(V)$ and \eqref{ach:mu-binary-3=}, which further imply
         \eqref{eq:ach-sum-alpha} and \eqref{eq:ach-sum-beta}, so that we can similarly verify  $r^+\leq 1$ and $r^-\leq 1$. 
 	\end{enumerate}
 \end{proof}

  \begin{theorem}
  	\label{thm:ach-single}
 	Given a graph $G=(V,E)$ without isolated vertices, suppose $\mu$ is the maximum eigenvalue, and $\vec x$  is a corresponding eigenvector. Let $G_0=G[D_0]$ be the subgraph of $G$ induced by $D_0$. Then $G_0$ has no edge.
 \end{theorem}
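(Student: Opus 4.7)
The plan is to argue by contradiction. Suppose $G_0 = G[D_0]$ contains an edge $\{i,j\}$; then $d_i^0 := |E(\{i\}, D_0 \setminus \{i\})| \geq 1$. By Theorem~\ref{theorem:ach2}\ref{tpD-ach}, the ternary vector $\vec y := \vec 1_{D_+, D_-}$ is also an eigenvector for $\mu$ and has $D_0(\vec y) = D_0$, so I may work with $\vec y$ in place of $\vec x$. Moreover, the continuous reformulation \eqref{eq:antiCheeger-I(x)} identifies $\mu = h_{\anti}(G)$ with the global maximum of the Rayleigh quotient $R(\vec z) := I(\vec z)/\bigl(2\vol(V)\|\vec z\|_\infty - N(\vec z)\bigr)$ over nonzero $\vec z$, and this maximum is attained at $\vec y$.

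Next I would perturb $\vec y$ along the coordinate $i$: put $\vec y(s) := \vec y + s \vec e_i$ for $|s| < 1$. Since $\|\vec y(s)\|_\infty = 1$ and the denominator $2\vol(V) - N(\vec y(s))$ stays positive near $s = 0$, the scalar function $s \mapsto R(\vec y(s))$ is locally Lipschitz and admits one-sided derivatives at $0$. Maximality of $R$ at $s = 0$ forces $R'_+(0) \leq 0 \leq R'_-(0)$, and since the quotient rule gives $R'_\pm(0) = \bigl(I'_\pm(0) + \mu N'_\pm(0)\bigr)/\bigl(2\vol(V) - N(\vec y)\bigr)$ with a positive denominator, subtracting yields
\[
(I'_+(0) - I'_-(0)) + \mu\bigl(N'_+(0) - N'_-(0)\bigr) \leq 0.
\]

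The decisive step is to extract positive information from both summands. Computing the one-sided derivatives of $I(\vec y(s)) = \sum_{\{k,\ell\}\in E} |y_k(s) - y_\ell(s)|$, the only edges whose contributions are non-smooth at $s = 0$ are those $\{i,k\}$ with $k \in D_0 \setminus \{i\}$; each such edge contributes $+1$ to $I'_+(0)$ and $-1$ to $I'_-(0)$, giving $I'_+(0) - I'_-(0) = 2d_i^0 \geq 2$, where the lower bound uses the hypothesized edge $\{i,j\}$. On the other hand, $N(\vec z) = \min_{t\in\R} \sum_k d_k |z_k - t|$ is convex in $\vec z$, being the infimum over $t$ of the jointly convex function $(\vec z, t) \mapsto \sum_k d_k |z_k - t|$; hence $s \mapsto N(\vec y(s))$ is convex and $N'_+(0) \geq N'_-(0)$. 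Since $G$ has no isolated vertex and $|V|\ge 2$, the graph has at least one edge, so $\mu = h_{\anti}(G) > 0$, and the left-hand side above is at least $2 > 0$, the desired contradiction.

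The expected obstacle is the non-smoothness of $N$: the median of $\vec y$ may sit at $-1$, $0$, or $1$ depending on the relative volumes of $D_+$, $D_-$, and $D_0$, so a direct case analysis of $N'_\pm(0)$ would split into several regimes with different closed-form derivatives. The point of the argument is that this case split can be entirely avoided by exploiting convexity: $N'_+(0) \geq N'_-(0)$ holds uniformly, and combined with the sharp jump $I'_+(0) - I'_-(0) \geq 2$ driven by the edge $\{i,j\} \subseteq D_0$, one immediate inequality suffices.
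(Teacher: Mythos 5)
Your argument is correct, and it takes a genuinely different route from the paper's. The paper proves this by working directly with the coordinate form \eqref{eigen-anticheeger}: it splits into three cases according to the position of the median $c_x$, and in each case compares $\mu$ with the eigenvalue of the cut obtained by moving a single vertex $i\in D_0$ into $D_+$ or $D_-$; maximality of $\mu$ then forces $z_{ij}=1$ (or $z_{ij}=-1$) for every $j\in D_0$ adjacent to $i$, which contradicts the antisymmetry $z_{ij}=-z_{ji}$ once both endpoints of an edge lie in $D_0$. The third case ($|c_x|<\norm{\vec x}$) is by far the most involved, requiring the ``positive good/negative good'' node dichotomy and careful bookkeeping of the subgradient components $v_i$ of $N$. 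You instead use only the variational characterization: $\mu=h_{\mathrm{anti}}(G)$ is the global maximum of $R(\vec z)=I(\vec z)/(2\vol(V)\|\vec z\|_\infty-N(\vec z))$ and is attained at the ternary eigenvector $\vec 1_{D_+,D_-}$ (both facts follow from Theorem \ref{theorem:ach2}\ref{th:c-ach},\ref{tpD-ach} and \eqref{eq:antiCheeger-I(x)}), and then the first-order optimality conditions $R'_+(0)\le 0\le R'_-(0)$ along the coordinate direction $\vec e_i$ give $(I'_+(0)-I'_-(0))+\mu(N'_+(0)-N'_-(0))\le 0$; the kink of $|s|$ at the hypothesized $D_0$-edge forces $I'_+(0)-I'_-(0)\ge 2$, while convexity of $N$ (as a partial minimization of a jointly convex function) gives $N'_+(0)\ge N'_-(0)$, so with $\mu\ge 0$ the left side is at least $2$, a contradiction. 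This is shorter and, as you observe, the convexity of $N$ elegantly absorbs the entire median case split. What the paper's longer computation buys is the explicit values of $z_{ij}$ and $v_i$ at the maximizer (equations \eqref{eq:ach-mumax-3-3} and \eqref{eq:ach-mumax-4-3}), which are reused immediately afterwards to prove Corollary \ref{cor:ach-mumax-1}; your argument does not produce those by-products, but as a proof of the stated theorem it is complete and, if anything, does not even use $\mu>0$, only $\mu\ge 0$.
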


 \begin{proof}
 Let $c_x\in\median(\vec x)$. We separate the proof into three cases.  Before going to the details, we use  
 	\begin{align}
 		d_i^0=|E(\{i\},D_0)|,\label{vetex:i-zero}
 	\end{align}
 	to denote the zero edges incident to $i$, \textcolor{black}{and use the notations $d_i^+=|E(\{i\},D_+)|$ and $d_i^-=|E(\{i\},D_-)|$.}
 	They  satisfy $d_i=d_i^++d_i^-+d_i^0$,  for any $i\in V$.
 	\begin{enumerate}[(1)]
 		\item In the case of $c_x=\norm{\vec x}$, for any $i\in D_0$, by the second equality in \eqref{eigen-anticheeger},
 		\begin{equation}
 			\label{eq:ach-mumax-1}
 			-d_i^0\leq \sum_{j\in D_0,\{i,j\}\in E} z_{ij}=d_i^+-d_i^-+\mu d_i\leq d_i^0.
 		\end{equation}
 		Consider the eigenpair $(\mu^\prime,\vec 1_{S_i,(S_i)^c})$, where $S_i=D_+\bigcup \{i\}$. Then
 		\begin{equation*}
 			\mu^\prime=\frac{\abs{\partial S_i}}{\vol(S_i)}=\frac{\abs{\partial D_+}-d_i^++d_i^-+d_i^0}{\vol(D_+)+d_i}\leq\mu= \frac{\abs{\partial D_+}}{\vol(D_+)},
 		\end{equation*}
 		and hence we have
 		\begin{equation*}
 			\frac{-d_i^++d_i^-+d_i^0}{d_i}\leq\mu\Rightarrow -d_i^++d_i^-+d_i^0\leq\mu d_i.
 		\end{equation*}
 		By the inequality on the right hand side of \eqref{eq:ach-mumax-1}, we get:
 		\begin{equation*}
 			-d_i^++d_i^-+d_i^0=\mu d_i\Rightarrow z_{ij}=1,\,\forall\,j\in D_0,\{i,j\}\in E.
 		\end{equation*}
 		Suppose the contrary that $G_0$ has edges, i.e., there exist $i,j\in D_0,\{i,j\}\in E$ such that $z_{ij}=z_{ji}=1$, which contradicts to $z_{ij}=-z_{ji}$.
 		
 	    \item In the case of $c_x=-\norm{\vec x}$, for any $i\in D_0$, by the 2nd equality in \eqref{eigen-anticheeger}, 
 		\begin{equation}
 			\label{eq:ach-mumax-2}
 			-d_i^0\leq \sum_{j\in D_0,\{i,j\}\in E} z_{ij}=d_i^+-d_i^--\mu d_i\leq d_i^0.
 		\end{equation}
 		Considering the eigenpair $(\mu^\prime,\vec 1_{S_i,(S_i)^c})$, with $S_i=D_-\bigcup \{i\}$, then
 		\begin{equation*}
 			\mu^\prime=\frac{\abs{\partial S_i}}{\vol(S_i)}=\frac{\abs{\partial D_-}-d_i^-+d_i^++d_i^0}{\vol(D_-)+d_i}\leq\mu= \frac{\abs{\partial D_-}}{\vol(D_-)},
 		\end{equation*}
 		and thus 
 		\begin{equation*}
 			\frac{-d_i^-+d_i^++d_i^0}{d_i}\leq\mu\Rightarrow -d_i^-+d_i^++d_i^0\leq\mu d_i.
 		\end{equation*}
 		By the inequality on the left hand side of \eqref{eq:ach-mumax-2}, we have
 		\begin{equation*}
 			-d_i^-+d_i^++d_i^0=\mu d_i\Rightarrow z_{ij}=-1,\,\forall\,j\in D_0,\{i,j\}\in E.
 		\end{equation*}
This means that if $G_0$ has edges, then there exist $i,j\in D_0,\{i,j\}\in E$ such that $z_{ij}=z_{ji}=-1$, which contradicts to $z_{ij}=-z_{ji}$.
 		\item In the case of $|c_x|<\norm{\vec x}$, we first prove $i\in D_0$ and $d_i^-\geq d_i^+$ (note that we would call such node $i$ ``positive good point"), the vector obtained by moving $i$ into  $D_+$ satisfies $\median(\vec y)=\{1\}$, where $\vec y:=\vec 1_{S_i,(S_i)^c}$, $S_i:=D_+\bigcup \{i\}$. 
   Let the objective function $f:\mathbb{R}^n\backslash\{\vec 0\}\rightarrow \mathbb{R}$ (corresponding to the eigenproblem) be defined via 
 		\begin{equation}
 		   f(\vec u)=\frac{I(\vec u)}{2\vol(V)\norm{\vec u}-N(\vec u)}.
 		\end{equation}
 		Suppose $0\in\median(\vec y)$. Then by \eqref{eq:antiCheeger-I(x)}, one has:
 		\begin{equation*}
 			\begin{aligned}
 				f(\vec y)&=\frac{2\abs{E(D_+,D_-)}+\abs{E(D_0,D_+\cup D_-)}-d_i^++d_i^-+d_i^0}{\vol(V)+\vol(D_0)-d_i}\leq f(\vec 1_{D_+,(D_+)^c})\\
 				&=\mu=\frac{2\abs{E(D_+,D_-)}+\abs{E(D_0,D_+\cup D_-)}}{\vol(V)+\vol(D_0)}.
 			\end{aligned}
 		\end{equation*}
 		Since $-d_i^++d_i^-+d_i^0\geq 0$ and $d_i>0$, there holds $f(\vec y)>\mu$, a contradiction.
 		Therefore, 
 		\begin{equation}
 			\label{eq:ach-mumax-3}
 			\begin{aligned}
 				f(\vec y)&=\frac{2\abs{E(D_+,D_-)}+\abs{E(D_0,D_+\cup D_-)}-d_i^++d_i^-+d_i^0}{2\vol(D_+)+\vol(D_0)+d_i}\leq f(\vec 1_{D_+,(D_+)^c})\\
 				&=\mu=\frac{2\abs{E(D_+,D_-)}+\abs{E(D_0,D_+\cup D_-)}}{\vol(V)+\vol(D_0)}.
 			\end{aligned}
 		\end{equation}
        If  $2\vol(D_+)+\vol(D_0)+d_i<\vol(V)+\vol(D_0)$, then $f(\vec y)>\mu$, a contradiction. So, if \eqref{eq:ach-mumax-3} holds, we have:
        \begin{equation}
        	\label{eq:ach-mumax-3-1}
        	-d_i^++d_i^-+d_i^0\leq\mu(-\vol(V)+2\vol(D_+)+d_i).
        \end{equation}
 		 By $v_i\in d_i\sgn(x_i-c_x)$ and the 1st equality in \eqref{eigen-anticheeger}, it is possible to give an upper bound for $v_i$ via
 		\begin{equation*}
 			\begin{aligned}
 			v_i&=-\sum\limits_{j\in D_0\backslash\{i\}}v_j+\vol(D_-)-\vol(D_+)\leq\min\{d_i,\vol(D_0)-d_i+\vol(D_-)-\vol(D_+)\}\\
 			&=\vol(D_0)-d_i+\vol(D_-)-\vol(D_+),
 			\end{aligned}
 		\end{equation*}
 		where the last equality is due to $1\in\median(\vec y)$. 
Then by the 2nd equality in \eqref{eigen-anticheeger}, we have:
 		\begin{equation*}
 			d_i^+-d_i^-=\sum\limits_{j\in D_0,\{i,j\}\in E} z_{ij}+\mu v_i\leq d_i^0+\mu(\vol(D_0)-d_i+\vol(D_-)-\vol(D_+)).
 		\end{equation*}
 		Combining with \eqref{eq:ach-mumax-3-1}, we get 
 		\begin{equation}
 				\label{eq:ach-mumax-3-2}
 			\mu d_i=-d_i^++d_i^-+d_i^0+\mu(\vol(V)-2\vol(D_+)),\Rightarrow z_{ij}=1,\,\forall\,j\in D_0,\{i,j\}\in E,
 		\end{equation}
 		\begin{equation}
 			\label{eq:ach-mumax-3-3}
 			v_i=\vol(D_0)-d_i+\vol(D_-)-\vol(D_+),\quad v_j=d_j,\,\forall j\in D_0\backslash\{i\}.
 		\end{equation}
 		
 		Similarly, for any $i\in D_0$ with $d_i^+> d_i^-$ (we call such $i$ ``negative good node"), the vector obtained by moving $i$ into 
 $D_-$ satisfies $\median(\vec u)=\{-1\}$, where $\vec u=\vec 1_{S_i,(S_i)^c}$, $S_i=D_-\bigcup \{i\}$. In consequence, 
 		\begin{equation}
 			\label{eq:ach-mumax-4}
 			\begin{aligned}
 				f(\vec u)&=\frac{2\abs{E(D_+,D_-)}+\abs{E(D_0,D_+\cup D_-)}-d_i^-+d_i^++d_i^0}{2\vol(D_-)+\vol(D_0)+d_i}\leq f(\vec 1_{D_-,(D_-)^c})\\
 				&=\mu=\frac{2\abs{E(D_+,D_-)}+\abs{E(D_0,D_+\cup D_-)}}{\vol(V)+\vol(D_0)}.
 			\end{aligned}
 		\end{equation}
If $2\vol(D_-)+\vol(D_0)+d_i<\vol(V)+\vol(D_0)$, we have $f(\vec u)>\mu$, a contradiction. So, if \eqref{eq:ach-mumax-4} holds, we have:
 		\begin{equation}
 			\label{eq:ach-mumax-4-1}
 			-d_i^-+d_i^++d_i^0\leq\mu(-\vol(V)+2\vol(D_-)+d_i).
 		\end{equation}
By $v_i\in d_i\sgn(x_i-c_x)$ and the first equality in \eqref{eigen-anticheeger}, we get an upper bound of $v_i$:
 		\begin{equation*}
 			\begin{aligned}
 				v_i&=-\sum\limits_{j\in D_0\backslash\{i\}}v_j+\vol(D_-)-\vol(D_+)\geq\max\{-d_i,d_i-\vol(D_0)+\vol(D_-)-\vol(D_+)\}\\
 				&=-\vol(D_0)+d_i+\vol(D_-)-\vol(D_+),
 			\end{aligned}
 		\end{equation*}
 		where the last equality is due to $-1\in\median(\vec u)$. By the 2nd equality in \eqref{eigen-anticheeger}, there holds
 		\begin{equation*}
 			d_i^+-d_i^-=\sum\limits_{j\in D_0,\{i,j\}\in E} z_{ij}+\mu v_i\geq -d_i^0+\mu(-\vol(D_0)+d_i+\vol(D_-)-\vol(D_+)).
 		\end{equation*}
 Combining it with \eqref{eq:ach-mumax-4-1}, we obtain 
 		\begin{equation}
 			\label{eq:ach-mumax-4-2}
 			\mu d_i=-d_i^-+d_i^++d_i^0+\mu(\vol(V)-2\vol(D_-)),\Rightarrow z_{ij}=-1,\,\forall\,j\in D_0,\{i,j\}\in E,
 		\end{equation}
 		\begin{equation}
 			\label{eq:ach-mumax-4-3}
 			v_i=-\vol(D_0)+d_i+\vol(D_-)-\vol(D_+),\quad v_j=-d_j,\,\forall j\in D_0\backslash\{i\}.
 		\end{equation}

Now we prove that $G_0$ has no edge. Suppose the contrary, that there exist  $i,j\in D_0$ such that $\{i,j\}\in E$. If $i$ and $j$ are ``positive good" nodes, the by \eqref{eq:ach-mumax-3-2}, $z_{ij}=z_{ji}=1$, which contradicts to $z_{ij}=-z_{ij}$. If $i$ and  $j$ are ``negative good" nodes, then by \eqref{eq:ach-mumax-4-2}, $z_{ij}=z_{ji}=-1$, which is a contradiction to $z_{ij}=-z_{ij}$. If $i$ and $j$ are ``positive good" and ``negative good" nodes, respectively. Without loss of generality, we may assume that 
 $i$ is ``positive good", and $j$ is ``negative good". Then, by \eqref{eq:ach-mumax-3-3} and \eqref{eq:ach-mumax-4-3}, we have
 		\begin{equation*}
 			\begin{aligned}
 				v_i&=\vol(D_0)-d_i+\vol(D_-)-\vol(D_+)=-d_i,\quad&\Rightarrow \vol(D_0)=\vol(D_+)-\vol(D_-),\\
 				v_j&=d_j-\vol(D_0)+\vol(D_-)-\vol(D_+)=d_j,\quad&\Rightarrow \vol(D_0)=\vol(D_-)-\vol(D_+),
 			\end{aligned}
 		\end{equation*}
 		which deduces $\vol(D_0)=0$, a contradiction.
 	\end{enumerate}
 \end{proof}

 \begin{cor}
  	\label{cor:ach-mumax-1}
Suppose $G=(V,E)$ has no isolated node, and let $\mu$ be the largest eigenvalue, with $\vec x$  being its corresponding eigenvector. Let $c_x\in\median(\vec x)$. If $|c_x|<\norm{\vec x}$, then $|D_0|\leq 1$.
 \end{cor}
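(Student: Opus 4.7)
The plan is to argue by contradiction, pushing the explicit subgradient identities \eqref{eq:ach-mumax-3-3} and \eqref{eq:ach-mumax-4-3} derived inside the proof of Theorem~\ref{thm:ach-single}~(3) through the balance relation. Suppose $|D_0|\geq 2$ and pick any $i\in D_0$. By the dichotomy on the sign of $d_i^+-d_i^-$, the vertex $i$ is either a positive good node ($d_i^-\geq d_i^+$) or a negative good node ($d_i^+>d_i^-$), so one of the two identities applies.

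In the positive good case, \eqref{eq:ach-mumax-3-3} pins down the particular $\vec v\in\partial N(\vec x)$ realizing the eigenequation \eqref{eigen-anticheeger} as
\[
v_i=\vol(D_0)-d_i+\vol(D_-)-\vol(D_+),\qquad v_j=d_j\ \text{ for all }j\in D_0\setminus\{i\}.
\]
In the regime $|c_x|<\norm{\vec x}$ the sign of $x_k-c_x$ is also definite on $D_\pm$, forcing $v_k=d_k$ on $D_+$ and $v_k=-d_k$ on $D_-$. Substituting all these values into the first equality of \eqref{eigen-anticheeger}, $\sum_{k\in V}v_k=0$, and simplifying collapses to
\[
2\bigl(\vol(D_0)-d_i\bigr)=0,
\]
so $\vol(D_0)=d_i$. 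Because $G$ has no isolated vertex, $d_j\geq 1$ for every $j\in V$, whence $\vol(D_0)=d_i$ is incompatible with $|D_0|\geq 2$ and forces $D_0=\{i\}$. The negative good case is treated by the symmetric bookkeeping with \eqref{eq:ach-mumax-4-3} in place of \eqref{eq:ach-mumax-3-3}, yielding the same identity $\vol(D_0)=d_i$ and the same contradiction.

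The proof is essentially an algebraic cancellation once the identities of Theorem~\ref{thm:ach-single}~(3) are in hand, so the only point that warrants explicit checking is that \eqref{eq:ach-mumax-3-3} and \eqref{eq:ach-mumax-4-3} apply for an arbitrary $i\in D_0$ of the relevant type, not merely for one distinguished vertex. This is automatic from the derivation in the theorem, whose argument (movement of a single vertex of $D_0$ into $D_+$ or $D_-$ followed by the maximality of $\mu$) does not single out any particular $i$. The no-isolated-vertex hypothesis is what translates the numerical equality $\vol(D_0)=d_i$ into the cardinality bound $|D_0|\leq 1$.
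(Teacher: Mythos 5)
Your proof is correct and follows essentially the same route as the paper's: both feed the explicit subgradient values supplied by \eqref{eq:ach-mumax-3-3} and \eqref{eq:ach-mumax-4-3} from the proof of Theorem~\ref{thm:ach-single}(3) into the balance equation $\sum_{k\in V}v_k=0$ and extract $\vol(D_0)=d_i$, which is incompatible with $|D_0|\geq 2$ when every degree is positive. The only difference is cosmetic: the paper first argues that all vertices of $D_0$ are of the same type and then equates the two expressions for each $v_i$, whereas you observe that a single vertex's identity already pins down $\vec v$ on all of $D_0$, which shortens the bookkeeping slightly.
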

 
 \begin{proof}
This property is a corollary of the 3rd case of Theorem~\ref{thm:ach-single}. Suppose the contrary: if $|c_x|<\norm{\vec x}$ and  $|D_0|\geq 2$, then by the last part of the proof of  Theorem~\ref{thm:ach-single} (3),  the nodes in $D_0$ are all ``positive good" (or all ``negative good''). If they are all ``positive good'', then \eqref{eq:ach-mumax-3-3} yields
   	\begin{equation*}
   			v_i=d_i=\vol(D_0)-d_i+\vol(D_-)-\vol(D_+),\quad\forall\, i\in D_0.
   	\end{equation*}
   	Combing the 1st equality in \eqref{eigen-anticheeger}, we derive
   	\begin{equation}
   		\label{eq:ach-mumax-small-1}
   		\vol(D_-)-\vol(D_+)=\sum\limits_{i\in D_0}v_i=\vol(D_0)\quad \Rightarrow\quad d_i=\vol(D_0),
   	\end{equation}
which implies $|D_0|=1$, a contradiction. If they are all ``negative good'', then by \eqref{eq:ach-mumax-4-3}, 
\begin{equation*}
	v_i=-d_i=-\vol(D_0)+d_i+\vol(D_-)-\vol(D_+),\quad\forall\, i\in D_0,
\end{equation*}
Combing the 1st equality in \eqref{eigen-anticheeger}, there holds
\begin{equation}
	\label{eq:ach-mumax-small-2}
	\vol(D_-)-\vol(D_+)=\sum\limits_{i\in D_0}v_i=-\vol(D_0)\quad \Rightarrow\quad d_i=\vol(D_0),
\end{equation}
meaning that $|D_0|=1$, a contradiction. The proof is then completed.
 \end{proof}

%

\vspace{0.3cm}

\textbf{Acknowledgement}. 
This research was supported by the National Key R~\&~D Program of China (No.~2022YFA1005102) and the National Natural Science Foundation of China (Nos.~ 12401443, 12325112, 12288101).

\bibliographystyle{amsplain}

\end{document}